\numberwithin{equation}{section}
\newtheorem{Thm}{Theorem}[section]
\newtheorem{Prop}[Thm]{Proposition}
\newtheorem{Lem}[Thm]{Lemma}
\theoremstyle{definition}
\newtheorem{Rem}[Thm]{Remark}
\newtheorem{Expl}[Thm]{Example}
\newcommand{\R}{\mathbb{R}}
\newcommand{\Z}{\mathbb{Z}}
\newcommand{\cA}{\mathscr{A}}
\newcommand{\cC}{\mathscr{C}}
\newcommand{\cF}{\mathscr{F}}
\newcommand{\cP}{\mathscr{P}}
\newcommand{\D}[1]{\Delta(#1)}
\newcommand{\Dl}[1]{\Delta_1(#1)}
\newcommand{\E}[1]{{\rm E}(#1)}
\newcommand{\e}{{\rm e}}
\newcommand{\Ex}[1]{{\rm E}'(#1)}
\newcommand{\bw}{I}
\newcommand{\co}{C}
\newcommand{\CAT}{\text{\rm CAT}}
\newcommand{\del}{\delta}
\newcommand{\diam}{\operatorname{diam}}
\newcommand{\eps}{\varepsilon}
\newcommand{\es}{\emptyset}
\newcommand{\Fix}{\operatorname{Fix}}
\newcommand{\Gam}{\Gamma}
\newcommand{\gam}{\gamma}
\newcommand{\Lam}{\Lambda}
\newcommand{\lam}{\lambda}
\renewcommand{\mid}{\,|\,}
\renewcommand{\H}{\text{\rm H}}
\newcommand{\id}{\operatorname{id}}
\newcommand{\Isom}{\operatorname{Isom}}
\newcommand{\la}{\langle}
\newcommand{\ra}{\rangle}
\newcommand{\Lip}{\operatorname{Lip}}
\newcommand{\ol}{\overline}
\renewcommand{\rho}{\varrho}
\newcommand{\rk}{\operatorname{rk}}
\newcommand{\sig}{\sigma}
\newcommand{\Sig}{\Sigma}
\newcommand{\sm}{\setminus}
\newcommand{\sub}{\subset}
\newcommand{\vc}[1]{[#1]}
\title{Injective hulls of certain discrete metric spaces\\and groups}
\author{Urs Lang\footnote{Department of Mathematics, ETH Z\"urich, 
8092 Z\"urich, Switzerland / {\tt lang@math.ethz.ch}}}
\date{July 29, 2011; revised, June 28, 2012}
\begin{document}

 
\maketitle

\begin{abstract}
Injective metric spaces, or absolute 1-Lipschitz retracts, share a number 
of properties with $\CAT(0)$ spaces. In the 1960es, J.~R.~Isbell showed that 
every metric space~$X$ has an injective hull~$\E{X}$. 
Here it is proved that if~$X$ is the vertex set of a connected locally 
finite graph with a uniform stability property of intervals, then~$\E{X}$ 
is a locally finite polyhedral complex with finitely many isometry types 
of $n$-cells, isometric to polytopes in $l^n_\infty$, for each~$n$.
This applies to a class of finitely generated groups~$\Gam$, including
all word hyperbolic groups and abelian groups, among others. 
Then~$\Gam$ acts properly on~$\E{\Gam}$ by cellular isometries, 
and the first barycentric subdivision of~$\E{\Gam}$ is a model 
for the classifying space~$\underbar{\rm E}\Gam$ for proper actions. 
If~$\Gam$ is hyperbolic, $\E{\Gam}$ is finite dimensional 
and the action is cocompact. In particular, every hyperbolic group 
acts properly and cocompactly on a space of non-positive
curvature in a weak (but non-coarse) sense. 
\end{abstract}
 

\section{Introduction}

A metric space $Y$ is called {\em injective} if for every metric space~$B$ 
and every $1$-Lipschitz map $f \colon A \to Y$ defined on a set $A \sub B$ 
there exists a $1$-Lipschitz extension $\ol f \colon B \to Y$ of $f$. 
The terminology is in accordance with the notion of an injective object 
in category theory. 
Basic examples of injective metric spaces are the real line, 
all complete $\R$-trees, and $l_\infty(I)$ for an arbitrary index set $I$. 
Every injective metric space $Y$ is complete, geodesic, and satisfies 
Busemann's non-positive curvature condition in a restricted form 
(see~\eqref{eq:gamxy} below); in particular, $Y$ is contractible.
By an old construction of Isbell~\cite{Isb},
every metric space $X$ possesses an essentially unique 
{\em injective hull} $(\e,\E{X})$; 
that is, $\E{X}$ is an injective metric space, $\e \colon X \to \E{X}$ 
is an isometric embedding, and every isometric embedding of $X$ into some 
injective metric space $Z$ factors through $\e$. 
If $X$ is compact then so is $\E{X}$, and if $X$ is finite then 
the injective hull is a finite polyhedral complex of dimension at 
most $\frac12 |X|$ whose $n$-cells are isometric to polytopes 
in~$l^n_\infty = l_\infty(\{1,\dots,n\})$.
A detailed account of injective metric spaces and hulls is given 
below, in Sections~\ref{Sect:inj} and~\ref{Sect:hull}.

Isbell's construction was rediscovered twenty years later by Dress~\cite{Dre}
(and even another time in~\cite{ChrL}).
Due to this independent work and a characterization of injective 
metric spaces from~\cite{AroP}, metric injective hulls 
are also called {\em tight spans}
or {\em hyperconvex hulls} in the literature,
furthermore ``hull'' is often substituted by ``envelope''. 
Tight spans are widely known in discrete mathematics 
and have notably been used in phylogenetic analysis 
(see~\cite{DreMT,DreHM} for some surveys). Apart from the two foundational 
papers~\cite{Isb,Dre} and some work referring to Banach spaces 
(see, for instance,~\cite{Isb2,Rao,CiaD}), the vast literature on metric 
injective hulls deals almost exclusively with finite metric spaces. 
Dress proved that for certain discrete metric spaces $X$ the tight span 
$T_X$ still has a polyhedral structure~\cite[(5.19), (6.2), (6.6)]{Dre}; 
these results, however, presuppose that $T_X$ is locally finite dimensional.
A simple sufficient, geometric condition on $X$ to this effect has been
missing (but see~\cite[Theorem~9 and~(5.14)]{Dre}). 

Here it is now shown that, in the case of integer valued metrics,  
a weak form of the fellow traveler property for discrete geodesics 
serves the purpose and even ensures that $\E{X}$ is proper, provided $X$ is;
see Theorem~\ref{Thm:intro-ex} below. 
The polyhedral structure of $\E{X}$ and the possible isometry types of cells 
are described in detail and no prior knowledge of the constructions 
in~\cite{Isb, Dre} is assumed.
With regard to applications in geometric group theory,
a general fixed point theorem for injective metric spaces is pointed out 
(Proposition~\ref{Prop:intro-fix}), which closely parallels the well-known 
result for $\CAT(0)$ spaces.
Furthermore, it has been known for some time that if the metric space $X$ is 
$\del$-hyperbolic, then so is $\E{X}$, and this implies that $\E{X}$ is within 
finite distance of $\e(X)$, provided $X$ is geodesic or discretely geodesic 
(Proposition~\ref{Prop:intro-hyp}). Despite this fact, the injective
hull of the hyperbolic plane has infinite topological dimension. Yet, it is 
shown that for a word hyperbolic group~$\Gam$ the injective hull 
is a finite dimensional polyhedral complex, on which $\Gam$ acts properly 
and cocompactly. This is part of a more general result, 
Theorem~\ref{Thm:intro-groups}, which provides a new source for geometric 
models of finitely generated groups and universal spaces for proper actions. 

To state these results in detail, we introduce some 
general notation used throughout the paper. Let $X$ be a
metric space with metric $d$. For $x,y \in X$,
\[
\bw(x,y) := \{ v \in X : d(x,v) + d(v,y) = d(x,y) \}
\]
denotes the {\em interval} between $x$ and $y$ (compare~\cite{Mul}), 
and for $x,v \in X$,
\begin{equation} \label{eq:c}
\co(x,v) := \{ y \in X : v \in \bw(x,y) \}
\end{equation}
is the {\em cone} determined by the directed pair $(x,v)$. 
Given a reference point $z \in X$, $d_z \colon X \to \R$ denotes the 
distance function to $z$, thus $d_z(x) = d(x,z)$.
The metric space $X$ is called {\em discretely geodesic} if the metric
is integer valued and for every pair of points $x,y \in X$ there 
exists an isometric embedding $\gam \colon \{0,1,\dots,d(x,y)\} \to X$ 
such that $\gam(0) = x$ and $\gam(d(x,y)) = y$. 
We say that a discretely geodesic metric space $X$ has 
{\em $\beta$-stable intervals}, for some constant $\beta \ge 0$, 
if for every triple of points $x,y,y' \in X$ with $d(y,y') = 1$ we have
\begin{equation} \label{eq:stable}
d_\H(\bw(x,y),\bw(x,y')) \le \beta,
\end{equation} 
where $d_\H$ denotes the Hausdorff distance in $X$. To verify this condition
it suffices, by symmetry, to show that for every $v \in \bw(x,y)$ there exists
a $v' \in \bw(x,y')$ with $d(v,v') \le \beta$; this means that
some (but not necessarily every) discrete geodesic from~$x$ to~$y'$ 
passes close to $v$.
We have the following result. 

\begin{Thm} \label{Thm:intro-ex}
Let $X$ be a discretely geodesic metric space such that all bounded 
subsets are finite. If $X$ has $\beta$-stable intervals, then the 
injective hull $\E{X}$ is proper (that is, bounded closed subsets are 
compact) and has the structure of a locally finite polyhedral 
complex with only finitely many isometry types of $n$-cells, isometric 
to injective polytopes in $l^n_\infty$, for every $n \ge 1$.
\end{Thm}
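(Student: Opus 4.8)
The plan is to use the explicit model of the injective hull recalled in Sections~\ref{Sect:inj} and~\ref{Sect:hull}: realize $\E{X}$ as the set of \emph{extremal} functions in $\D{X} = \{f \colon X \to \R : f(x)+f(y) \ge d(x,y) \text{ for all } x,y \in X\}$, namely those $f$ that are pointwise minimal in $\D{X}$, equivalently satisfy $f(x) = \sup_{y}\bigl(d(x,y)-f(y)\bigr)$ for every $x$, equipped with the supremum metric and with $\e(x) = d_x$. Every such $f$ is $1$-Lipschitz and nonnegative. To each $f$ I associate its set of tight pairs $A_f = \{(x,y) : f(x)+f(y)=d(x,y)\}$; a preliminary step is to check, using the hypotheses, that each $x$ admits a tight partner, so that the slices of $A_f$ are nonempty. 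The graph $A_f$ is the combinatorial datum indexing the cells.

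The polyhedral structure then comes from analysing admissible perturbations $f \mapsto f+g$. Staying in the cell through $f$ amounts to keeping the tight pairs tight, i.e.\ $g(x)+g(y)=0$ for $(x,y)\in A_f$ (the \emph{balanced} directions), while the finitely many locally relevant non-tight inequalities, positive by local finiteness of~$X$, persist under small perturbations. Thus the local model of $\E{X}$ at $f$ is the affine polyhedron cut out by the linear relations coming from $A_f$. I would then check that the supremum metric restricts on such a cell to the metric of a polytope in $l^n_\infty$, with $n$ the number of independent balanced directions, and that this polytope is injective. Everything therefore reduces to bounding, uniformly in $f$, the dimension $n$ and the combinatorial complexity of $A_f$.

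This is where the two hypotheses enter. Finiteness of bounded subsets of $X$ gives local finiteness of the tight graph and drives an Arzel\`a--Ascoli/diagonal argument over an exhaustion of $X$ by finite balls: a bounded family in $\E{X}$ is uniformly bounded and $1$-Lipschitz, hence subconverges uniformly on every ball. For properness I must upgrade this to convergence in the supremum metric and see that the limit is extremal; this follows once one knows that the ``active'' part of $f$ is concentrated, so that values far from a basepoint are determined by nearby values. The $\beta$-stable intervals condition~\eqref{eq:stable} supplies exactly this: it bounds by $\beta$ the Hausdorff distance between the intervals $\bw(x,y)$ and $\bw(x,y')$ when $y,y'$ are neighbours, and hence controls how the tight structure of $f$ shifts as the far endpoint of a tight pair is moved one step along a discrete geodesic. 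Iterating, every effective constraint can be traced back to tight pairs whose endpoints lie in a neighbourhood of radius controlled by $\beta$.

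The crux --- and the step I expect to be hardest --- is to convert this qualitative concentration into the two quantitative claims: a bound on the local dimension $n$ depending only on $\beta$, and local finiteness of the complex. Both should follow by iterating~\eqref{eq:stable} along discrete geodesics to show that the balanced directions, and hence the cells, are determined by a uniformly bounded finite amount of integer-metric data near their centre. Once this uniform bound is in place, finitely many isometry types of $n$-cells follows because there are only finitely many integer-metric configurations of bounded size, and local finiteness follows because only finitely many cells can meet a given ball.
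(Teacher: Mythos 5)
Your architecture matches the paper's: the extremal-function model, the tight-pair graph $A(f)$ indexing cells, cells cut out by the linear relations $g(x)+g(y)=d(x,y)$, and the reduction of everything to a uniform local bound on the rank. But there are two genuine gaps. The first is your ``preliminary step'' that every $x$ admits a tight partner: for infinite $X$ this is false in general, since an extremal function only satisfies $f(x)=\sup_{y}(d(x,y)-f(y))$ and the supremum need not be attained. The paper has to work with the subset $\Ex{X}$ of functions whose tight graph covers $X$, prove it is dense (Proposition~\ref{Prop:e-dense}), and only \emph{after} establishing the local rank bound conclude that $\Ex{X}=\E{X}$ (Theorem~\ref{Thm:poly}(1), using that bounded-rank functions take values in $\Z+R$ for a finite set $R$, so the supremum in~\eqref{eq:extr-sup} is attained). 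Your order of quantifiers is circular here: the nonemptiness of the slices of $A_f$ is a consequence of the dimension bound, not an input to it.

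The second and more serious gap is that the step you yourself flag as the crux --- converting $\beta$-stability into a bound on the local dimension and complexity --- is exactly what you do not supply; ``iterating~\eqref{eq:stable} along discrete geodesics'' is not yet an argument. The paper's mechanism has three distinct ingredients: (i) the rank of $A(f)$ equals the number of \emph{even components} of the tight graph, so that is the quantity to bound; (ii) for each tight pair $\{x,y\}$ of an $f$ with $f(z)\le\alpha$, the Gromov product estimate $(x\mid y)_z\le f(z)$ combined with Lemma~\ref{Lem:z-beta} produces a witness $v_{xy}\in\bw(x,y)$ lying in the fixed finite ball $B(z,2\alpha\beta)$; (iii) Lemma~\ref{Lem:xyxy}(4) shows that two tight pairs sharing the same pointed cone $(v,\co(x,v))$ have their endpoints joined by an $A(f)$-path of length~$2$, hence lie in the same even component, while Cannon's cone-type argument (Lemma~\ref{Lem:cone-type}) shows there are only finitely many pointed cones based in a finite ball. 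Together these inject the even components into a finite set of cone data and give the rank bound (Proposition~\ref{Prop:loc-finite}); a refinement of the same injection bounds the number of edges at each vertex and yields local finiteness. None of (i)--(iii) appears in your sketch. Finally, your properness argument via Arzel\`a--Ascoli needs an unproved upgrade from local uniform convergence to sup-norm convergence of extremal functions; the paper sidesteps this entirely by deducing local compactness from local finiteness of the complex and then applying the Hopf--Rinow theorem to the complete geodesic space $\E{X}$.
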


In this article, polytopes are understood to be convex and compact.
The polyhedral (or rather polytopal) structure 
of $\E{X}$ is discussed in detail, under some weaker but technical 
assumption, in Section~\ref{Sect:poly}. 
Then, in Section~\ref{Sect:cones}, this condition is approached through
the uniform stability of intervals in $X$. The proof of the theorem is
completed in Section~\ref{Sect:proofs}, where also the other results 
stated in this introduction are proved.
Inequality~\eqref{eq:stable} is used through the following two 
consequences. First, for every fixed vertex $v \in X$ there are only 
finitely many distinct cones $\co(x,v)$ as $x$ ranges over $X$; 
the argument goes back to Cannon~\cite{Can}.
Second, for all $x,y,z \in X$ there exists 
$v \in \bw(x,y)$ such that $d_z(v) \le \beta(d_z(x) + d_z(y) - d(x,y))$ 
(this implies in turn that $X$ has $2\beta$-stable intervals).
We derive upper bounds on the local dimension and 
complexity of $\E{X}$ in terms of the distance to a point $\e(z)$ in the 
image of the embedding $\e \colon X \to \E{X}$,
the cardinality of balls centered at $z$, and the constant $\beta$.
In particular, if there is a uniform bound on the number of points at 
distance one from any point in $X$, then every subcomplex of $\E{X}$ 
contained in a tubular neighborhood of $\e(X)$ is finite dimensional.
Note that this applies to finitely generated groups, discussed further below.

Injective (or hyperconvex) metric spaces have some remarkable 
fixed point properties. 
For instance, every $1$-Lipschitz map $L \colon X \to X$ 
of a bounded injective metric space $X$ has a non-empty fixed point set 
which is itself injective and thus contractible; 
compare~\cite[Theorem~6.1]{EspK} and the references there.
Contrary to what one might expect, the boundedness condition cannot be 
relaxed to the assumption that (the semigroup generated by) the $1$-Lipschitz 
map $L$ has bounded orbits. Indeed Prus gave an example
of an isometric embedding $L$ of the Banach space $l_\infty$ into itself 
such that $L$ has bounded orbits but no fixed point; 
see~\cite[Remark~6.3]{EspK}. However, this map $L$ is not surjective and 
thus the example still leaves room for the following proposition. 
In the process of finishing this paper I became aware of the 
reference~\cite{Dre3}, where the result is shown for continuous isometric 
actions of compact groups, using the Haar integral.

\begin{Prop} \label{Prop:intro-fix}
Let $X$ be an injective metric space. If $\Lam$ is a subgroup
of the isometry group of $X$ with bounded orbits,
then the fixed point set of $\Lam$ is non-empty and furthermore 
injective, hence contractible.
\end{Prop}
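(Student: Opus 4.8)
The plan is to pass to the hyperconvexity formulation of injectivity (\cite{AroP}) and to run a circumcenter argument, handling the non-emptiness and the injectivity of $\Fix(\Lam)$ in parallel. I take two facts as the technical engine. First, an intersection of closed balls of $X$ (an \emph{admissible} set) is itself injective, hence hyperconvex. Second, in any bounded hyperconvex space $A$ the Chebyshev radius $r(A):=\inf_{x\in A}\sup_{y\in A} d(x,y)$ obeys $r(A)\le\tfrac12\diam(A)$ and is attained, so that the Chebyshev center $C(A):=\{x\in A:\sup_{y\in A} d(x,y)=r(A)\}$ is non-empty; moreover $C(A)$ is admissible and $\diam(C(A))\le r(A)\le\tfrac12\diam(A)$. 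The diameter bound is immediate, since $x,x'\in C(A)\sub A$ give $d(x,x')\le\sup_{y\in A} d(x,y)=r(A)$, and the estimate $r(A)\le\tfrac12\diam(A)$ follows by applying hyperconvexity of $A$ to the balls $\ol B(y,\tfrac12\diam(A))$, $y\in A$, which meet pairwise because $d(y,y')\le\diam(A)$. The genuinely non-trivial input, and the step I expect to be the main obstacle, is that the Chebyshev radius is actually attained (so $C(A)\ne\es$); this rests on the standard intersection property that a decreasing family of non-empty admissible subsets of a hyperconvex space has non-empty intersection.

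For the non-emptiness of $\Fix(\Lam)$ I would fix $x_0\in X$ and set $O:=\Lam x_0$, a bounded set since $\Lam$ has bounded orbits. Let $A_0:=\bigcap\{\ol B(x,s): O\sub \ol B(x,s)\}$ be the cover of $O$; it is non-empty, bounded, admissible, and $\Lam$-invariant, because every $g\in\Lam$ permutes $O$ and carries closed balls to closed balls, so $gA_0=A_0$. I then iterate, putting $A_{n+1}:=C(A_n)$. Each $A_n$ is non-empty, bounded, admissible and $\Lam$-invariant (the center is canonically attached to $A_n$, so an isometry preserving $A_n$ preserves $C(A_n)$), and $\diam(A_{n+1})\le\tfrac12\diam(A_n)$, whence $\diam(A_n)\to 0$. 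Choosing $p_n\in A_n$ yields a Cauchy sequence, since $p_m\in A_n$ for $m\ge n$ forces $d(p_n,p_m)\le\diam(A_n)$; by completeness of $X$ it converges to a point $p$ lying in every (closed) $A_n$. Thus $\bigcap_n A_n=\{p\}$ is a single $\Lam$-invariant point, so $gp=p$ for all $g\in\Lam$, i.e. $p\in\Fix(\Lam)$. Here the shrinking diameters let completeness replace the general intersection property for this outer limit.

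For the injectivity of $\Fix(\Lam)$ I would verify the hyperconvexity condition directly. Let $(y_i)_{i\in \bw}$ be points of $\Fix(\Lam)$ and $(r_i)_{i\in \bw}$ radii with $d(y_i,y_j)\le r_i+r_j$ for all $i,j$. Hyperconvexity of $X$ gives $B:=\bigcap_{i} \ol B(y_i,r_i)\ne\es$; this $B$ is bounded and admissible, and it is $\Lam$-invariant because each $y_i$ is fixed by $\Lam$, so $g\ol B(y_i,r_i)=\ol B(gy_i,r_i)=\ol B(y_i,r_i)$ and hence $gB=B$. Starting the iterated-center construction of the previous paragraph from $A_0:=B$ produces, exactly as above, a single $\Lam$-fixed point $z\in B$. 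Then $z\in\Fix(\Lam)$ and $d(z,y_i)\le r_i$ for every $i$, so the corresponding balls in $\Fix(\Lam)$ have a common point. By the hyperconvexity characterization of injectivity (\cite{AroP}) the space $\Fix(\Lam)$ is therefore injective, and injective spaces are contractible, which completes the proof.
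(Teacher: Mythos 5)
Your proof is correct, but the route to the fixed point is genuinely different from the paper's. For non-emptiness, the paper works on the orbit space: it defines $D(\Lam x,\Lam y):=\sup\{d(x',y'): x'\in\Lam x,\, y'\in\Lam y\}$, uses Zorn's Lemma to pick a minimal element of the associated set $\D{\Lam\!\setminus\!X,D}$, and checks that the induced function on $X$ is extremal; since on an injective space the only extremal functions are the distance functions $d_z$, the resulting $z$ is fixed. You instead run a Bruhat--Tits-style circumcenter iteration on admissible sets, which is a valid and arguably more geometric alternative; the paper's version has the merit of proving a stronger intermediate fact (for \emph{any} metric space with a bounded-orbit isometry group there is an extremal function constant on orbits, i.e.\ a fixed point in $\E{X}$), while yours avoids Zorn's Lemma in the main construction. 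Concerning the step you flag as the main obstacle --- attainment of the Chebyshev radius --- note that for a decreasing family of non-empty \emph{admissible} sets the intersection is non-empty by a direct appeal to hyperconvexity of $X$: any two balls occurring in the family both contain the smaller non-empty member, so their centers satisfy the required distance condition; no appeal to Baillon's theorem for general hyperconvex subsets is needed. (Alternatively you could sidestep attainment entirely by iterating the admissible sets $\{x\in A_n:\sup_{y\in A_n}d(x,y)\le \tfrac34\diam(A_n)\}$, which are canonically defined, hence still $\Lam$-invariant, and shrink geometrically.) Your second half --- intersecting the balls $\ol B(y_i,r_i)$ centered at fixed points, observing that the resulting set is bounded, admissible and $\Lam$-invariant, and extracting a fixed point in it --- is essentially identical to the paper's argument, except that the paper simply re-applies its first part to the restricted action on $Y=\bigcap_i B(x_i,r_i)$ rather than rerunning the center construction; do record that the empty-family case of the hyperconvexity condition is exactly the non-emptiness of $\Fix(\Lam)$ established first.
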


This should be compared with the analogous result for 
$\CAT(0)$ spaces (see~\cite[Corollary~II.2.8]{BriH}).

A metric space $X$ is called {\em $\del$-hyperbolic}, 
for some constant $\del \ge 0$, if 
\begin{equation} \label{eq:hyp}
d(w,x) + d(y,z) \le \max\{d(w,y) + d(x,z), d(x,y) + d(w,z)\} + \del
\end{equation}
for all quadruples of points $w,x,y,z \in X$. 
It is easily seen that every discretely
geodesic $\del$-hyperbolic metric space has $(\del+1)$-stable intervals.
The following proposition provides a most efficient way to embed a general 
$\del$-hyperbolic metric space isometrically into a geodesic 
(and contractible) $\del$-hyperbolic metric space
(see~\cite[Proposition~6.4.D]{Gro}, \cite[Theorem~4.1]{BonS} for some
results of similar type).

\begin{Prop} \label{Prop:intro-hyp}
Let $X$ be a $\del$-hyperbolic metric space. Then $\E{X}$ is 
$\del$-hyperbolic as well. If, in addition, $X$ is geodesic or discretely
geodesic, then $\E{X}$ is within distance $\del$ or $\del + \frac12$, 
respectively, of the image of the embedding $\e \colon X \to \E{X}$.
\end{Prop}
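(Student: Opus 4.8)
The plan is to work with the description of the injective hull recalled in Section~\ref{Sect:hull}, in which $\E{X}$ is realized as the set of extremal functions $f\colon X\to\R$ (those with $f(x)=\sup_{y}(d(x,y)-f(y))$ for all $x$) under the supremum distance, with $\e$ sending $z$ to the distance function $d_z$. Two elementary facts will drive everything. First, for extremal $f,g$ one has the symmetric formula
\[
d(f,g)=\sup_{x,y\in X}\bigl(d(x,y)-f(x)-g(y)\bigr);
\]
indeed, fixing $x$ and taking the inner supremum over $y$ gives $g(x)-f(x)$, while fixing $y$ and taking it over $x$ gives $f(y)-g(y)$, so the two one-sided suprema of $f-g$ agree and equal $\|f-g\|_\infty$. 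Second, specializing $g=d_z$ collapses this to $d(f,\e(z))=f(z)$, whence $d\bigl(f,\e(X)\bigr)=\inf_{x\in X}f(x)$.

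For the first assertion I would verify directly that $\E{X}$ satisfies \eqref{eq:hyp}, using only the distance formula. Given $f_0,f_1,f_2,f_3\in\E{X}$ and $\eps>0$, choose near-optimal pairs $(x,y)$ and $(x',y')$ with $d(f_0,f_1)\le d(x,y)-f_0(x)-f_1(y)+\eps$ and $d(f_2,f_3)\le d(x',y')-f_2(x')-f_3(y')+\eps$. Adding these and applying \eqref{eq:hyp} in $X$ to the quadruple $x,y,x',y'$ produces a maximum of two quantities; by the distance formula the first regroups into $d(f_0,f_2)+d(f_1,f_3)$ and the second into $d(f_0,f_3)+d(f_1,f_2)$. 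Letting $\eps\to0$ yields $\del$-hyperbolicity of $\E{X}$.

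For the second assertion it suffices, by the identity $d(f,\e(X))=\inf_{x}f(x)$, to produce for each extremal $f$ a point of small value. Fix any $x_0\in X$; by extremality choose an almost-tight partner $y_0$ with $f(x_0)+f(y_0)\le d(x_0,y_0)+\eps$ (so $0\le f(x_0)\le d(x_0,y_0)+\eps$). On a geodesic from $x_0$ to $y_0$ take the point $m_0$ at distance $f(x_0)$ from $x_0$ — in the discretely geodesic case the nearest admissible integer distance, off by at most $\tfrac12$ — and let $q_0$ be an almost-tight partner of $m_0$. Applying \eqref{eq:hyp} to $x_0,y_0,m_0,q_0$, the left side is bounded below by $f(x_0)+f(y_0)+f(m_0)+f(q_0)$ up to $2\eps$ using near-tightness of the two pairs, while the right side is bounded above by $f(x_0)+f(y_0)+f(q_0)+\del$ using $f\in\D{X}$ together with $d(x_0,m_0)=f(x_0)$ and $d(m_0,y_0)=d(x_0,y_0)-f(x_0)$. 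The common terms cancel and leave $f(m_0)\le\del$ (respectively $f(m_0)\le\del+\tfrac12$, the $\tfrac12$ coming from the rounding), so $\inf_{X}f\le\del$ (resp.\ $\del+\tfrac12$).

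The computations are short; the one genuine design choice — and the step I expect to require the most care — is the placement of $m_0$ at distance exactly $f(x_0)$ along the geodesic, since this is precisely what makes the $f(x_0)$, $f(y_0)$, and $f(q_0)$ contributions cancel in \eqref{eq:hyp} and isolate $f(m_0)\le\del$; choosing $m_0$ anywhere else (e.g.\ the midpoint) leaves a term proportional to $d(x_0,y_0)$ and fails. In the discretely geodesic case the unavoidable rounding of $f(x_0)$ to an integer distance is exactly what produces the extra $\tfrac12$, and the minor edge case where $f(x_0)$ slightly exceeds $d(x_0,y_0)$ is handled by taking $m_0=y_0$.
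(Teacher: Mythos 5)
Your proof is correct. The first half is essentially the paper's argument: the paper derives the same identity $\|f-g\|_\infty=\sup_{x,y\in X}\bigl(d(x,y)-f(x)-g(y)\bigr)$ inline, via a two-case choice of near-optimal points, and then applies~\eqref{eq:hyp} to the four chosen points of $X$ and regroups exactly as you do; stating the symmetric formula as a clean preliminary lemma is only a cosmetic difference. The second half takes a genuinely different (though closely related) route. The paper chooses the near-tight pair $x,y$, a point $v\in \bw(x,y)$ with $d(v,x)\le f(x)+\nu$ and $d(v,y)\le f(y)+\nu$ (where $\nu=0$ or $\tfrac12$), and then invokes the $\del$-hyperbolicity of $\E{X}$ \emph{itself}, just established, on the quadruple $\{f,d_v,d_x,d_y\}$, reading off $f(v)\le\eps+\del+\nu$ from $\|f-d_x\|_\infty=f(x)$ in two lines. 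You instead stay entirely inside $X$, introducing a second almost-tight partner $q_0$ for the geodesic point $m_0$ and applying~\eqref{eq:hyp} directly to $x_0,y_0,m_0,q_0$; the cancellation you describe does go through (both terms of the maximum are bounded by $f(x_0)+f(y_0)+f(q_0)$ plus the rounding error), and your handling of the extra $\tfrac12$ and of the edge case $f(x_0)>d(x_0,y_0)$ is correct. What your version buys is self-containedness --- the second assertion no longer depends logically on the first --- at the cost of one extra auxiliary point and a longer bookkeeping step; the paper's version is shorter and illustrates the useful trick of applying the four-point condition to mixed quadruples of extremal functions and distance functions.
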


The first part of this result is mentioned without proof 
in~\cite[Section~4.4]{DreMT}, and the argument is given 
in~\cite[(4.1)]{Dre} for the case $\del = 0$.
The second part (with a primarily different proof and a worse bound)
served as the starting point for the present investigation and for the 
thesis~\cite{Moe}, where also a weaker version of 
Theorem~\ref{Thm:intro-groups} below was shown. 
 
Now let $\Gam$ be a group with a finite generating system $S$, equipped with
the word metric $d_S$ with respect to the alphabet $S \cup S^{-1}$. 
The isometric action of $\Gam$ by left multiplication on $\Gam_S = (\Gam,d_S)$ 
extends canonically to an isometric action on the injective hull $\E{\Gam_S}$. 
If $\Gam_S$ has $\beta$-stable intervals 
(see Remark~\ref{Rem:fft-property} for some sufficient conditions), 
Theorem~\ref{Thm:intro-ex} yields that $\E{\Gam_S}$ is a locally 
finite polyhedral complex with finitely many isometry types of $n$-cells 
for every $n$. By virtue of the two propositions above, we obtain the 
following further information.

\begin{Thm} \label{Thm:intro-groups}
Let $\Gam_S = (\Gam,d_S)$ be a finitely generated group.
If $\Gam_S$ has $\beta$-stable intervals, then $\Gam$ acts properly by 
cellular isometries on the complex $\E{\Gam_S}$, and the first barycentric 
subdivision $\E{\Gam_S}^1$ of $\E{\Gam_S}$ is a model for the classifying 
space $\underbar{\rm E}\Gam$ for proper actions. 
If $\Gam_S$ is $\del$-hyperbolic, then $\E{\Gam_S}$ is finite dimensional
and the action is cocompact in addition.
\end{Thm}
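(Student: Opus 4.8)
The plan is to assemble the three assertions from the results already at hand: Theorem~\ref{Thm:intro-ex} supplies the polyhedral structure, Proposition~\ref{Prop:intro-fix} handles fixed point sets, and Proposition~\ref{Prop:intro-hyp} governs the hyperbolic case. Throughout I use that $\Gam_S$ is discretely geodesic with finite balls, so that Theorem~\ref{Thm:intro-ex} applies and $\E{\Gam_S}$ is a proper, locally finite polyhedral complex; I also use that the isometric embedding $\e$ is $\Gam$-equivariant, $g\cdot\e(h)=\e(gh)$, whence $\Gam$ permutes $\e(\Gam)$ simply transitively.

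First I would record that the canonical action is cellular: the polyhedral decomposition of $\E{\Gam_S}$ produced in Section~\ref{Sect:poly} is determined intrinsically by the metric, hence is preserved by every isometry, in particular by each $g\in\Gam$. Next, properness. Since $\E{\Gam_S}$ is proper it suffices to bound, for a fixed $p$ and radius $r$, the set of $g$ with $d(gp,p)\le r$. The triangle inequality gives $d_S(g,e)=d(\e(g),\e(e))=d(g\cdot\e(e),\e(e))\le 2\,d(p,\e(e))+d(gp,p)$, so any such $g$ lies in the ball of radius $2\,d(p,\e(e))+r$ about $e$ in $\Gam_S$, which is finite by hypothesis. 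Thus the action is proper; in particular every point, and hence every cell, has finite stabilizer, and any subgroup fixing a point is finite.

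For the classifying-space assertion I would pass to the first barycentric subdivision $\E{\Gam_S}^1$ precisely in order to make the cellular action \emph{regular}: after subdivision a group element fixing a simplex setwise fixes it pointwise, so $\E{\Gam_S}^1$ is a genuine $\Gam$-CW complex. It then remains to verify the fixed-point characterization of $\underbar{\rm E}\Gam$. If $H\le\Gam$ is infinite, it cannot fix any point (point stabilizers are finite), so $(\E{\Gam_S}^1)^H=\es$. If $F\le\Gam$ is finite, its image in $\Isom(\E{\Gam_S})$ has finite, hence bounded, orbits, and Proposition~\ref{Prop:intro-fix} shows that the metric fixed point set $\Fix(F)$ is non-empty and injective, therefore contractible. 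Regularity of the action guarantees that this set coincides with the subcomplex $(\E{\Gam_S}^1)^F$, so every such fixed-point subcomplex is contractible; this is exactly the criterion for $\E{\Gam_S}^1$ to be a model for $\underbar{\rm E}\Gam$.

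Finally, suppose $\Gam_S$ is $\del$-hyperbolic. By Proposition~\ref{Prop:intro-hyp} in the discretely geodesic case, every point of $\E{\Gam_S}$ lies within distance $\del+\tfrac12$ of $\e(\Gam)$, so $\E{\Gam_S}$ is contained in a tubular neighborhood of $\e(\Gam)$. Because a finitely generated group has a uniform bound on the number of points at distance one, the dimension bound recorded after Theorem~\ref{Thm:intro-ex} shows $\E{\Gam_S}$ is finite dimensional. Cocompactness follows since the closed ball $\ol B(\e(e),\del+\tfrac12)$ is compact by properness and its $\Gam$-translates $\ol B(\e(g),\del+\tfrac12)$ cover $\E{\Gam_S}$; together with local finiteness this yields finitely many orbits of cells. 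I expect the main obstacle to be the careful matching, via regularity of the barycentric action, of Isbell's metrically defined injective fixed-point sets with the equivariant fixed-point subcomplexes demanded by the definition of $\underbar{\rm E}\Gam$.
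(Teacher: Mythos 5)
Your proposal follows the paper's own route in all essentials: properness via the triangle inequality against the orbit $\e(\Gam)$, passage to the barycentric subdivision to obtain a $\Gam$-CW-complex, Proposition~\ref{Prop:intro-fix} for the contractibility of the fixed point sets of finite subgroups, Proposition~\ref{Prop:intro-hyp} together with properness of $\E{\Gam_S}$ for cocompactness, and the uniform rank bound behind Proposition~\ref{Prop:loc-finite} for finite dimensionality (the paper reruns that counting argument to record the explicit bound~\eqref{eq:dim-bound}, but your appeal to the already-stated consequence is legitimate).

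The one step you justify incorrectly is cellularity. The decomposition $\cP=\{P(A)\}_{A\in\cA(\Gam_S)}$ is \emph{not} determined intrinsically by the metric of $\E{\Gam_S}$: the cells are indexed by edge sets on the underlying set $\Gam$, and already for $X=\Z$ the hull $\E{X}$ is isometric to $\R$ with vertex set $\tfrac12\Z$, so a generic translation of $\E{X}$ is an isometry that does not preserve the cell structure. Hence ``preserved by every isometry'' is false, and your argument as stated would fail. What saves the conclusion is that the isometries in question are not arbitrary: by Proposition~\ref{Prop:isometries} the induced map is $\bar L_x(f)=f\circ L_x^{-1}$, so $A(\bar L_x(f))$ is the image of $A(f)$ under $L_x$, and therefore $\bar L_x$ carries $P(A)$ isometrically onto $P(L_x(A))$ with $L_x(A):=\{\{L_x(u),L_x(v)\}:\{u,v\}\in A\}$. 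This short computation is exactly what the paper supplies at this point; with it in place of the ``intrinsic'' claim, the rest of your argument goes through as written.
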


If $\Gam_S$ is $\del$-hyperbolic, $\E{\Gam_S}^1$ has only finitely many 
distinct $\Gam$-orbits of cells and thus constitutes a (so-called) finite 
model for $\underbar{\rm E}\Gam$. 
A corresponding result holds for the Rips complex $\cP_D(\Gam_S)$,
provided the maximal simplex diameter $D$ is chosen sufficiently large 
(see~\cite{MeiS}). It should be noted, by contrast, that the injective hull 
construction requires no such extra parameter and that the entire structure 
of $\E{\Gam_S}^1$ is canonically determined once the finite generating system 
$S$ is fixed. In some simple examples, the dimension of $\E{\Gam_S}$ is 
approximately one half the dimension of the smallest contractible Rips 
complex. It should further be noted that $\E{\Gam_S}$ comes with some 
features of non-positive curvature. 
In fact, for every injective metric space $X$, there is a map 
$\gam \colon X \times X \times [0,1] \to X$ such that 
$\gam_{xy} := \gam(x,y,\cdot)$ is a constant speed geodesic 
from $x$ to $y$ and 
\begin{equation} \label{eq:gamxy}
d(\gam_{xy}(t),\gam_{x'y'}(t)) \le (1-t)d(x,x') + td(y,y'),
\end{equation}
for all $x,y,x',y' \in X$ and $t \in [0,1]$. Thus $X$ satisfies Busemann's 
convexity condition for a suitable geodesic bicombing. In addition, 
$\gam$ can be chosen to be equivariant with respect to the full isometry 
group of $X$ (see~Proposition~\ref{Prop:bicombing}). 
Furthermore, as a consequence of~\eqref{eq:gamxy} 
and a result from~\cite{Wen}, injective metric spaces
satisfy isoperimetric filling inequalities of Euclidean type 
for integral cycles in any dimension, like $\CAT(0)$ spaces. 

These features relate Theorem~\ref{Thm:intro-groups} to the 
long-standing question in geometric group theory whether every word 
hyperbolic group acts properly and cocompactly by isometries on a 
$\CAT(0)$ or even $\CAT(-1)$ space. In the first instance, the result 
shows that the answer is positive if the $\CAT(0)$ condition is relaxed 
to the weaker convexity property of~\eqref{eq:gamxy}. The metric of 
$\E{\Gam_S}$ is piecewise of the type of $l^n_\infty$ and is therefore 
not $\CAT(0)$, unless $\E{\Gam_S}$ is one-dimensional (a tree). It is 
natural to ask whether $\E{\Gam_S}$ can be equipped with an honest 
equivariant $\CAT(0)$ metric. The answer turns out to be 
positive, for instance, if $\E{\Gam_S}$ has dimension two.
This and further results on the structure of injective
hulls of finitely generated groups will be discussed in a subsequent 
article. 


\section{Injective metric spaces} \label{Sect:inj}

We start by discussing some basic examples, properties, and 
characterizations of injective metric spaces. This section is largely 
expository. 

The set of all $1$-Lipschitz maps from a metric space
$B$ into another metric space $X$ will be denoted by $\Lip_1(B,X)$. 
Recall that $X$ is \emph{injective} if for every metric space $B$,
every $A \sub B$, and every $f \in \Lip_1(A,X)$ there exists 
$\ol f \in \Lip_1(B,X)$ such that $\ol f|_A = f$.
(Note that for $A = \es \ne B$ this says that $X \ne \es$.)

The most basic examples of injective metric spaces are the real line $\R$
and all non-empty closed subintervals, with the usual metric. 
For instance, if $f \in \Lip_1(A,\R)$, where $A \ne \es$ is a subset 
of a metric space $B$, then  
\begin{equation} \label{eq:least-ext}
\ol f(b) := \sup_{a \in A}(f(a) - d(a,b))
\end{equation}
defines the least possible extension $\ol f \in \Lip_1(B,\R)$ of $f$.

It follows easily from the definition that every injective metric space 
$X$ is complete and geodesic. Indeed, if $\bar X$ denotes the completion, 
then the identity map on $X$ extends to a $1$-Lipschitz retraction 
$\pi \colon \bar X \to X$ which turns out to be an isometry 
as $X$ is dense in $\bar X$. Furthermore, given $x,y \in X$, the map
that sends $0 \in \R$ to $x$ and $l := d(x,y)$ to $y$ extends to a 
$1$-Lipschitz map $\gam \colon [0,l] \to X$ which, due to the triangle 
inequality, is in fact an isometric embedding.

Another basic property is that for every triple
of points $x,y,z$ in an injective metric space $X$ there is a 
(not necessarily unique) median point $v \in X$, that is, a point in 
$\bw(x,y) \cap \bw(y,z) \cap \bw(z,x)$.
This is shown by extending the isometric inclusion $\{x,y,z\} \to X$
to a $1$-Lipschitz map from $Q := (\{x,y,z,u\},\bar d)$ to $X$, where
the metric $\bar d$ is determined by the requirement that it agrees 
with~$d$ on $\{x,y,z\}$ and that the additional point $u$ is a median point 
of $x,y,z$ in $Q$ (thus $\bar d(u,z) = (x \mid y)_z$, and so on,
see~\eqref{eq:Gromov-prd}). As above, this $1$-Lipschitz extension
is in fact an isometric embedding, and the image of $u$ is the desired 
median point $v \in Y$. 
One may choose geodesic segments $[v,x],[v,y],[v,z]$ to produce a 
geodesic tripod spanned by $x,y,z$ (thus $[v,x] \cup [v,y]$ is a geodesic 
segment from $x$ to $y$, and so on). 
Checking the existence of median points is a simple and useful first test
for injectivity. 
Furthermore, it follows that if $X$ is an injective metric space with 
the property that every pair of points $x,y$ is connected by a unique 
geodesic segment $[x,y]$, then every geodesic triangle in $X$ is a tripod
and so $X$ is an $\R$-tree. 
The converse is a well-known fact:

\begin{Prop} \label{Prop:tree-inj}
Every complete $\R$-tree $X$ is injective.
\end{Prop}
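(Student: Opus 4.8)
The plan is to verify injectivity by the standard one-point extension argument combined with Zorn's lemma, the crux being a Helly (or hyperconvexity) property of closed balls in a complete $\R$-tree.

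First I would set up Zorn's lemma on the poset of $1$-Lipschitz extensions $(A',g)$ of a given $f \in \Lip_1(A,X)$, with $A \sub A' \sub B$ and $g|_A = f$, ordered by extension; chains have upper bounds (take unions of domains and maps, which remain $1$-Lipschitz by compatibility), so a maximal element $(A^*,g^*)$ exists. It then suffices to show that if $A^* \ne B$ one can always enlarge the domain by a single point $b \in B \sm A^*$, contradicting maximality and forcing $A^* = B$. Extending $g^*$ over $b$ so as to stay $1$-Lipschitz amounts exactly to choosing a point in $\bigcap_{a \in A^*} \ol B(g^*(a),d(a,b))$, so the whole proof reduces to showing this intersection is non-empty. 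These balls pairwise intersect: since $g^*$ is $1$-Lipschitz, $d(g^*(a),g^*(a')) \le d(a,a') \le d(a,b) + d(b,a')$, so the radii satisfy the compatibility condition $d(x_i,x_j) \le r_i + r_j$.

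Next I would prove the required Helly property: any family $\{\ol B(x_i,r_i)\}_{i \in I}$ of closed balls in a complete $\R$-tree $X$ with $d(x_i,x_j) \le r_i + r_j$ for all $i,j$ has non-empty intersection. Two elementary facts of tree geometry are used throughout: closed balls are geodesically convex (for $m$ on a segment $[p,q]$ one has $d(x,m) \le \max\{d(x,p),d(x,q)\}$, seen via the median of $x,p,q$), and any three points have a median point lying on each of the three connecting segments. For finitely many balls I would argue by induction, the key case being three pairwise-intersecting convex sets $C_1,C_2,C_3$: choosing $p_{ij} \in C_i \cap C_j$, the median of $p_{12},p_{13},p_{23}$ lies in $[p_{12},p_{13}] \sub C_1$, in $[p_{12},p_{23}] \sub C_2$, and in $[p_{13},p_{23}] \sub C_3$, hence in the common intersection; the usual Helly induction (intersecting every set with one fixed ball) then upgrades this to all finite subfamilies.

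The hard part is passing from finite to infinite families, and this is precisely where completeness enters. Here I would fix an index $0$ and, for each finite $J \sub I$ with $0 \in J$, let $p_J$ be the nearest-point projection of $x_0$ onto the non-empty closed convex set $C_J := \bigcap_{i \in J} \ol B(x_i,r_i)$ (the projection exists and is unique in a complete $\R$-tree). As $J$ grows the sets $C_J$ shrink, and in a tree the projection onto a smaller convex set lies beyond the projection onto a larger one along the geodesic issuing from $x_0$; thus the geodesics $[x_0,p_J]$ are nested and the lengths $d(x_0,p_J)$ form a monotone net bounded by $r_0$. This forces $(p_J)$ to be a Cauchy net, which converges by completeness to a point $p^*$; since each ball is closed and $p_J \in \ol B(x_i,r_i)$ whenever $i \in J$, the limit $p^*$ lies in every ball, giving the non-empty intersection and completing the extension. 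The main obstacle is organizing this completeness/projection step cleanly; everything else is the median calculus of $\R$-trees.
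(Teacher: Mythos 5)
Your proof is correct, but it takes a genuinely different route from the paper's. You run the classical Aronszajn--Panitchpakdi scheme: Zorn's lemma on partial extensions, reduction of the one-point extension to a ball-intersection problem, and then hyperconvexity of complete $\R$-trees via the Helly property for convex subtrees (median argument for three pairwise-intersecting convex sets, Helly induction for finite families, and a nested nearest-point-projection net plus completeness for infinite families). The paper explicitly sidesteps exactly this route (``Most proofs in the literature proceed via pointwise extensions and transfinite induction'') and instead gives a direct global construction: fix a base point $z$, take the least non-negative $1$-Lipschitz extension $\bar\rho$ of $a \mapsto d_z(f(a))$, and realize $\bar f(b)$ as the limit of points $x(a_i,b)$ sitting at distance $\rho(a_i,b)$ from $z$ on the segments $[z,f(a_i)]$, the key estimate being inequality~\eqref{eq:dvv}. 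What each approach buys: yours is the standard, robust argument and actually establishes the slightly stronger-sounding hyperconvexity of complete $\R$-trees together with the tree Helly property, which are of independent interest; the paper's argument avoids transfinite induction, defines the whole extension in one stroke by an essentially canonical formula (the limit is independent of the choice of $(a_i)$), and specializes for $(X,z)=(\R,0)$ to the explicit extension~\eqref{eq:least-ext} that is reused in the proof of Proposition~\ref{Prop:p-inj}. The only points in your write-up that would need to be fleshed out in a full proof are the standard facts you invoke about projections in complete $\R$-trees (existence and uniqueness of the nearest-point projection onto a closed convex set, and the gate property ensuring that $p_J \in [x_0,p_{J'}]$ for $J \sub J'$, which is what makes $(p_J)$ Cauchy); these are all true and routine.
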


Most proofs in the literature proceed via pointwise extensions and transfinite
induction (compare Proposition~\ref{Prop:hyperconvex} below).
The following direct argument, extracted from a more 
general construction in~\cite{Lan}, adapts~\eqref{eq:least-ext} to trees.  

\begin{proof}
Fix a base point $z \in X$.
Let $f \in \Lip_1(A,X)$, where $\es \ne A \sub B$.
For every pair $(a,b) \in A \times B$, define 
\[
\rho(a,b) := \max\{ 0, d_z(f(a)) - d(a,b) \}
\]
and let $x(a,b)$ be the point on the segment $[z,f(a)]$ at 
distance $\rho(a,b)$ from $z$. For two such pairs $(a,b),(a',b')$, 
consider the tripod spanned by $z,f(a),f(a')$. 
Depending on the positions of $x(a,b)$ and $x(a',b')$ on the tripod,
$d(x(a,b),x(a',b'))$ equals either $|\rho(a,b) - \rho(a',b')|$ or 
$d(f(a),f(a')) - d(a,b) - d(a',b')$.
Since $d(f(a),f(a')) \le d(a,a')$, it follows that
\begin{equation} \label{eq:dvv}
d(x(a,b),x(a',b')) \le \max\{ |\rho(a,b) - \rho(a',b')|, d(b,b') \}.
\end{equation}
To define $\ol f \colon B \to X$ at the point $b \in B$, 
choose a sequence $(a_i)$ in $A$ such that 
\[
\lim_{i \to \infty} \rho(a_i,b) = \bar\rho(b) := \sup_{a \in A} \rho(a,b).
\]
The corresponding sequence $(x(a_i,b))$ in $X$ is Cauchy by~\eqref{eq:dvv},
and $\ol f(b)$ is defined as its limit, which is independent
of the choice of $(a_i)$. Note that $\bar\rho \colon B \to \R$ 
is the least non-negative $1$-Lipschitz extension of $d_z(f(\cdot))$ 
(compare~\eqref{eq:least-ext}).
It follows from~\eqref{eq:dvv} that $\ol f \in \Lip_1(B,X)$. 
To check that $\ol f$ extends $f$, let $b \in A$, and let $(a_i)$ be a 
sequence in $A$ such that $\rho(a_i,b) \to \bar\rho(b)$. 
We have $\rho(b,b) = d_z(f(b)) = \bar\rho(b)$ and $x(b,b) = f(b)$,
so $d(x(a_i,b),f(b)) \le |\rho(a_i,b) - \bar\rho(b)|$ by~\eqref{eq:dvv}.
Since $x(a_i,b) \to \ol f(b)$, this gives $\ol f(b) = f(b)$. 
\end{proof}

The $l_\infty$ product of a non-empty family $\{(X_i,d_i,z_i)\}_{i\in I}$
of pointed metric spaces is defined as the set 
of all $x = (x_i)_{i \in I}$ with $x_i \in X_i$ and 
$\sup_{i \in I} d_i(x_i,z_i) < \infty$,
endowed with the metric $(x,x') \mapsto \sup_{i\in I} d_i(x_i,x'_i)$.
Here $I \ne \es$ is an arbitrary index set; if $I$ is finite
or the diameters of the $X_i$ are uniformly bounded, base points may be 
disregarded. It is easy to see that if each $(X_i,d_i)$ is injective, 
then so is the $l_\infty$ product. 
In case $(X_i,z_i) = (\R, 0)$ for all $i \in I$, the corresponding 
$l_\infty$ product is the Banach space $l_\infty(I)$, which is thus an 
injective metric space. Similarly, $L_\infty(Y,\mu)$ is injective for 
every measure space $(Y,\mu)$.

Next we recall some well-known characterizations of injective
metric spaces. A metric space $X$ is called an 
\emph{absolute $1$-Lipschitz retract} 
if, whenever $i \colon X \to Y$ is an isometric embedding into
another metric space $Y$, there exists a $1$-Lipschitz retraction 
of $Y$ onto $i(X)$. If $X$ is injective and $i \colon X \to Y$ 
is an isometric embedding, then $i(X)$ is injective
and thus the identity map on $i(X)$ extends to 
a $1$-Lipschitz retraction $\pi \colon Y \to i(X)$.
On the other hand, every metric space $X$ embeds isometrically 
into $l_\infty(X)$ via the map $k_z \colon x \mapsto d_x - d_z$, 
for any base point $z \in X$. Hence, if $k_z(X)$ is a $1$-Lipschitz retract 
in $l_\infty(X)$, then $X$ is injective since $l_\infty(X)$ is.
This shows:

\begin{Prop} \label{Prop:alr}
A metric space $X$ is injective if and only if it is an absolute
$1$-Lipschitz retract.
\end{Prop}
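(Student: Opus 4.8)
The plan is to prove the equivalence in Proposition~\ref{Prop:alr} by establishing both implications, noting that almost all the necessary ingredients have already been assembled in the discussion immediately preceding the statement. I would organize the proof around the two directions separately.

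First I would prove that injectivity implies the absolute $1$-Lipschitz retract property. Suppose $X$ is injective and $i \colon X \to Y$ is an isometric embedding. The key observation is that injectivity is preserved under isometry, so $i(X)$ is itself injective. Applying the defining extension property of $i(X)$ to the identity map $\id_{i(X)} \in \Lip_1(i(X),i(X))$, viewed as a $1$-Lipschitz map on the subset $i(X) \sub Y$, yields a $1$-Lipschitz extension $\pi \colon Y \to i(X)$ with $\pi|_{i(X)} = \id_{i(X)}$. This $\pi$ is precisely the desired $1$-Lipschitz retraction, so $X$ is an absolute $1$-Lipschitz retract.

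For the converse, I would use the universal embedding into $l_\infty(X)$. The Kuratowski-type map $k_z \colon x \mapsto d_x - d_z$ embeds $X$ isometrically into the injective space $l_\infty(X)$. If $X$ is an absolute $1$-Lipschitz retract, then there is a $1$-Lipschitz retraction $\pi \colon l_\infty(X) \to k_z(X)$. Now I would verify directly that a $1$-Lipschitz retract of an injective space is injective: given any $f \in \Lip_1(A,k_z(X))$ with $A \sub B$, compose with the inclusion $k_z(X) \hookrightarrow l_\infty(X)$, extend through injectivity of $l_\infty(X)$ to some $g \in \Lip_1(B,l_\infty(X))$, and then post-compose with $\pi$ to land back in $k_z(X)$; since $\pi$ fixes $k_z(X)$ pointwise, $\pi \circ g$ restricts to $f$ on $A$ and is $1$-Lipschitz. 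Hence $k_z(X)$, and therefore its isometric copy $X$, is injective.

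I do not expect any serious obstacle here, as the statement is essentially a bookkeeping consolidation of facts already proved in the text; the only point requiring genuine (if routine) verification is the general lemma that a $1$-Lipschitz retract of an injective metric space is again injective, and this follows immediately from the compose-extend-retract argument sketched above. The main care needed is simply to treat both implications cleanly and to invoke the already-established injectivity of $l_\infty(X)$ and the isometric nature of the embedding $k_z$.
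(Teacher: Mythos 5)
Your proposal is correct and follows essentially the same route as the paper: the forward direction by extending the identity on $i(X)$ using injectivity of the isometric image, and the converse via the Kuratowski embedding $k_z$ into $l_\infty(X)$ together with the (routine) fact that a $1$-Lipschitz retract of an injective space is injective. The only difference is that you spell out the compose-extend-retract verification, which the paper leaves implicit.
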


At this point, we note that every injective metric space $X$
is contractible. If $\pi \colon l_\infty(X) \to X' := k_z(X)$ is a 
$1$-Lipschitz retraction onto the image of the Kuratowski embedding, then 
$h(x',t) := \pi(tx')$ defines a homotopy 
$h \colon X' \times [0,1] \to X'$ from the constant map with value
$0 = k_z(z)$ to the identity map. A map $\gam$ as in~\eqref{eq:gamxy} 
can be obtained in a similar way. See also 
Proposition~\ref{Prop:bicombing} and~\cite[Theorem~1.1]{Isb}.

Another characterization of injective metric spaces relies on pointwise 
extensions of $1$-Lipschitz maps. A metric space $X$ is said to be 
\emph{hyperconvex} if every family $((x_i,r_i))_{i \in I}$ in $X \times \R$ 
with the property that $r_i + r_j \ge d(x_i,x_j)$ for all pairs of 
indices $i,j \in I$ satisfies $\bigcap_{i \in I}B(x_i,r_i) \ne \es$.
(We adopt the convention that the intersection equals $X$ if $I = \es$,
so that hyperconvex spaces are non-empty by definition.)
This terminology was introduced
by Aronszajn and Panitchpakdi in~\cite{AroP}, who also observed 
Proposition~\ref{Prop:alr} and the next result. 

\begin{Prop} \label{Prop:hyperconvex}
A metric space $X$ is injective if and only if it is hyperconvex.
\end{Prop}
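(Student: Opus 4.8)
The plan is to prove both implications of the equivalence between injectivity and hyperconvexity.

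For the direction that injective implies hyperconvex, I would argue as follows. Suppose $X$ is injective and let $((x_i,r_i))_{i\in I}$ be a family in $X\times\R$ with $r_i+r_j\ge d(x_i,x_j)$ for all $i,j$. I want to produce a point in $\bigcap_{i\in I}B(x_i,r_i)$. The idea is to enlarge $X$ by a single point. Consider the set $B:=X\cup\{u\}$ with a metric $\bar d$ that restricts to $d$ on $X$ and assigns $\bar d(u,x_i):=r_i$ to the distinguished points, extending consistently to all of $X$ by the formula $\bar d(u,x):=\inf_{i}(r_i+d(x_i,x))$ (or a similar least-extension recipe). The compatibility hypothesis $r_i+r_j\ge d(x_i,x_j)$ is exactly what is needed to check that $\bar d$ is a genuine metric (satisfies the triangle inequality). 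Then the identity map $X\to X$ is $1$-Lipschitz, so by injectivity it extends to a $1$-Lipschitz map $\ol f\colon B\to X$; the image $v:=\ol f(u)$ then satisfies $d(v,x_i)=d(\ol f(u),\ol f(x_i))\le\bar d(u,x_i)=r_i$, so $v\in\bigcap_i B(x_i,r_i)$.

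For the converse, that hyperconvex implies injective, I would proceed by pointwise extension together with transfinite induction (Zorn's lemma), as flagged in the remark preceding the statement. Given $f\in\Lip_1(A,X)$ with $A\sub B$, I consider the poset of $1$-Lipschitz extensions of $f$ to intermediate sets $A\sub A'\sub B$, ordered by extension, and apply Zorn's lemma to obtain a maximal such extension $g\colon A'\to X$. The crux is to show $A'=B$: if some $b\in B\sm A'$ existed, I would extend $g$ to $A'\cup\{b\}$ by choosing an appropriate value at $b$, contradicting maximality. To find that value, set $x_a:=g(a)$ and $r_a:=d(a,b)$ for $a\in A'$; since $g$ is $1$-Lipschitz, $r_a+r_{a'}=d(a,b)+d(a',b)\ge d(a,a')\ge d(g(a),g(a'))=d(x_a,x_{a'})$, so the family $((x_a,r_a))_{a\in A'}$ is compatible. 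Hyperconvexity then gives a point $v\in\bigcap_{a\in A'}B(x_a,r_a)$, i.e.\ $d(v,g(a))\le d(a,b)$ for all $a$, and setting $g(b):=v$ yields a $1$-Lipschitz extension.

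The main obstacle I anticipate is the first direction, specifically the need to handle an arbitrary (possibly infinite) index set $I$ and to verify carefully that the one-point extension $\bar d$ is well defined and metric; one must ensure finiteness of the infimum defining $\bar d(u,\cdot)$ and check the triangle inequalities involving $u$ both among the $x_i$ and for general points of $X$. The converse direction is comparatively routine once the Zorn's lemma framework is set up, the only subtlety being the verification that the union of a chain of extensions is again a $1$-Lipschitz extension, which is immediate since the Lipschitz condition is checked on pairs.
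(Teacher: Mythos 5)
Your proof is correct and matches the paper's (sketched) argument: the paper likewise derives hyperconvexity from injectivity via a one-point metric extension (the same device it uses for median points) and proves the converse by pointwise extension of $1$-Lipschitz maps combined with transfinite induction. The only cosmetic slip is that your least-extension formula gives $\bar d(u,x_i)\le r_i$ rather than equality in general, which only strengthens the conclusion $d(v,x_i)\le r_i$.
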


For the proof, one notes first that if $f \in \Lip_1(A,X)$, $\es \ne A \sub B$, 
and $b \in B \sm A$, then $d(a,b) + d(a',b) \ge d(a,a') \ge d(f(a),f(a'))$ 
for all $a,a' \in A$. Hence, if $X$ is hyperconvex, then 
$\bigcap_{a \in A}B(f(a),d(a,b))$ is non-empty, 
and one obtains an extension $f_b \in \Lip_1(A \cup \{b\},X)$ of $f$ 
by declaring $f_b(b)$ to be any point in this intersection.
By iterating this process transfinitely, in general, one 
infers that $X$ is injective. Conversely, if $X$ is injective,
a similar argument as for the existence of median points shows that
$X$ is hyperconvex. A useful direct consequence of this characterization 
is that the intersection of a family of closed balls in an injective 
metric space is injective, whenever the intersection is non-empty.
Some key results on hyperconvex metric spaces were shown by Baillon~\cite{Bai}. 
Proposition~\ref{Prop:hyperconvex} will only be used in 
Section~\ref{Sect:proofs} for the proof of Proposition~\ref{Prop:intro-fix}.

A concept very close to hyperconvexity is the 
{\em binary intersection property}, obtained by replacing 
the inequality $r_i + r_j \ge d(x_i,x_j)$ in the above definition by 
the condition $B(x_i,r_i) \cap B(x_j,r_j) \ne \es$. The two concepts
agree for geodesic metric spaces. Nachbin~\cite[Theorem~1]{Nac} showed 
that a normed real vector space $X$ has the binary intersection property if 
and only if $X$ is {\em linearly injective}, that is, for every real
normed space $B$, every linear subspace $A \sub B$, 
and every bounded linear operator $f \colon A \to X$ there exists a 
linear extension $\ol f \colon B \to X$ with norm $\|\ol f\| = \|f\|$. 
(The Hahn--Banach Theorem thus asserts that $\R$ is linearly injective.) 
Hence, a real normed space $X$ is injective as a metric space if and only 
if $X$ is injective in the linear category, and no ambiguity arises. 
By~\cite[Theorem~3]{Nac}, an $n$-dimensional normed space $X$ is injective 
if and only if $X$ is linearly isometric to 
$l_\infty^n$ or, in other words, balls in $X$ are parallelotopes. 
The final classification result, usually attributed to 
Nachbin--Goodner--Kelley, asserts that a real normed space is injective 
if and only if it is isometrically isomorphic to the Banach space $C(K)$ of 
continuous real valued functions on some extremally disconnected compact 
Hausdorff space $K$, endowed with the supremum norm. See~\cite{Kel}.

It is clear that linear subspaces of injective normed spaces need 
not be injective. A familiar example is the plane 
$H = \{x_1 + x_2 + x_3 = 0\}$ in $l_\infty^3$, whose norm ball is hexagonal. 
One may also check directly that the triple of points 
$(1,1,-2),(1,-2,1),(-2,1,1)$ has no median point in $H$. 
We conclude this section by showing that certain subsets of 
$l_\infty^n$ (or $l_\infty(I)$) defined by linear inequalities involving 
at most two variables are injective. This will be employed
to prove that the polyhedral cells of $\E{X}$ are themselves injective 
(compare Theorem~\ref{Thm:intro-ex}); however, this fact will not be used
further in this paper.

\begin{Prop} \label{Prop:p-inj}
Let $I \ne \es$ be any index set. Suppose that $Q$ is a non-empty subset
of $l_\infty(I)$ given by an arbitrary system of inequalities
of the form $\sig x_i \le C$ or $\sig x_i + \tau x_j \le C$
with $|\sig|,|\tau| = 1$ and $C \in \R$. Then $Q$ is injective.
\end{Prop}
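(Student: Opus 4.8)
The plan is to use the equivalence of injectivity and hyperconvexity (Proposition~\ref{Prop:hyperconvex}) and to verify the latter directly, exploiting the fact that in $l_\infty(I)$ the intersection of a family of balls is a product of intervals. Suppose $((p^k,r_k))_{k\in K}$ is a family in $Q\times\R$ with $r_k+r_l\ge\|p^k-p^l\|_\infty$ for all $k,l$; I must produce a point of $Q$ lying in $\bigcap_k B(p^k,r_k)$. Since the inequalities $\sig x_i\le C$ are single-variable, I first absorb all one-variable constraints defining $Q$ into coordinatewise bounds, setting $\alpha_i:=\sup_k(p^k_i-r_k)$ and $\beta_i:=\inf_k(p^k_i+r_k)$ (further raised or lowered by those one-variable constraints). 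The ball condition gives $p^k_i-r_k\le p^l_i+r_l$, whence $\alpha_i\le\beta_i$, and the same computation shows each $[\alpha_i,\beta_i]$ is a non-empty compact interval; here the pairwise hypothesis enters for the first time. It then remains to find a point of the box $K:=\prod_i[\alpha_i,\beta_i]$ satisfying the genuinely two-variable constraints $\sig x_i+\tau x_j\le C$.

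Since $K$ is compact in the product topology (Tychonoff) and the solution set of each two-variable constraint is closed in $K$ (depending continuously on two coordinates), the finite intersection property reduces the task to realising every \emph{finite} subfamily of these constraints by a point of $K$. Thus the problem becomes the feasibility of a finite system of inequalities $\sig x_i+\tau x_j\le C$, $|\sig|=|\tau|=1$, together with the box bounds $\alpha_i\le x_i\le\beta_i$ — a finite-dimensional linear program in the variables that actually occur.

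For such a finite two-variable system I would argue feasibility by variable elimination: eliminating one variable between two constraints, or between a constraint and a box bound, with opposite coefficients again yields an inequality in at most two variables, so the class is closed under Fourier--Motzkin elimination and the procedure terminates. Infeasibility would therefore produce a contradiction $0\le-c$ with $c>0$ as a non-negative combination of the given inequalities, that is, a suitable cycle of two-variable and box constraints. The heart of the matter is to show that no such combination exists, and this is exactly where the two standing hypotheses are used: each center $p^k\in Q$ satisfies all the constraints defining $Q$, while the compatibility $r_k+r_l\ge\|p^k-p^l\|_\infty$ controls the box bounds across different indices. The model case $Q=\{x_1+x_2\le0\}$ already exhibits the mechanism: the only obstruction to $(\alpha_1,\alpha_2)\in Q$ is $\alpha_1+\alpha_2>0$, but $\alpha_1+\alpha_2=\sup_{k,l}\big((p^k_1-r_k)+(p^l_2-r_l)\big)$, and using $p^k_1+p^k_2\le0$ together with $p^l_2-p^k_2\le\|p^l-p^k\|_\infty\le r_k+r_l$ one obtains $p^k_1+p^l_2\le r_k+r_l$, so $\alpha_1+\alpha_2\le0$ and $(\alpha_1,\alpha_2)$ works.

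The main obstacle is this last step in full generality: organising the elimination so that an arbitrary inconsistent combination of sum-type and difference-type constraints, interleaved with the one-sided box bounds, is always reduced — via the constraint satisfied at a single center and the pairwise ball inequality linking two centers — to a violation of $r_k+r_l\ge\|p^k-p^l\|_\infty$ or of $Q\neq\es$. The sum constraints $x_i+x_j\le C$ are the awkward ones; I expect it cleanest to recast every inequality as a difference constraint on the doubled index set $\{i^+,i^-\}$ subject to $x_{i^-}=-x_{i^+}$, turning the system into one on a signed graph whose feasibility is governed by the absence of negative closed walks, and then to estimate each such walk by telescoping the centers' coordinates and applying the ball inequality along the walk.
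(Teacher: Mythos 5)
Your overall strategy---verify hyperconvexity directly via Proposition~\ref{Prop:hyperconvex}, reduce the intersection of balls in $l_\infty(I)$ to a box $\prod_i[\alpha_i,\beta_i]$, invoke Tychonoff compactness to pass to finite subsystems, and then settle a finite feasibility problem for unit two-variable inequalities---is a legitimate route and genuinely different from the paper's, which instead exhibits an explicit $1$-Lipschitz extension operator (after normalizing $0\in Q$, the componentwise combination of the least and greatest $1$-Lipschitz extensions, $\ol f(b)=\sup\{0,\sup_{a}(f(a)-d(a,b))\}+\inf\{0,\inf_{a'}(f(a')+d(a',b))\}$) and checks by a short case analysis that it preserves every inequality $\sig x_i+\tau x_j\le C$. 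The reductions in your first two paragraphs are sound: $\alpha_i\le\beta_i$ follows from the pairwise condition, the box lies inside a single ball and hence in $l_\infty(I)$, the one-variable constraints can be absorbed using $r_k\ge 0$ and $p^k\in Q$, and each two-variable constraint cuts out a closed subset of the compact box, so the finite intersection property applies.

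However, the argument stops exactly where the real work begins. The feasibility of the finite system---box bounds plus the two-variable constraints of $Q$---\emph{is} the content of the proposition in finite dimensions, and you prove it only for the single constraint $x_1+x_2\le 0$, where it suffices to test one corner of the box. Your own text flags the general case as ``the main obstacle'' and offers a plan (double the index set, exclude negative closed walks by telescoping along the walk) rather than an argument. This is precisely where the two hypotheses must interact nontrivially: an infeasibility certificate is an alternating cycle of $Q$-edges and box-bound edges; each box bound must be witnessed by a possibly different center $p^k$ nearly attaining the relevant supremum or infimum; and the telescoping has to thread the sign reversals introduced by the sum-type constraints while charging each jump between centers to $\|p^k-p^l\|_\infty\le r_k+r_l$ and each $Q$-edge to the fact that a single center satisfies it. With several interacting constraints on overlapping variables this is not routine bookkeeping, and as written the proposal does not constitute a proof. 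To complete it along these lines you would need to state and prove the no-negative-cycle lemma in full generality; alternatively, the paper's extension-operator construction bypasses the combinatorics entirely.
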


We use a similar explicit construction as for $\R$-trees.
Some further results on injective polyhedral sets in $l_\infty^n$ can be found 
in~\cite[Section~1.8.2]{Moe}. 
A good characterization of such sets seems to be missing.

\begin{proof}
Assume that $0 \in Q$, so that all constants on the right sides
of the inequalities describing $Q$ are non-negative. 
For $i \in I$, denote by $R_i$ the reflection of $l_\infty(I)$ that 
interchanges $x_i$ with $-x_i$.
Let $B$ be a metric space and $\es \ne A \sub B$.
We show that there exists an extension operator 
$\phi \colon \Lip_1(A,l_\infty(I)) \to \Lip_1(B,l_\infty(I))$ such that 
\begin{equation} \label{eq:l1}
\phi(R_i \circ f) = R_i \circ \phi(f)
\end{equation}
for every $i$, and such that the components of $\phi(f)$ satisfy
\begin{equation} \label{eq:l2}
\phi(f)_i + \phi(f)_j \le C
\end{equation}
whenever $f_i + f_j \le C$ for some pair of possibly equal indices $i,j$
and some constant $C \ge 0$. This clearly gives the result. 

First, for a real valued function 
$f \in \Lip_1(A,\R)$, we combine the smallest and largest $1$-Lipschitz
extensions and define $\ol f \colon B \to \R$ by
\[
\ol f(b) := \sup \Bigl\{ 0,\,\sup_{a \in A}(f(a) - d(a,b)) \Bigr\} + 
\inf \Bigl\{ 0,\,\inf_{a' \in A}(f(a') + d(a',b)) \Bigr\}.
\]
Note that at most one of the two summands is nonzero since 
$f(a) - d(a,b) \le f(a') + d(a,a') - d(a,b) \le f(a') + d(a',b)$.
It is not difficult to check that $\ol f$ is a $1$-Lipschitz extension 
of $f$ and that $\ol{R \circ f} = R \circ \ol f$ for the reflection 
$R \colon x \mapsto -x$ of $\R$. 
(The proof of Proposition~\ref{Prop:tree-inj} yields precisely 
this extension $\ol f$ in the case $(X,z) = (\R,0)$.)

Now, for $f \in \Lip_1(A,l_\infty(I))$,
define $\phi(f)$ such that $\phi(f)_i = \ol{f_i}$ for every~$i$.
Clearly $\phi(f) \in \Lip_1(B,l_\infty(I))$, and~\eqref{eq:l1} holds. 
As for~\eqref{eq:l2}, suppose that $f_i + f_j \le C$ for some indices $i,j$
and some constant $C \ge 0$. Let $b \in B$, and assume that  
$\phi(f)_i(b) \ge \phi(f)_j(b)$. If $\phi(f)_j(b) > 0$, then
\begin{align*}
\phi(f)_i(b) + \phi(f)_j(b)
&= \sup_{a,a' \in A} (f_i(a) + f_j(a') - d(a,b) - d(a',b)) \\
&\le \sup_{a,a' \in A} (f_i(a) + f_j(a') - d(a,a')) \\
&\le \sup_{a \in A} (f_i(a) + f_j(a)) \le C.
\end{align*}
If $\phi(f)_i(b) > 0 \ge \phi(f)_j(b)$, then 
\begin{align*}
\phi(f)_i(b) + \phi(f)_j(b)
&\le \sup_{a \in A} (f_i(a) - d(a,b)) + \inf_{a' \in A} (f_j(a') + d(a',b)) \\
&\le \sup_{a \in A} (f_i(a) + f_j(a)) \le C.
\end{align*}
Finally, if $\phi(f)_i(b) \le 0$, then 
$\phi(f)_i(b) + \phi(f)_j(b) \le 0 \le C$.
\end{proof}


\section{Injective hulls} \label{Sect:hull}

We now review Isbell's~\cite{Isb} construction $X \mapsto \E{X}$ in detail.
Our proof of the injectivity of $\E{X}$ differs from Isbell's in that
it does not appeal to Zorn's Lemma or the like. Instead we employ an 
observation by Dress, restated in Proposition~\ref{Prop:retr} below, 
which will be of further use.

Let $X$ be a (non-empty) metric space. Denote by $\R^X$ the 
vector space of all real valued functions on $X$,
and define
\[
\D{X} := \{ f \in \R^X : 
\text{$f(x) + f(y) \ge d(x,y)$ for all $x,y \in X$}\} 
\]
(compare~\cite[p.~35]{Nac}). By the triangle inequality, the distance 
function $d_z$ belongs to $\D{X}$ for each $z \in X$, and clearly
all elements of $\D{X}$ are non-negative.
Isbell called a function $f \in \R^X$ \emph{extremal} 
if it is a minimal element of the partially ordered set $(\D{X},\le)$, 
where $g \le f$ means $g(x) \le f(x)$ for all $x \in X$ as usual. Thus
\[
\E{X} := \{ f \in \D{X} : 
\text{if $g \in \D{X}$ and $g \le f$, then $g = f$}\} 
\]
is the set of extremal functions on $X$. 
In case $X$ is compact, $f \in \D{X}$ is extremal if and only if 
for every $x \in X$ there exists $y \in X$ such that 
$f(x) + f(y) = d(x,y)$. In general, $f \in \R^X$ is extremal if 
and only if 
\begin{equation} \label{eq:extr-sup}
f(x) = \sup_{y \in X}(d(x,y) - f(y))
\end{equation}
for all $x \in X$. Each $d_z$ is extremal.
Applying~\eqref{eq:extr-sup} twice one obtains
\[
f(x) \le \sup_{y \in X}(d(x,x') + d(x',y) - f(y)) = d(x,x') + f(x')
\]
for all $x,x' \in X$, so every $f \in \E{X}$ is $1$-Lipschitz.
Now consider the set
\[
\Dl{X} := \D{X} \cap \Lip_1(X,\R),
\]
equipped with the metric
\[
(f,g) \mapsto \|f - g\|_\infty = \sup_{x \in X} |f(x) - g(x)|.
\]
To see that the supremum is finite, note that 
a function $f \in \R^X$ belongs to $\Dl{X}$ if and only if 
$|f(x) - d(x,y)| \le f(y)$ for all $x,y \in X$ or, equivalently, 
\begin{equation} \label{eq:fdf}
\|f - d_y\|_\infty = f(y) 
\end{equation}
for all $y \in X$. Hence, $\|f - g\|_\infty \le \inf(f+g)$.
The set $\E{X}$ is contained in $\Dl{X}$ and 
is equipped with the induced metric. The map
\[
\e \colon X \to \E{X}, \quad \e(y) = d_y,
\]
is a canonical isometric embedding of $X$ into $\E{X}$, as
$\|d_y - d_z\|_\infty = d(y,z)$. Equation~\eqref{eq:fdf} will be used 
frequently. It shows that a function $f \in \Dl{X}$ corresponds, after 
identification of $X$ with $\e(X)$, to the restriction of a distance 
function to a point in $\Dl{X}$, namely $f$ itself. 

To prove that $(\e,\E{X})$ is an injective hull of $X$
we shall make use of the following important fact.

\begin{Prop} \label{Prop:retr}
For every metric space $X$ there exists a map 
$p \colon \D{X} \to \E{X}$ such that 
\begin{enumerate}
\item[\rm (1)]
$p(f) \le f$ for all $f \in \D{X}$, hence $p(f) = f$ for all $f \in \E{X}$;
\item[\rm (2)]
$\|p(f) - p(g)\|_\infty \le \|f - g\|_\infty$ for all $f,g \in \D{X}$.
\end{enumerate}
\end{Prop}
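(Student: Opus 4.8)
The plan is to realize $p$ as the limit of an iterated non-expansive operator whose fixed points are exactly the extremal functions. First I introduce the operator $T \colon \D{X} \to \D{X}$ given by $(Tf)(x) = \sup_{y \in X}(d(x,y) - f(y))$. For $f \in \D{X}$ the defining inequality gives $Tf(x) \le f(x)$, while taking $y = x$ gives $Tf(x) \ge -f(x)$, so $Tf$ is finite; and by~\eqref{eq:extr-sup} the fixed points of $T$ are precisely the members of $\E{X}$. Since $Tf(x) - Tg(x)$ is a difference of suprema of two families differing by $f(y) - g(y)$, the map $T$ is non-expansive for $\|\cdot\|_\infty$.

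The key device is the averaged operator $S := \frac{1}{2}(\id + T)$. The crucial point is that $S$ maps $\D{X}$ into itself: for $f \in \D{X}$ and $x,x' \in X$, taking $y = x'$ and $y = x$ in the two suprema yields $Tf(x) \ge d(x,x') - f(x')$ and $Tf(x') \ge d(x,x') - f(x)$, whence $Sf(x) + Sf(x') \ge \frac{1}{2}(f(x)+f(x')) + \frac{1}{2}(2d(x,x') - f(x) - f(x')) = d(x,x')$; the weight $\frac{1}{2}$ is exactly what makes this come out even. Moreover $Sf \le f$ (because $Tf \le f$), the operator $S$ is non-expansive as an average of non-expansive maps, and $Sf = f$ if and only if $Tf = f$, i.e.\ if and only if $f \in \E{X}$.

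Now I set $f_0 := f$ and $f_{n+1} := Sf_n$. Since $S$ preserves $\D{X}$ and $Sg \le g$ for every $g \in \D{X}$, the sequence $(f_n)$ is pointwise decreasing; being bounded below by $0$, it converges pointwise to a function $p(f) \ge 0$, which lies in $\D{X}$ because $\D{X}$ is closed under pointwise limits. To see that $p(f) \in \E{X}$ I verify $T(p(f)) = p(f)$ by a squeeze. On one hand $Tf_n = 2f_{n+1} - f_n \to p(f)$ pointwise, and $f_n \ge p(f)$ forces $Tf_n \le T(p(f))$, so $p(f) \le T(p(f))$. On the other hand, for each fixed $y$ one has $d(x,y) - p(f)(y) = \lim_n(d(x,y) - f_n(y)) \le \lim_n Tf_n(x) = p(f)(x)$, and taking the supremum over $y$ gives $T(p(f)) \le p(f)$. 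Hence $p(f)$ is extremal. Property~(1) holds because $(f_n)$ decreases from $f$, and because $S$ fixes $\E{X}$ pointwise, so does $p$; property~(2) follows since each $S^n$ is non-expansive, giving $|f_n(x) - g_n(x)| \le \|f - g\|_\infty$ for all $n$ and $x$, which passes to the limit.

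The step requiring the most care is the passage to the limit in $T(p(f)) = p(f)$: the convergence $f_n \to p(f)$ need not be uniform, so one cannot simply invoke continuity of $T$, and the two-sided squeeze above is designed precisely to circumvent this. The other nonroutine point is the closed verification that $S$ preserves $\D{X}$, where the precise choice of the coefficient $\frac{1}{2}$ is essential.
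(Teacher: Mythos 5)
Your proposal is correct and takes essentially the same approach as the paper: your operator $T$ is the paper's $f \mapsto f^*$, your averaged map $S = \frac12(\id + T)$ is the paper's $q$, and the iteration with pointwise limit is identical. Even your squeeze argument that the limit is extremal is the paper's argument in disguise, since the identity $Tf_n = 2f_{n+1} - f_n$ together with the antitonicity of $T$ is exactly the paper's inequality $0 \le p(f) - p(f)^* \le 2(q^n(f) - q^{n+1}(f))$.
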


In~(2) the right side is possibly infinite, but it is finite
if $f,g \in \Dl{X}$, thus the restriction of $p$ to $\Dl{X}$ is a 
$1$-Lipschitz retraction onto $\E{X}$. The existence of such a map $p$
could be shown by means of Zorn's Lemma, however 
Dress~\cite[Section~(1.9)]{Dre} (compare~\cite[Lemma~5.3]{ChrL}) 
also found the following construction, 
which is canonical in the sense that no choices need to be made.

\begin{proof}
For every $f \in \D{X}$, define $f^* \in \R^X$ such that 
\[
f^*(x) = \sup_{z \in X}(d(x,z) - f(z))
\] 
for all $x \in X$. Clearly $f^* \le f$, and equality holds
if and only if $f \in \E{X}$, by~\eqref{eq:extr-sup}. 
For every pair of points $x,y \in X$, the definition of $f^*$ gives 
$f^*(x) + f(y) \ge d(x,y)$ and $f(x) + f^*(y) \ge d(x,y)$.
It follows that the function
\[
q(f) := \frac12(f+f^*) 
\]
belongs to $\D{X}$, and $q(f) \le f$. For all $f,g \in \D{X}$ and $x \in X$,
\[
g^*(x) = \sup_{z \in X}(d(x,z) - f(z) + f(z) - g(z)) 
\le f^*(x) + \|f - g\|_\infty,
\] 
hence $\|f^* - g^*\|_\infty \le \|f - g\|_\infty$ and thus
\[
\|q(f) - q(g)\|_\infty 
\le \frac12 \|f - g\|_\infty + \frac12 \|f^* - g^*\|_\infty
\le \|f - g\|_\infty.
\]
Iterating the map $q$, we obtain for every $f \in \D{X}$ a sequence 
of functions $q(f) \ge q^2(f) \ge q^3(f) \ge \ldots$ 
in $\D{X}$, then we define $p(f)$ as the pointwise limit.
Clearly $p(f) \in \D{X}$, and~(1) and~(2) hold. For all $n \ge 1$,
$p(f) \le q^n(f)$ and hence $p(f)^* \ge q^n(f)^*$, so
\[
0 \le p(f) - p(f)^* \le q^n(f) - q^n(f)^* = 2(q^n(f) - q^{n+1}(f)).
\]
As $n \to \infty$, the last term converges pointwise to $0$,
thus $p(f)^* = p(f)$ and therefore $p(f) \in \E{X}$.
\end{proof}

Now, since $\E{X}$ is a $1$-Lipschitz retract of $\Dl{X}$, to prove
the injectivity of $\E{X}$ it remains to show that $\Dl{X}$ is injective.
A simple component-wise extension procedure applies, like for $l_\infty(I)$.

\begin{Prop} \label{Prop:dl-inj}
For every metric space $X$ the metric spaces $\Dl{X}$ and $\E{X}$ 
are injective.
\end{Prop}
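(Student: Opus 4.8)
The plan is to prove that $\Dl{X}$ is injective by exhibiting an explicit component-wise extension operator, exactly as was done for $l_\infty(I)$ and in Proposition~\ref{Prop:p-inj}; the injectivity of $\E{X}$ then follows immediately, since by Proposition~\ref{Prop:retr} the map $p$ restricts to a $1$-Lipschitz retraction of $\Dl{X}$ onto $\E{X}$, and a $1$-Lipschitz retract of an injective space is injective (this is the content of Proposition~\ref{Prop:alr}). So the entire burden is on $\Dl{X}$.

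To that end, let $B$ be a metric space, $\es \ne A \sub B$, and let $f \in \Lip_1(A,\Dl{X})$ be given. Writing $f(a) = (f(a)(x))_{x \in X}$, I view $f$ coordinate-wise: for each fixed $x \in X$ the assignment $a \mapsto f(a)(x)$ is a real valued $1$-Lipschitz function on $A$, since $|f(a)(x) - f(a')(x)| \le \|f(a) - f(a')\|_\infty \le d(a,a')$. The natural candidate for the extension is the coordinate-wise least extension from~\eqref{eq:least-ext}: first I would set, for $b \in B$ and $x \in X$,
\[
\ol f(b)(x) := \sup_{a \in A}\bigl(f(a)(x) - d(a,b)\bigr),
\]
and define $\ol f(b)$ to be this function of $x$. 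Each coordinate is a $1$-Lipschitz extension in the $B$-variable, so as in the $l_\infty(I)$ case $\ol f$ is automatically $1$-Lipschitz as a map $B \to l_\infty(X)$, and it agrees with $f$ on $A$. It remains only to verify that $\ol f(b)$ actually lands in $\Dl{X}$, that is, that it satisfies both defining conditions of $\Dl{X}$: the inequality $\ol f(b)(x) + \ol f(b)(y) \ge d(x,y)$ for all $x,y \in X$, and the $1$-Lipschitz condition $|\ol f(b)(x) - \ol f(b)(y)| \le d(x,y)$.

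These two checks are the heart of the argument and are routine. For the $\D{X}$-condition I would pick $a, a' \in A$ nearly realizing the two suprema; then
\[
\ol f(b)(x) + \ol f(b)(y) \ge f(a)(x) + f(a')(y) - d(a,b) - d(a',b),
\]
and I would bound $d(a,b) + d(a',b) \ge d(a,a') \ge \|f(a) - f(a')\|_\infty \ge |f(a)(x) - f(a')(x)|$ (or the analogous $y$-inequality) and combine with $f(a')(x) + f(a')(y) \ge d(x,y)$, which holds because $f(a') \in \Dl{X}$; letting $a, a'$ approach the suprema gives the claim. The $1$-Lipschitz condition for $\ol f(b)$ is even simpler: it is inherited coordinate-by-coordinate from the fact that each $f(a)$ is $1$-Lipschitz on $X$, since a supremum of $1$-Lipschitz functions (in $x$) with a uniform modulus is again $1$-Lipschitz. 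The only mild subtlety to watch is the finiteness of the suprema, but this is guaranteed by~\eqref{eq:fdf}: since every $f(a) \in \Dl{X}$ we have the uniform bound $\ol f(b)(x) \le f(a_0)(x) + d(a_0, b)$ for any fixed $a_0 \in A$, so $\ol f(b)$ differs from a bounded distance from the genuine element $f(a_0) \in \Dl{X}$ and hence lies in $l_\infty(X)$. I do not anticipate a genuine obstacle here; the main point is simply to confirm that the explicit least-extension formula respects the two linear (and two-variable) constraints cutting out $\Dl{X}$ inside $l_\infty(X)$, which is precisely the kind of structure handled by Proposition~\ref{Prop:p-inj}.
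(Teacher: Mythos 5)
Your overall strategy---reduce everything to $\Dl{X}$ via the retraction $p$ of Proposition~\ref{Prop:retr} and then extend coordinate-wise---is exactly the paper's, but your choice of coordinate-wise extension is the wrong one, and the verification you sketch does not go through. The least extension $\ol f(b)(x) = \sup_{a \in A}(f(a)(x) - d(a,b))$ pushes values \emph{down}, while the constraint cutting out $\D{X}$, namely $g(x)+g(y) \ge d(x,y)$, is a \emph{lower} bound; these are incompatible. Concretely, take $X=\{x,y\}$ with $d(x,y)=2$, $B=\{a,b\}$ with $d(a,b)=R$ large, $A=\{a\}$, and $f(a)=d_x$. Then $\ol f(b)(x) = -R$ and $\ol f(b)(y) = 2-R$, so $\ol f(b)(x)+\ol f(b)(y) = 2-2R < d(x,y)$ and $\ol f(b) \notin \D{X}$. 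The flaw in your verification is the step where you invoke $d(a,b)+d(a',b) \ge d(a,a') \ge |f(a)(x)-f(a')(x)|$: this gives an \emph{upper} bound on $-(d(a,b)+d(a',b))$, whereas you need a lower bound on that (negative) term to conclude $\ol f(b)(x)+\ol f(b)(y) \ge d(x,y)$; the inequality points the wrong way and cannot be repaired for the sup-extension.

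The fix is to use the \emph{greatest} $1$-Lipschitz extension instead, which is what the paper does: set $\bar f_b(x) := \inf_{a \in A}(f_a(x) + d(a,b))$. Then, using $f_{a'}(y) + d(a,a') \ge f_a(y)$ (which follows from $\|f_a - f_{a'}\|_\infty \le d(a,a')$), one gets
\begin{align*}
\bar f_b(x) + \bar f_b(y)
&\ge \inf_{a,a' \in A}\bigl(f_a(x) + f_{a'}(y) + d(a,a')\bigr)
\ge \inf_{a \in A}\bigl(f_a(x) + f_a(y)\bigr) \ge d(x,y),
\end{align*}
and the $1$-Lipschitz conditions in $x$ and in $b$, as well as $\bar f_b = f_b$ for $b \in A$, are checked as you indicate. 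Alternatively, if you insist on invoking Proposition~\ref{Prop:p-inj} (after translating $\Dl{X}$ by some $h \in \Dl{X}$ so that it sits in $l_\infty(X)$), note that the extension operator built there is not the plain least extension but the symmetric combination of the smallest and largest $1$-Lipschitz extensions; that symmetry is precisely what lets it respect two-variable constraints of either sign.
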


\begin{proof}
As just mentioned, in view of Proposition~\ref{Prop:retr} it suffices
to prove the result for $\Dl{X}$. We could embed $\Dl{X}$
into $l_\infty(X)$ via $f \mapsto f-h$ for some fixed $h \in \Dl{X}$
and then refer to Proposition~\ref{Prop:p-inj}, 
but the following argument is slightly more direct.
Let $B$ be a metric space, $\es \ne A \sub B$, and let 
$F \colon A \to \Dl{X}$ be a $1$-Lipschitz map, $F \colon a \mapsto f_a$.
For $b \in B$, put
\[
\bar f_b(x) := \inf_{a \in A}(f_a(x) + d(a,b))
\]
for all $x \in X$. Clearly $\bar f_b$ is a non-negative $1$-Lipschitz 
function on $X$, as the infimum of a family of such.
For $a,a' \in A$ and $y \in X$, we have 
$f_a(y) - f_{a'}(y) \le \|f_a - f_{a'}\|_\infty 
= \|F(a) - F(a')\|_\infty \le d(a,a')$ and so
\begin{align*}
\bar f_b(x) + \bar f_b(y) 
&\ge \inf_{a,a' \in A} (f_a(x) + f_{a'}(y) + d(a,a')) \\
&\ge \inf_{a \in A}(f_a(x) + f_a(y)) \\
&\ge d(x,y). 
\end{align*}
This shows that $\bar f_b \in \Dl{X}$. For $b,b' \in B$ and $x \in X$,
\[
\bar f_b(x) - d(b,b') 
= \inf_{a \in A}(f_a(x) + d(a,b) - d(b,b'))
\le \bar f_{b'}(x),
\]
hence $\|\bar f_b - \bar f_{b'}\|_\infty \le d(b,b')$.
If $b \in A$, then $\bar f_b(x) \le f_b(x)$ 
and $f_b(x) \le f_a(x) + \|f_a - f_b\|_\infty \le f_a(x) + d(a,b)$ for 
all $x \in X$ and $a \in A$, so that $\bar f_b = f_b$.
Thus $\ol F \colon b \mapsto \bar f_b$ is a $1$-Lipschitz extension
of $F$.
\end{proof}

If $X$ is finite, so that the supremum norm gives a metric on $\D{X}$,
the same argument also shows that $\D{X}$ is injective. 

We now state Isbell's result about $\E{X}$. For brevity,
isometric embeddings will just be called {\em embeddings}.
An embedding $i$ of $X$ into some metric space $Y$ is called
{\em essential} if for every metric space $Z$ and every $1$-Lipschitz map 
$h \colon Y \to Z$ with the property that $h \circ i \colon X \to Z$ is an 
embedding, $h$ is an embedding as well. 
If $i \colon X \to Y$ is essential and $Y$ is injective, 
then $(i,Y)$ is an {\em injective hull} of $X$; 
see~\cite[Section~9]{AdaHS}. In the terminology of~\cite{Dre}, 
an essential extension $(i,Y)$ of $X$ is called a {\em tight extension}
(and $\D{X}$ and $\E{X}$ are denoted $P_X$ and $T_X$, respectively). 

\begin{Thm} \label{Thm:isbell}
For every metric space $X$, the following hold:
\begin{enumerate}
\item[\rm (1)] 
If $L \colon \E{X} \to \E{X}$ is a $1$-Lipschitz map that fixes $\e(X)$ 
pointwise, then $L$ is the identity on $\E{X}$;
\item[\rm (2)]
$(\e,\E{X})$ is an injective hull of $X$;
\item[\rm (3)]
if $(i,Y)$ is another injective hull of $X$, then there exists 
a unique isometry $I \colon \E{X} \to Y$ with the property that 
$I \circ \e = i$.
\end{enumerate}
\end{Thm}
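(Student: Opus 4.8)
The three parts build on one another, so I would prove them in order, treating part~(1) as the key lemma. For part~(1), the plan is to exploit the identity~\eqref{eq:fdf}, which tells us that every $f \in \E{X}$ satisfies $\|f - \e(y)\|_\infty = f(y)$ for all $y \in X$. Given a $1$-Lipschitz map $L$ fixing $\e(X)$ pointwise and any $f \in \E{X}$, I would compute for each $y \in X$
\begin{equation*}
L(f)(y) = \|L(f) - \e(y)\|_\infty = \|L(f) - L(\e(y))\|_\infty \le \|f - \e(y)\|_\infty = f(y),
\end{equation*}
applying~\eqref{eq:fdf} to $L(f) \in \E{X}$, then the hypothesis $L(\e(y)) = \e(y)$, then the $1$-Lipschitz property, and finally~\eqref{eq:fdf} to $f$. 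Hence $L(f) \le f$ pointwise; since $L(f) \in \E{X} \sub \D{X}$ and $f$ is a minimal element of $\D{X}$, minimality forces $L(f) = f$, so $L = \id$.

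For part~(2), the space $\E{X}$ is already injective by Proposition~\ref{Prop:dl-inj}, so it remains to show that $\e$ is essential, and the plan is to reduce this to part~(1). Let $h \colon \E{X} \to Z$ be $1$-Lipschitz with $h \circ \e$ an embedding. I would embed $Z$ isometrically into an injective space $W$ (for instance $W = l_\infty(Z)$ via the Kuratowski map) by some $\iota$, so that $\psi := \iota \circ h \circ \e \colon X \to W$ is an embedding. The assignment $s(\psi(x)) = \e(x)$ then defines an isometry $\psi(X) \to \e(X)$, which is $1$-Lipschitz and, by injectivity of $\E{X}$, extends to a $1$-Lipschitz map $r \colon W \to \E{X}$. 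Now $L := r \circ \iota \circ h \colon \E{X} \to \E{X}$ is $1$-Lipschitz and fixes $\e(X)$ pointwise, whence $L = \id$ by part~(1). Reading off the resulting chain $\|f-g\|_\infty = \|L(f)-L(g)\|_\infty \le \|\iota(h(f)) - \iota(h(g))\|_\infty \le \|f-g\|_\infty$ and using that $\iota$ is isometric forces $d_Z(h(f),h(g)) = \|f-g\|_\infty$, so $h$ is an embedding, as required.

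For part~(3), existence of $I$ comes from injectivity: extend the isometry $i \circ \e^{-1} \colon \e(X) \to Y$ to a $1$-Lipschitz map $I \colon \E{X} \to Y$ (using that $Y$ is injective) and, symmetrically, extend $\e \circ i^{-1}$ to a $1$-Lipschitz $J \colon Y \to \E{X}$. Then $J \circ I$ fixes $\e(X)$, so $J \circ I = \id$ by part~(1); this already makes $I$ an isometric embedding with $I \circ \e = i$. To get surjectivity I would invoke essentiality of $i$: the map $r' := I \circ J \colon Y \to Y$ has image $I(\E{X})$, is a retraction onto it, and satisfies $r' \circ i = i$, hence is an embedding; but an isometric self-embedding of $Y$ that fixes $I(\E{X})$ pointwise must satisfy $d(y,r'(y)) = d(r'(y),r'(r'(y))) = 0$ for every $y$, so $r' = \id$ and $I$ is onto. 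Thus $I$ is an isometry, and uniqueness again reduces to part~(1): if $I,I'$ both satisfy $I \circ \e = i = I' \circ \e$, then $I^{-1} \circ I'$ fixes $\e(X)$, so it is the identity.

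The main obstacle I anticipate is part~(2): setting up the auxiliary injective space $W$ and the retraction $r$ exactly so that the composite $r \circ \iota \circ h$ lands in $\E{X}$ and fixes $\e(X)$, which is what lets part~(1) do the work. The surjectivity step in part~(3) is the other delicate point, since there one must turn essentiality of $i$ into an actual equality $I(\E{X}) = Y$ rather than a mere isometric embedding.
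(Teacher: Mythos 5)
Your proposal is correct and follows essentially the same route as the paper: part~(1) via~\eqref{eq:fdf} and minimality, part~(2) by extending a map back to $\E{X}$ so that part~(1) applies, and part~(3) by extending $i$ and its inverse on $\e(X)$ in both directions. The only cosmetic differences are that in part~(2) your detour through $W = l_\infty(Z)$ is unnecessary (injectivity of $\E{X}$ already lets you extend $h(\e(x)) \mapsto \e(x)$ from the subset $h(\e(X))$ of $Z$ directly), and in part~(3) the paper deduces surjectivity by applying essentiality of $i$ to $J$ itself rather than to $I \circ J$.
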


\begin{proof}
For~(1) we use~\eqref{eq:fdf}. The map $L$ takes 
$f \in \E{X}$ to some $g \in \E{X}$ such that 
\[
g(x) = \|g - d_x\|_\infty 
= \|L(f) - L(d_x)\|_\infty \le \|f - d_x\|_\infty = f(x)
\]
for all $x \in X$, so $g = f$ by the minimality of $f$.

By Proposition~\ref{Prop:dl-inj}, $\E{X}$ is injective, so for~(2) 
it remains to show that $\e$ is essential. 
Suppose $h \colon \E{X} \to Z$ is $1$-Lipschitz and 
$h \circ \e \colon X \to Z$ is an embedding.
Since $\E{X}$ is injective, $\e \colon X \to \E{X}$ extends to a $1$-Lipschitz 
map $\ol\e \colon Z \to \E{X}$, thus $\ol\e \circ h \circ \e = \e$.
The map $L := \ol\e \circ h$ is $1$-Lipschitz and fixes $\e(X)$ pointwise, 
so $L$ is the identity on $\E{X}$ by~(1). 
As both $h$ and $\ol\e$ are $1$-Lipschitz, $h$ is in fact an 
embedding.  

As for~(3), if $(i,Y)$ is another injective hull of $X$, then $i$ extends to
a $1$-Lipschitz map $I \colon \E{X} \to Y$, so $I \circ \e = i$.
Likewise, there is a $1$-Lipschitz map $\ol\e \colon Y \to \E{X}$ with
$\ol\e \circ i = \e$. Since $i$ is essential, $\ol\e$ is an embedding; 
furthermore, $\ol\e \circ I \circ \e = \e$, thus $\ol\e \circ I = \id_{\E{X}}$ 
by~(1). Hence $\ol\e$ is an isometry onto $\E{X}$, and $I$ is its inverse.
\end{proof}

Injective hulls can be characterized in a number of 
different ways. We just state the following proposition, 
which is independent of the construction described above, except 
that the proof relies on the existence of {\em some} injective hull
of $X$. For the details we refer again to the general discussion
in~\cite[Section~9]{AdaHS}.

\begin{Prop}
Let $X$ and $Y$ be metric spaces, and let $i \colon X \to Y$ be an embedding.
Then the following are equivalent:
\begin{enumerate}
\item[\rm (1)]
$(i,Y)$ is an injective hull of $X$, that is, $i$ is essential and $Y$ 
is injective;
\item[\rm (2)]
$(i,Y)$ is a maximal essential extension of $X$, that is,
$i$ is essential and $Y$ has no proper essential extension;
\item[\rm (3)]
$(i,Y)$ is a minimal injective extension of $X$, that is,
$Y$ is injective and no proper subspace of $Y$ 
containing $i(X)$ is injective;
\item[\rm (4)]
$(i,Y)$ is a smallest injective extension of $X$, that is,
$Y$ is injective and whenever $j \colon X \to Z$ is an embedding 
into some injective metric space $Z$, there is an embedding 
$h \colon Y \to Z$ such that $h \circ i = j$.
\end{enumerate}
\end{Prop}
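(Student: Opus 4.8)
The plan is to use the canonically constructed injective hull $(\e,\E{X})$ from Theorem~\ref{Thm:isbell} as a fixed reference and to show that each of (2), (3), (4) is equivalent to (1); the engine driving all four equivalences is the rigidity statement Theorem~\ref{Thm:isbell}(1), that any $1$-Lipschitz self-map of $\E{X}$ fixing $\e(X)$ pointwise is the identity. First I would record a structural lemma valid for an arbitrary embedding $i\colon X\to Y$ with $Y$ injective. Injectivity of $\E{X}$ extends $\e\circ i^{-1}\colon i(X)\to\E{X}$ to a $1$-Lipschitz map $g\colon Y\to\E{X}$ with $g\circ i=\e$, and injectivity of $Y$ extends $i\circ\e^{-1}\colon\e(X)\to Y$ to a $1$-Lipschitz map $f\colon\E{X}\to Y$ with $f\circ\e=i$. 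Since $g\circ f$ fixes $\e(X)$, it equals $\id_{\E{X}}$; hence $f$ is an embedding onto a subspace $Y_0:=f(\E{X})$ with $i(X)\sub Y_0$ and $Y_0$ isometric to $\E{X}$, and $f\circ g$ is a $1$-Lipschitz retraction of $Y$ onto $Y_0$.

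For (1)$\Rightarrow$(4), given an embedding $j\colon X\to Z$ into an injective $Z$, I extend $j\circ i^{-1}$ to a $1$-Lipschitz $h\colon Y\to Z$ with $h\circ i=j$; since $i$ is essential and $h\circ i$ is an embedding, $h$ is an embedding. For (4)$\Rightarrow$(1), I apply (4) with $Z=\E{X}$ and $j=\e$ to obtain an embedding $h\colon Y\to\E{X}$ with $h\circ i=\e$; composing with the lemma's map $f$, the self-map $h\circ f$ fixes $\e(X)$, so $h\circ f=\id_{\E{X}}$, whence $h$ is surjective, i.e. an isometry compatible with the embeddings. For (1)$\Rightarrow$(3), if $Y'\sub Y$ is injective and contains $i(X)$, I extend $\id_{Y'}$ to a retraction $r\colon Y\to Y'$; the composite of $r$ with the inclusion $Y'\hookrightarrow Y$ is a $1$-Lipschitz self-map of $Y$ fixing $i(X)$, hence an embedding by essentiality, and being idempotent and injective it is the identity, forcing $Y'=Y$. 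For (3)$\Rightarrow$(1), the subspace $Y_0$ from the lemma is injective and contains $i(X)$, so minimality gives $Y_0=Y$ and $g$ becomes an isometry $Y\to\E{X}$ with $g\circ i=\e$.

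For (1)$\Leftrightarrow$(2), in one direction I take any essential extension $(k,W)$ of $Y$; since $Y$ is injective I extend $\id_Y$ to a retraction $r\colon W\to Y$ with $r\circ k=\id_Y$, and essentiality of $k$ forces $r$ to be an embedding, hence an isometry, so $k$ is surjective and the extension is not proper. Conversely, injectivity of $\E{X}$ yields a $1$-Lipschitz $k\colon Y\to\E{X}$ with $k\circ i=\e$, which is an embedding because $i$ is essential; moreover $(k,\E{X})$ is itself an essential extension of $Y$, since for $1$-Lipschitz $h$ with $h\circ k$ an embedding one has $h\circ\e=(h\circ k)\circ i$ an embedding, so $h$ is an embedding by essentiality of $\e$. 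Maximality then forces $k$ to be surjective, i.e. an isometry onto the injective space $\E{X}$. In every instance where I obtain a compatible isometry $Y\cong\E{X}$, the injective-hull property of $(i,Y)$ follows by transporting essentiality of $\e$ through the isometry.

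The main obstacle is purely organizational: arranging matters so that Theorem~\ref{Thm:isbell}(1) can be invoked at exactly the right point in each direction, which the structural lemma makes uniform. The genuinely delicate implications are (4)$\Rightarrow$(1) and (2)$\Rightarrow$(1), where I must produce an honest \emph{isometry} onto $\E{X}$ rather than a mere embedding; in both cases the device is to pair the map furnished by the hypothesis with the reference map $f$ (respectively, to exploit that $\e$ is essential) and then collapse a composite to the identity via the rigidity of $\E{X}$. Beyond this I expect no substantive difficulty: the remaining checks, that a surjective isometric embedding is an isometry and that essentiality transports along a compatible isometry, are routine.
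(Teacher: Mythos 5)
Your proof is correct. Note that the paper itself gives no argument for this proposition: it defers to the general categorical discussion in \cite[Section~9]{AdaHS}, remarking only that any proof must rely on the existence of \emph{some} injective hull of $X$. That is exactly the ingredient you use, so your write-up is a faithful, self-contained realization of the strategy the paper gestures at. I checked the individual steps: the structural lemma is sound (the composite $g\circ f$ fixes $\e(X)$ pointwise, so Theorem~\ref{Thm:isbell}(1) collapses it to the identity, making $f$ an embedding and $f\circ g$ a $1$-Lipschitz retraction onto $Y_0\cong\E{X}$); the idempotence argument in (1)$\Rightarrow$(3) correctly forces $\iota\circ r=\id_Y$ from injectivity of the embedding $\iota\circ r$; in (2)$\Rightarrow$(1) you correctly avoid the lemma (which needs $Y$ injective) and instead manufacture the essential extension $(k,\E{X})$ of $Y$ directly from essentiality of $i$ and of $\e$; and in (4)$\Rightarrow$(1) and (2)$\Rightarrow$(1) you do produce genuine surjections, not mere embeddings, before transporting essentiality along the resulting isometry. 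The only presentational suggestion is to state explicitly, once, that a surjective isometric embedding is an isometry and that essentiality is preserved under composition with a compatible isometry, since you invoke these facts in several of the six implications.
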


In fact, (3)~is the definition of injective hulls adopted 
by Isbell in~\cite{Isb}, and~(2) corresponds to the notion
of {\em tight span} introduced by Dress~\cite{Dre}.
In the introduction we used property~(4),
a concrete instance of which is given in the next result
(compare \cite[Section~(1.11)]{Dre}).

\begin{Prop} \label{Prop:x-subspace}
Let $X$ be a subspace of the metric space $X'$. Then:
\begin{enumerate}
\item[\rm (1)]
There exists an isometric embedding $h \colon \E{X} \to \E{X'}$ 
such that $h(f)|_X = f$ for every $f \in \E{X}$. 
\item[\rm (2)]
For every pair of functions $g \in \E{X}$ and $f' \in \E{X'}$ there 
exists $g' \in \E{X'}$ such that $g'|_X = g$ and 
$\|g' - f'\|_\infty = \|g - f'|_X\|_\infty$.
\end{enumerate}
\end{Prop}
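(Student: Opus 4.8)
The plan is to deduce both parts from the canonical $1$-Lipschitz retraction $p \colon \D{X'} \to \E{X'}$ of Proposition~\ref{Prop:retr}, applied to suitable explicit extensions of functions from $X$ to $X'$. For a $1$-Lipschitz $f \in \Dl{X}$ I will use the inf-convolution extension $\tilde f \colon X' \to \R$, $\tilde f(x') := \inf_{x \in X}(f(x) + d(x',x))$. A direct check gives $\tilde f|_X = f$ (since $f$ is $1$-Lipschitz), that $\tilde f$ is $1$-Lipschitz, and, using $f(x) + f(y) \ge d(x,y)$ together with $d(x',x) + d(x,y) + d(y,y') \ge d(x',y')$, that $\tilde f \in \D{X'}$. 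Moreover $f \mapsto \tilde f$ preserves the supremum distance, since inf-convolution is $1$-Lipschitz in the function and restriction to $X$ supplies the reverse bound. The one recurring subtlety throughout is that $p$ only decreases functions ($p(F) \le F$), so I must argue separately that passing to $p(\cdot)$ does not shrink the values on $X$; this is exactly where minimality of extremal functions will be invoked.

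For (1) I would set $h(f) := p(\tilde f) \in \E{X'}$ for $f \in \E{X}$. From $p(\tilde f) \le \tilde f$ one gets $p(\tilde f)|_X \le \tilde f|_X = f$; as $p(\tilde f) \in \D{X'}$ its restriction lies in $\D{X}$, so minimality of $f$ forces $p(\tilde f)|_X = f$, which is the required identity $h(f)|_X = f$. For isometry I would estimate $\|h(f) - h(g)\|_\infty = \|p(\tilde f) - p(\tilde g)\|_\infty \le \|\tilde f - \tilde g\|_\infty = \|f - g\|_\infty$ via Proposition~\ref{Prop:retr}(2), while restricting to $X$ and using $h(f)|_X = f$, $h(g)|_X = g$ gives the reverse inequality; hence $h$ is an isometric embedding.

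For (2), set $r := \|g - f'|_X\|_\infty$ (finite, as $g, f'|_X \in \Dl{X}$). The heart of the matter is to produce a single function $G \in \D{X'}$ with $G|_X = g$ and $\|G - f'\|_\infty \le r$; then $g' := p(G)$ will do the job. Indeed $\|g' - f'\|_\infty = \|p(G) - p(f')\|_\infty \le \|G - f'\|_\infty \le r$ (using $p(f') = f'$); the matching lower bound $\|g' - f'\|_\infty \ge \|g'|_X - f'|_X\|_\infty = r$ then comes from restriction once $g'|_X = g$ is known, and $g'|_X = g$ follows as before from $p(G) \le G$, $G|_X = g$, and minimality of $g$. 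I would take $G := \min\{\tilde g,\, f' + r\}$. By construction $G \le f' + r$, and from $g \ge f'|_X - r$ one obtains $\tilde g \ge f' - r$ on all of $X'$ (because $\inf_{x}(f'(x) + d(\cdot,x)) \ge f'$), whence $G \ge f' - r$ and $\|G - f'\|_\infty \le r$; on $X$ we have $\tilde g = g \le f' + r$, giving $G|_X = g$.

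The main obstacle is verifying $G = \min\{\tilde g, f' + r\} \in \D{X'}$, since minima need not preserve membership in $\D{X'}$. Here I would check all four mixed sums: $\tilde g \in \D{X'}$ and $f' + r \in \D{X'}$ settle the pure terms, and the crossed term reduces to $\tilde g(x') + (f' + r)(y') \ge d(x',y')$, which follows from $\tilde g(x') \ge f'(x') - r$ together with $f'(x') + f'(y') \ge d(x',y')$. Since $\min\{\tilde g, f'+r\}(x') + \min\{\tilde g, f'+r\}(y')$ equals the least of these four sums, each of which is at least $d(x',y')$, one concludes $G \in \D{X'}$. I emphasize that this argument uses only the retraction $p$ and the minimality of extremal functions, avoiding Proposition~\ref{Prop:hyperconvex} entirely, in accordance with the earlier remark that hyperconvexity is reserved for the later fixed-point result.
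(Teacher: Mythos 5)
Your proposal is correct and follows essentially the same route as the paper: part (1) is verbatim Isbell--Dress's argument (inf-convolution extension, apply $p$, use minimality for $h(f)|_X = f$, Lipschitzness of $p$ plus restriction for the isometry). In part (2) the paper uses a simpler majorant --- it defines $\tilde g$ to equal $g$ on $X$ and $f' + \nu$ on $X' \sm X$, verifies $\tilde g \in \D{X'}$ in one line from $g \ge f'|_X - \nu$, and then derives the lower bound $g' \ge f' - \nu$ from the extremality formula~\eqref{eq:extr-sup} rather than from the Lipschitz property of $p$; your choice $G = \min\{\tilde g, f'+r\}$ and the four-sum verification that $G \in \D{X'}$ are correct but do a bit more work for the same conclusion.
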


\begin{proof}
For $f \in \E{X}$, let first $\ol f \colon X' \to \R$
be the $1$-Lipschitz extension defined by 
\[
\ol f(y) := \inf_{x \in X}(f(x) + d(x,y)).
\]
Clearly $\ol f \in \D{X'}$. Now put 
$h(f) := p(\ol f) \in \E{X'}$,
where $p$ is as in Proposition~\ref{Prop:retr}. We have $h(f)|_X 
= p(\ol f)|_X \le \ol f|_X = f$; since $h(f)|_X \in \D{X}$, 
this gives $h(f)|_X = f$ by the minimality of $f$.
For $f,g \in \E{X}$ and $y \in X'$,
\[
\ol f(y) - \|f - g\|_\infty 
= \inf_{x \in X}(f(x) - \|f - g\|_\infty + d(x,y))
\le \ol g(y),
\]
hence $\|h(f) - h(g)\|_\infty
= \|p(\ol f) - p(\ol g)\|_\infty
\le \|\ol f - \ol g\|_\infty = \|f - g\|_\infty$.
This yields~(1).

As for~(2), suppose that $\nu := \|g - f'|_X\|_\infty < \infty$.
Define $\tilde g \colon X' \to \R$ such that $\tilde g|_X = g$ and 
$\tilde g(y) = f'(y) + \nu$ for all $y \in X' \sm X$.
Since $\tilde g(x) = g(x) \ge f'(x) - \nu$ for $x \in X$, it follows that 
$\tilde g \in \D{X'}$. Now let $g' \in \E{X'}$ be any extremal function
with $g' \le \tilde g$. Similarly as above, $g'|_X \le \tilde g|_X = g$
and thus $g'|_X = g$ by the minimality of $g$. 
Furthermore, $g' \le f' + \nu$ and hence
\[
g'(y) \ge \sup_{y' \in X'}(d(y,y') - f'(y') - \nu) = f'(y) - \nu
\]
for all $y \in X'$ by~\eqref{eq:extr-sup}. This gives the result.
\end{proof}

A number of properties of $\E{X}$ are more or less obvious from 
the construction. If $X$ is bounded, then $0 \le f \le \diam(X) := 
\sup_{x,y \in X}d(x,y)$ for all $f \in \E{X}$ by~\eqref{eq:extr-sup}, thus
\[
\diam(\E{X}) \le \diam(X).
\]
If $X$ is compact, then so is $\E{X}$, as a consequence of
the Arzel\`a-Ascoli Theorem. If $X$ is finite, $\E{X}$ is a polyhedral 
subcomplex of the boundary of the polyhedral set $\D{X} \sub \R^X$.
The faces of $\D{X}$ that belong to $\E{X}$ are exactly those whose
affine hull $H \sub \R^X$ is determined by a system of equations 
of the form $f(x_i) + f(x_j) = d(x_i,x_j)$ involving each point 
$x_i \in X$ at least once. (Note that these are precisely the bounded faces 
of $\D{X}$; compare \cite[Lemma~1]{Dre2}.) 
It follows that $\E{X}$ has dimension at most $\frac12 |X|$.
The possible combinatorial types of the injective hulls of metric spaces
up to cardinality~$5$ are depicted in~\cite[Section~(1.16)]{Dre}, 
and a classification for $6$-point metrics is given in~\cite{StuY}.
 
\begin{Rem} \label{Rem:banach}
As mentioned in Section~\ref{Sect:inj}, a normed real vector 
space is injective as a metric space if and only if it is linearly injective,
and the only $n$-dimensional example is $l^n_\infty$, up to isometric 
isomorphism.
Cohen~\cite{Coh} showed that every real or complex normed space 
has an essentially unique injective hull in the respective
linear category. Isbell~\cite{Isb2} and Rao~\cite{Rao} then proved that 
for a real normed space~$X$ the linearly injective hull is isometric 
to~$\E{X}$; an explicit description of the Banach space structure on~$\E{X}$ 
can be found in~\cite{CiaD}.
\end{Rem}

We conclude this section with some results involving isometries of $X$.
The isometry group of $X$ will be denoted by $\Isom(X)$.

\begin{Prop} \label{Prop:isometries}
Let $X$ be a metric space. Then:
\begin{enumerate}
\item[\rm (1)] 
For every $L \in \Isom(X)$ there is a unique isometry 
$\bar{L} \colon \E{X} \to \E{X}$ 
with the property that $\bar{L} \circ \e = \e \circ L$.
One has $\bar L(f) = f \circ L^{-1}$ for all $f \in \E{X}$, 
and $(L,f) \mapsto \bar L(f)$ is an action of $\Isom(X)$ on $\E{X}$.
\item[\rm (2)]
For every $L \in \Isom(X)$, the linear isomorphism $f \mapsto f \circ L^{-1}$ 
of $\R^X$ maps $\D{X}$ onto itself, and the map $p \colon \D{X} \to \E{X}$ 
constructed in the proof of Proposition~\ref{Prop:retr} has the additional 
property that $\bar{L}(p(f)) = p(f \circ L^{-1})$ for all $f \in \D{X}$.
\end{enumerate}
\end{Prop}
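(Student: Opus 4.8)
The plan is to work throughout with the explicit formula $\bar L(f) := f \circ L^{-1}$, viewed first as a map on all of $\R^X$, and to reserve the abstract uniqueness assertion in~(1) for Theorem~\ref{Thm:isbell}(1). Everything then reduces to reindexing suprema by the bijective isometry $L$.

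For~(1), I would first check that $f \mapsto f \circ L^{-1}$ restricts to a self-map of $\E{X}$. Since $L$ is an isometry, $f(L^{-1}x) + f(L^{-1}y) \ge d(L^{-1}x, L^{-1}y) = d(x,y)$ shows $f \circ L^{-1} \in \D{X}$ whenever $f \in \D{X}$; and substituting $z \mapsto Lz$ in the supremum in the extremality criterion~\eqref{eq:extr-sup} shows that $f \circ L^{-1}$ is extremal whenever $f$ is. The same substitution shows $\|f \circ L^{-1} - g \circ L^{-1}\|_\infty = \|f - g\|_\infty$, so the restriction of $\bar L$ to $\E{X}$ is an isometry. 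The intertwining $\bar L \circ \e = \e \circ L$ then reduces to the one-line identity $d_y \circ L^{-1} = d_{Ly}$, which follows from $d(L^{-1}x,y) = d(x,Ly)$. Finally, $(L_1 L_2)^{-1} = L_2^{-1} L_1^{-1}$ gives $\overline{L_1 L_2} = \bar L_1 \circ \bar L_2$ directly, yielding both the action property and the invertibility of each $\bar L$ (with inverse $\overline{L^{-1}}$).

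For uniqueness, suppose $\bar L'$ is any $1$-Lipschitz map of $\E{X}$ with $\bar L' \circ \e = \e \circ L$. Then $M := \overline{L^{-1}} \circ \bar L'$ is $1$-Lipschitz, and $M \circ \e = \overline{L^{-1}} \circ \e \circ L = \e \circ L^{-1} \circ L = \e$, so $M$ fixes $\e(X)$ pointwise; Theorem~\ref{Thm:isbell}(1) forces $M = \id$, whence $\bar L' = \bar L$. (Note this argument needs only that $\bar L'$ be $1$-Lipschitz, which is slightly stronger than what is claimed.)

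For~(2), the first assertion has already been observed: $f \mapsto f \circ L^{-1}$ is a linear bijection of $\D{X}$ onto itself. The crux is that the Dress construction of $p$ in Proposition~\ref{Prop:retr} is equivariant. The key computation is that the duality $f \mapsto f^*$ commutes with $\bar L$: reindexing the supremum in $f^*(x) = \sup_{z}(d(x,z) - f(z))$ by $z \mapsto Lz$ gives $(f \circ L^{-1})^* = f^* \circ L^{-1}$. Since $\bar L$ is linear, the averaging map $q = \tfrac12(\id + {}^*)$ also commutes with $\bar L$, hence so does every iterate $q^n$; and because $\bar L$ acts by precomposition it commutes with pointwise limits, so passing to the limit yields $p(f \circ L^{-1}) = \bar L(p(f))$. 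The whole proposition is thus a sequence of reindexing identities exploiting that $L$ is a bijective isometry; there is no genuine obstacle, and the only points demanding care are the $\bar L$-equivariance of extremality and of the $*$-duality, together with the correct invocation of Theorem~\ref{Thm:isbell}(1) for uniqueness.
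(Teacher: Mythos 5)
Your proposal is correct; all the reindexing computations go through, and part~(2) coincides with the paper's argument (the paper likewise observes that $f\mapsto f\circ L^{-1}$ commutes with $f\mapsto f^*$ and with $q$, and passes to the pointwise limit). Where you differ from the paper is in the logical organization of part~(1). The paper argues abstractly: since $\e\circ L$ is essential, $(\e\circ L,\E{X})$ is itself an injective hull of $X$, so Theorem~\ref{Thm:isbell}(3) immediately supplies both existence and uniqueness of an isometry $\bar L$ with $\bar L\circ\e=\e\circ L$; the explicit formula is then \emph{derived} a posteriori from~\eqref{eq:fdf} via $(\bar L(f))(x)=\|\bar L(f)-d_x\|_\infty=\|f-d_{L^{-1}(x)}\|_\infty=f(L^{-1}(x))$. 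You go in the opposite direction: you construct the candidate map by the formula, verify by direct substitution that it preserves $\D{X}$, extremality, and the sup-metric, and that it intertwines $\e$ with $L$, and only then invoke Theorem~\ref{Thm:isbell}(1) for uniqueness. Both routes ultimately rest on the rigidity statement Theorem~\ref{Thm:isbell}(1) (which underlies part~(3) of that theorem), so the difference is one of emphasis rather than substance. Your version has the small advantage that the verification that precomposition by $L^{-1}$ maps $\D{X}$ and $\E{X}$ onto themselves, which the paper defers to part~(2) with the remark that it is ``straightforward to check,'' is carried out once and reused; the paper's version is shorter and makes clearer that $\bar L$ exists for purely categorical reasons. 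Your parenthetical observation that uniqueness holds already among $1$-Lipschitz maps satisfying the intertwining relation is accurate and is indeed slightly stronger than what is claimed.
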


\begin{proof}
For every $L \in \Isom(X)$, $\e \circ L$ is essential and so 
$(\e \circ L,\E{X})$ is an injective hull of $X$.
Hence, by part~(3) of Theorem~\ref{Thm:isbell}, there is a unique isometry
$\bar L \colon \E{X} \to \E{X}$ such that $\bar L \circ \e = \e \circ L$. 
If $f \in \E{X}$ and $x \in X$, then
\begin{align*}
(\bar{L}(f))(x) 
&= \|\bar{L}(f) - d_x\|_\infty
= \|f - \bar{L}^{-1}(d_x)\|_\infty 
= \|f - d_{L^{-1}(x)}\|_\infty\\ 
&= f(L^{-1}(x)).
\end{align*}
Obviously $(L,f) \mapsto \bar L(f) = f \circ L^{-1}$ 
is an action of $\Isom(X)$ on $\E{X}$.

It is straightforward to check that the linear isomorphism 
$f \mapsto f \circ L^{-1}$ of $\R^X$ 
maps $\D{X}$ onto $\D{X}$ and that it commutes with the operators defined 
in the proof of Proposition~\ref{Prop:retr}, thus
$f^* \circ L^{-1} = (f \circ L^{-1})^*$, 
$q(f) \circ L^{-1} = q(f \circ L^{-1})$, and 
\[
\bar L(p(f)) = p(f) \circ L^{-1} = p(f \circ L^{-1})
\]
(compare~\cite[pp.~83--84]{Dre3}). 
\end{proof}

As an application of the above result 
we show that the weak convexity property of injective 
metric spaces stated in~\eqref{eq:gamxy} holds in an equivariant form.
By a \emph{geodesic bicombing} $\gam$ on a metric space $X$ 
we mean a map $\gam \colon X \times X \times [0,1] \to X$ 
such that, for every pair $(x,y) \in X \times X$, 
$\gam_{xy} := \gam(x,y,\cdot)$ is a geodesic from $x$ to $y$ with 
constant speed, that is, $\gam_{xy}(0) = x$, $\gam_{xy}(1) = y$, and 
$d(\gam_{xy}(s),\gam_{xy}(t)) = (t - s)d(x,y)$ for $0 \le s \le t \le 1$.

\begin{Prop} \label{Prop:bicombing}
Every injective metric space $X$ admits a geodesic bicombing $\gam$ 
such that, for all $x,y,x',y' \in X$ and $t \in [0,1]$,
\begin{enumerate}
\item[\rm (1)] 
$d(\gam_{xy}(t),\gam_{x'y'}(t)) \le (1-t)d(x,x') + td(y,y')$;
\item[\rm (2)]
$\gam_{xy}(t) = \gam_{yx}(1-t)$;
\item[\rm (3)]
$L \circ \gam_{xy} = \gam_{L(x)L(y)}$ for every isometry $L$ of $X$.
\end{enumerate}
\end{Prop}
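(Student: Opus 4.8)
The plan is to manufacture the bicombing by applying the canonical retraction $p \colon \D{X} \to \E{X}$ of Proposition~\ref{Prop:retr} to straight-line segments between extremal functions. First I would reduce to the case $X = \E{X}$. Since $X$ is injective, Proposition~\ref{Prop:alr} provides a $1$-Lipschitz retraction $r \colon \E{X} \to \e(X)$ of the injective hull onto the image of $\e$; viewed as a self-map of $\E{X}$, $r$ is $1$-Lipschitz and fixes $\e(X)$ pointwise, so $r = \id$ on $\E{X}$ by Theorem~\ref{Thm:isbell}~(1). Hence $\e(X) = \E{X}$, i.e.\ $\e$ is an isometry of $X$ onto $\E{X}$. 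It therefore suffices to build a bicombing with properties~(1)--(3) on $\E{X}$ and transport it back along $\e$, which conjugates each $L \in \Isom(X)$ into the induced isometry $\ol L$ of $\E{X}$ with $\ol L \circ \e = \e \circ L$ (Proposition~\ref{Prop:isometries}).

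Working in $\E{X}$, the key observation is that for $f,g \in \E{X}$ and $t \in [0,1]$ the affine combination $(1-t)f + tg$ lies in $\Dl{X}$: the defining inequality $h(x)+h(y) \ge d(x,y)$ is preserved under convex combinations, and a convex combination of $1$-Lipschitz functions is again $1$-Lipschitz. I would then set
\[
\gam_{fg}(t) := p\bigl((1-t)f + tg\bigr).
\]
Since $p$ fixes $\E{X}$ pointwise, the endpoints are $\gam_{fg}(0) = f$ and $\gam_{fg}(1) = g$. For $0 \le s \le t \le 1$ the two affine combinations differ by $(t-s)(g-f)$, so part~(2) of Proposition~\ref{Prop:retr} bounds $\|\gam_{fg}(s)-\gam_{fg}(t)\|_\infty$ by $(t-s)\,d(f,g)$; chaining the triangle inequality through $\gam_{fg}(s)$ and $\gam_{fg}(t)$ supplies the matching lower bound, so $\gam_{fg}$ is a constant-speed geodesic from $f$ to $g$.

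The three properties then fall out of the same two facts about $p$. For~(1), part~(2) of Proposition~\ref{Prop:retr} and convexity of the supremum norm give
\[
d(\gam_{fg}(t),\gam_{f'g'}(t)) \le \bigl\|(1-t)(f-f') + t(g-g')\bigr\|_\infty \le (1-t)\,d(f,f') + t\,d(g,g').
\]
Property~(2) is immediate from $(1-t)f + tg = tg + (1-t)f$. For~(3) I would use that $h \mapsto h \circ L^{-1}$ is linear together with the equivariance $\ol L(p(h)) = p(h \circ L^{-1})$ and the formula $\ol L(f) = f \circ L^{-1}$ from Proposition~\ref{Prop:isometries}, which yield $\ol L(\gam_{fg}(t)) = p\bigl((1-t)\ol L(f) + t\ol L(g)\bigr) = \gam_{\ol L(f)\,\ol L(g)}(t)$; transporting along $\e$ turns this into $L \circ \gam_{xy} = \gam_{L(x)L(y)}$ on $X$. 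The construction is essentially forced, so I anticipate no real obstacle; the only points requiring care are the reduction to $X = \E{X}$ and the use, recorded in Proposition~\ref{Prop:isometries}, of the fact that $p$ intertwines the linear action $f \mapsto f\circ L^{-1}$ on $\D{X}$ with the isometric $\Isom(X)$-action on $\E{X}$.
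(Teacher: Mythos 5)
Your proposal is correct and follows essentially the same route as the paper: identify $X$ with $\E{X}$ via $\e$, set $\gam_{xy}(t) := p\bigl((1-t)d_x + td_y\bigr)$ using the canonical retraction $p$ of Proposition~\ref{Prop:retr}, and derive (1)--(3) from the $1$-Lipschitz property and $\Isom(X)$-equivariance of $p$. The only difference is cosmetic: you spell out (via Theorem~\ref{Thm:isbell}~(1)) why $\e$ is onto $\E{X}$ and why the affine combination is symmetric in its endpoints, steps the paper leaves implicit.
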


\begin{proof}
Since $X$ is injective, the canonical map $\e \colon x \mapsto d_x$
is an isometry of $X$ onto $\E{X}$. Let $p \colon \D{X} \to \E{X}$ be the map
from the proof of Proposition~\ref{Prop:retr}. 
For all $x,y \in X$ and $t \in [0,1]$, we have $(1-t)d_x + td_y \in \Dl{X}$, 
and we set
\[
\gam_{xy}(t) := (\e^{-1} \circ p)((1-t)d_x + td_y).
\]
Since $p|_{\Dl{X}}$ is $1$-Lipschitz, it follows that
\begin{align*}
d(\gam_{xy}(t),\gam_{x'y'}(t)) 
&\le \|((1-t)d_x + td_y) - ((1-t)d_{x'} + td_{y'})\|_\infty \\
&\le (1-t) \|d_x - d_{x'}\|_\infty + t \|d_y - d_{y'}\|_\infty \\
&= (1-t)d(x,x') + t d(y,y') 
\end{align*}
for $x,y,x',y' \in X$ and $t \in [0,1]$. Similarly,
\begin{align*}
d(\gam_{xy}(s),\gam_{xy}(t))
&\le \|((1-s)d_x + sd_y) - ((1-t)d_x + td_y)\|_\infty \\
&= (t-s) \|d_x - d_y\|_\infty \\
&= (t-s) d(x,y) 
\end{align*}
for $x,y \in X$ and $0 \le s \le t \le 1$, and it is easy to see that
equality must hold since $\gam_{xy}(0) = x$ and $\gam_{xy}(1) = y$. 
Thus $\gam$ is a geodesic bicombing on $X$ that satisfies~(1) and~(2).

Now let $L \in \Isom(X)$, and recall that 
$\bar{L}(p(f)) = p(f \circ L^{-1})$ for 
all $f \in \D{X}$, by Proposition~\ref{Prop:isometries}. 
Since $d_v \circ L^{-1} = d_{L(v)}$ for all $v \in X$, we have
\[
\bar{L}(p((1-t)d_x + td_y)) = p((1-t)d_{L(x)} + t d_{L(y)})
\]
for $x,y \in X$ and $t \in [0,1]$. As $\e^{-1} \circ \bar{L} = L \circ \e^{-1}$,
this gives~(3).
\end{proof}


\section{Polyhedral structure} \label{Sect:poly}

The main purpose of this section is to show that under suitable discreteness 
and local finite dimensionality assumptions on the metric space $X$ and its 
injective hull $\E{X}$, respectively,
the latter has the structure of a polyhedral complex, like for finite $X$.
Some results of this type are developed in~\cite[Sections~5 and~6]{Dre}. 
Here we give an independent treatment, with some emphasis on the analysis of 
the isometry types of cells.
For simplicity, we shall focus on integer valued metrics.

At first, let $X$ be an arbitrary metric space.
For $f \in \R^X$, we denote by $A(f)$ the set of all unordered 
pairs $\{x,y\}$ of points in $X$ with the property that 
\[
f(x) + f(y) = d(x,y).
\] 
We consider the undirected graph $(X,A(f))$ with vertex set $X$, 
edge set $A(f)$, and with loops $\{x,x\} \in A(f)$ marking the zeros of $f$.
If $f \in \D{X}$ and $X$ is finite (or compact), 
then $f$ is extremal if and only if $(X,A(f))$ has no isolated vertices, 
that is, $\bigcup A(f) = X$. For an infinite $X$, this need no longer 
be true. We therefore introduce the subset
\[
\Ex{X} := \bigl\{ f \in \D{X} : \textstyle\bigcup A(f) = X \bigr\}
\]
of $\E{X}$, whose structure can be analyzed more directly,
but which is not injective unless it coincides with $\E{X}$.
(In~\cite{Dre}, $\Ex{X}$ is denoted $T_X^0$.) 
Proposition~\ref{Prop:e-dense} below will show that $\Ex{X}$ is 
dense in $\E{X}$ in case the metric of $X$ is integer valued.

A set $A$ of unordered pairs of points in $X$ is called 
an {\em admissible} edge set if there exists a function $f \in \Ex{X}$ 
with $A(f) = A$, and $\cA(X)$ denotes the set of all such admissible 
sets. Let $A \in \cA(X)$. Note that the graph $(X,A)$ has no isolated 
vertices but need not be connected. 
We associate with $A$ the affine subspace
\begin{align*}
H(A) &:= \bigl\{ g \in \R^X : A \sub A(g) \bigr\} \\
&\phantom{:}= \bigl\{ g \in \R^X : \text{$g(x) + g(y) = d(x,y)$ 
for all $\{x,y\} \in A$} \bigr\}
\end{align*}
of $\R^X$, and we define the {\em rank} of $A$
as the dimension of $H(A)$,
\[
\rk(A) := \dim(H(A)) \in \{0,1,2,\dots\} \cup \{\infty\}.
\]
An \emph{$A$-path} in $X$ of length $l \ge 0$ is an $(l+1)$-tuple 
$(v_0,\dots,v_l) \in X^{l+1}$ with $\{v_{i-1}, v_i\} \in A$
for $i = 1,\dots,l$. An \emph{$A$-cycle} is an $A$-path $(v_0,\dots,v_l)$ 
with $v_l = v_0$. Note that $(x,x)$ is an $A$-cycle of length~$1$ 
if $\{x,x\} \in A$. The {\em $A$-component} 
$\vc{x}$ of a point $x \in X$ is the set
\[
\vc{x} := \{y \in X: \text{there exists an $A$-path from $x$ to $y$}\}.
\]
Whenever $g,h \in H(A)$ and 
$\{v,v'\}\in A$, we have $g(v) + g(v') = d(v,v') = h(v) + h(v')$ 
and so $g(v') - h(v') = -(g(v) - h(v))$. 
It follows that
\begin{equation} \label{eq:g-h}
  g(y) - h(y) = (-1)^{l}(g(x) - h(x)) 
\end{equation}
whenever there is an $A$-path of length $l$ from $x$ to $y$.
As a consequence, if there exists an $A$-cycle of odd length in $\vc{x}$,
then $g|_{\vc{x}} = h|_{\vc{x}}$ for all $g,h \in H(A)$.
We call $\vc{x}$ an {\em odd $A$-component} in this case.
In the opposite case, if $\vc{x}$ contains no $A$-cycle of odd length, 
$\vc{x}$ is called an {\em even $A$-component}.
Then the set $\{ g|_{\vc{x}} : g \in H(A) \}$ forms a one-parameter family. 
In fact, every even $A$-component admits a unique partition 
\begin{equation} \label{eq:par}
\vc{x} = \vc{x}_1 \cup \vc{x}_{-1}
\end{equation}
such that $x \in \vc{x}_1$ and every edge $\{v,v'\} \in A$ with
$\{v,v'\} \sub \vc{x}$ connects $\vc{x}_1$ and $\vc{x}_{-1}$; that is, 
the subgraph of $(X,A)$ induced by $\vc{x}$ is bipartite.
Then, by~\eqref{eq:g-h}, $g(y) - h(y) = \sig(g(x) - h(x))$ whenever 
$g,h \in H(A)$, $\sig \in \{1,-1\}$, and $y \in \vc{x}_\sig$.
It is now clear that $\rk(A)$ is exactly the number of even 
$A$-components of $X$. 
If $\rk(A) = 0$, $H(A)$ consists of a single function. 
This occurs in particular if $A = A(d_x)$ for some $x\in X$; 
then $\{x,y\} \in A$ for every $y \in X$, so $X$
is $A$-connected, and $(x,x)$ is an $A$-cycle of length $1$.

\begin{Lem} \label{Lem:ha}
Suppose that $X$ is a metric space, $A \in \cA(X)$, and 
$1 \le n := \rk(A) < \infty$.
Then the difference of any two elements of $H(A)$ is uniformly
bounded on $X$, so the supremum norm gives a metric on $H(A)$,
and there exists an affine isometry from $H(A)$ onto $l^n_\infty$.
In particular $H(A)$ is injective.
\end{Lem}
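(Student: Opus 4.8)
The plan is to exhibit an explicit affine isometry $\Phi \colon H(A) \to l^n_\infty$ built directly from the even/odd $A$-component structure established just before the lemma. Since $A \in \cA(X)$, there is some $f \in \Ex{X}$ with $A(f) = A$, and then $f \in H(A)$, so $H(A) \ne \es$; fix a reference function $g_0 \in H(A)$. By hypothesis $X$ has exactly $n = \rk(A)$ even $A$-components; I would enumerate them as $E_1,\dots,E_n$ and, using the bipartite partition $E_k = (E_k)_1 \cup (E_k)_{-1}$ from~\eqref{eq:par}, pick a base point $x_k \in (E_k)_1$ in each. Then define $\Phi(g) := \bigl((g-g_0)(x_1),\dots,(g-g_0)(x_n)\bigr)$.

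First I would record how an arbitrary $g \in H(A)$ compares with $g_0$ pointwise. On any odd $A$-component the two functions agree, by the remark following~\eqref{eq:g-h}; on an even component $E_k$, the same equation gives $(g-g_0)(y) = \sig\,(g-g_0)(x_k)$ for $y \in (E_k)_\sig$. Hence $g - g_0$ is determined on all of $X$ by its $n$ values at the base points, which shows $\Phi$ is injective. The identical description applies to the difference of any two elements $g,h \in H(A)$, giving $\sup_{y \in X}|(g-h)(y)| = \max_{1 \le k \le n}|(g-h)(x_k)|$, a \emph{finite} maximum since $n < \infty$. This already yields the first assertion, that the difference of any two elements of $H(A)$ is uniformly bounded on $X$ so that the supremum norm is a genuine metric, and it simultaneously gives $\|g-h\|_\infty = \|\Phi(g)-\Phi(h)\|_\infty$, so $\Phi$ is distance-preserving.

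For surjectivity I would reverse the construction: given $(t_1,\dots,t_n) \in \R^n$, define $g$ to equal $g_0$ on every odd component and $g_0 + \sig\, t_k$ on $(E_k)_\sig$, and then verify $g \in H(A)$. The only edges that need testing are those lying inside a single component. Inside an odd component $g = g_0 \in H(A)$ works verbatim, while an edge $\{v,v'\}$ inside $E_k$ joins the two sides of the bipartition, so the $\pm t_k$ contributions cancel and $g(v) + g(v') = g_0(v) + g_0(v') = d(v,v')$. Thus $\Phi$ is an affine bijection and an isometry onto $l^n_\infty$, which is the second assertion. The third then follows immediately: $l^n_\infty$ is injective, and injectivity is preserved under isometry.

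I do not expect a serious obstacle, as the argument is essentially bookkeeping with the component structure already in place. The one point demanding care is the passage from the supremum of $|g-h|$ over the possibly infinite set $X$ to a maximum over the $n$ base points; this is precisely where the finiteness hypothesis $n = \rk(A) < \infty$ enters and is what guarantees that $\|\cdot\|_\infty$ takes finite values on $H(A)$.
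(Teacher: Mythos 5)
Your proof is correct and follows essentially the same route as the paper: the paper also evaluates at one reference point per even $A$-component and deduces $\|g-h\|_\infty=\max_k|g(x_k)-h(x_k)|$ from~\eqref{eq:g-h}. The only difference is that you spell out the surjectivity onto $l^n_\infty$ (which the paper leaves implicit, relying on the preceding observation that $\{g|_{\vc{x}} : g\in H(A)\}$ is a one-parameter family on each even component), and you normalize by a reference element $g_0$, which changes the map only by a translation.
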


\begin{proof}
Choose reference points $x_1,\dots,x_n \in X$ such that 
$\vc{x_1},\dots,\vc{x_n}$ are precisely the $n$ even $A$-components 
of~$X$. 
Let $I \colon H(A) \to l^n_\infty$ be the affine map defined by
\[
I(g) := (g(x_1),\dots,g(x_n)).
\]  
It follows from~\eqref{eq:g-h} that 
$\|g - h\|_\infty = \max_{1 \le k \le n} |g(x_k) - h(x_k)|
= \|I(g) - I(h)\|_\infty$ for all $g,h \in H(A)$.
\end{proof}

For every $A \in \cA(X)$ we consider the set
\[
P(A) := \Ex{X} \cap H(A) = \{g \in \Ex{X} : A \sub A(g)\}.
\]
First we note that
\begin{equation} \label{eq:pg-char}
P(A) = \E{X} \cap H(A)
= \D{X} \cap H(A).
\end{equation}
To see this, let $f \in \Ex{X}$ be such that $A(f) = A$, and 
let $g \in \D{X} \cap H(A)$. Every $x \in X$ is part of an edge 
$\{x,y\} \in A(f) = A$; then $\{x,y\} \in A(g)$ because $g \in H(A)$. 
Since $g \in \D{X}$, this shows that $g \in \Ex{X}$. In view of the inclusions
$\Ex{X} \sub \E{X} 
\sub \D{X}$ we get~\eqref{eq:pg-char}. As $\D{X}$ is convex,
so is $P(A)$. For every $f \in \Ex{X}$ we have $f \in P(A(f))$, thus
\[
\cP := \{P(A)\}_{A \in \cA(X)}
\]
is a family of convex subsets of $\R^X$ whose union equals $\Ex{X}$. Note that 
$P(A') \sub P(A)$ if and only if $A \sub A'$.
The next result lists some basic properties of $\cA(X)$ and $\cP$.

\begin{Prop} \label{Prop:p-properties}
Let $X$ be a metric space. Then:
\begin{enumerate}
\item[\rm (1)]
If $f_0,f_1 \in \Ex{X}$ and $\lam \in (0,1)$, then 
$f := (1-\lam)f_0 + \lam f_1 \in \D{X}$ and $A(f) = A(f_0) \cap A(f_1)$, 
so $f \in \Ex{X}$ if and only if $\bigcup(A(f_0) \cap A(f_1)) = X$.
\item[\rm (2)]
For $A_0,A_1 \in \cA(X)$, the following are equivalent:
\begin{enumerate}
\item[\rm (i)] 
$P(A_0) \cup P(A_1) \sub P(A)$ for some $A \in \cA(X)$; 
\item[\rm (ii)] 
$\bigcup(A_0 \cap A_1) = X$; 
\item[\rm (iii)] 
$A_0 \cap A_1 \in \cA(X)$.
\end{enumerate}
If conditions {\rm (i)--(iii)} hold, 
then $P(A_0) \cup P(A_1) \sub P(A_0 \cap A_1) \sub
P(A)$.
\end{enumerate}
\end{Prop}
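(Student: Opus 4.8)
The plan is to prove (1) by a one-line convexity computation and then to derive (2) from the cyclic chain of implications (i)$\Rightarrow$(ii)$\Rightarrow$(iii)$\Rightarrow$(i), feeding part~(1) into the crucial middle step. For (1), note first that $\D{X}$ is the intersection of the halfspaces $g(x)+g(y)\ge d(x,y)$ and is therefore convex, so $f=(1-\lam)f_0+\lam f_1\in\D{X}$ is immediate. For a fixed pair $\{x,y\}$ I would write
\[
f(x)+f(y)=(1-\lam)\bigl(f_0(x)+f_0(y)\bigr)+\lam\bigl(f_1(x)+f_1(y)\bigr).
\]
Each parenthesized summand is at least $d(x,y)$ since $f_0,f_1\in\D{X}$, so the convex combination is $\ge d(x,y)$; and because $\lam\in(0,1)$ both weights are strictly positive, equality holds precisely when $f_0(x)+f_0(y)=d(x,y)$ \emph{and} $f_1(x)+f_1(y)=d(x,y)$. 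This yields $A(f)=A(f_0)\cap A(f_1)$ directly, and the last clause is then just the definition of $\Ex{X}$.

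For (2) I would argue as follows. To see (i)$\Rightarrow$(ii), pick $f_0,f_1\in\Ex{X}$ with $A(f_i)=A_i$; the hypothesis places $f_0\in P(A_0)\sub P(A)$ and $f_1\in P(A_1)\sub P(A)$, and since $P(A)=\D{X}\cap H(A)$ by~\eqref{eq:pg-char}, both functions lie in $H(A)$, giving $A\sub A(f_0)=A_0$ and $A\sub A(f_1)=A_1$, hence $A\sub A_0\cap A_1$; as $A$ is admissible, $\bigcup A=X$, and the inclusion of edge sets forces $\bigcup(A_0\cap A_1)=X$. For (ii)$\Rightarrow$(iii), apply part~(1) with $\lam=\tfrac12$ to the same $f_0,f_1$ to obtain $f\in\D{X}$ with $A(f)=A_0\cap A_1$; hypothesis~(ii) says $\bigcup A(f)=X$, so $f\in\Ex{X}$ and therefore $A_0\cap A_1=A(f)\in\cA(X)$. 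For (iii)$\Rightarrow$(i), take $A:=A_0\cap A_1$; since $A\sub A_0$ and $A\sub A_1$, the monotonicity $P(A')\sub P(A)$ whenever $A\sub A'$, noted before the proposition, gives $P(A_0)\cup P(A_1)\sub P(A)$.

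The displayed refinement then comes for free: the step (iii)$\Rightarrow$(i) already shows $P(A_0)\cup P(A_1)\sub P(A_0\cap A_1)$, while for any admissible $A$ witnessing~(i) the computation in (i)$\Rightarrow$(ii) gives $A\sub A_0\cap A_1$ and hence $P(A_0\cap A_1)\sub P(A)$ by the same monotonicity. I do not expect a serious obstacle here; the only point requiring genuine care is the strict inequality $\lam\in(0,1)$ in part~(1), which is exactly what makes the equality condition biconditional and so pins $A(f)$ down as the \emph{intersection} rather than merely a superset, together with carrying the ``no isolated vertices'' condition $\bigcup A=X$ correctly through each implication.
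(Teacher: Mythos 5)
Your proof is correct and follows essentially the same route as the paper: the convexity computation with strict weights for part (1), and the cycle (i)$\Rightarrow$(ii)$\Rightarrow$(iii)$\Rightarrow$(i) feeding (1) into the middle step. The only (harmless) variation is in (i)$\Rightarrow$(ii), where you deduce $A\sub A_0\cap A_1$ directly from $f_0,f_1\in P(A)$ rather than placing the midpoint $\frac12(f_0+f_1)$ in $P(A)$ by convexity as the paper does; both arguments are valid and you recover the paper's final inclusion chain in the same way.
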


\begin{proof}
Let $f$ be given as in~(1). 
Since $f_0,f_1 \in \D{X}$ and $\lam \in (0,1)$, we have
\begin{align*}
f(x) + f(y) 
&= (1-\lam)(f_0(x) + f_0(y)) + \lam(f_1(x) + f_1(y)) \\
&\ge (1-\lam)d(x,y) + \lam d(x,y) = d(x,y)
\end{align*}
for every pair of points $x,y \in X$, and 
equality holds if and only if $\{x,y\} \in A(f_0) \cap A(f_1)$.

Regarding~(2), choose $f_0,f_1 \in \Ex{X}$ such that $A(f_i) = A_i$, and put
$f := \frac12(f_0 + f_1)$. 
If $P(A_0) \cup P(A_1) \sub P(A)$ for some $A \in \cA(X)$, 
then $f \in P(A)$ by convexity and hence $f \in \Ex{X}$, 
so $\bigcup(A_0 \cap A_1) = X$ by~(1). 
If~(ii) holds, then, again by~(1), $f \in \Ex{X}$ and so
$A_0 \cap A_1 = A(f) \in \cA(X)$. 
Finally, assuming~(iii), we obtain
$P(A_0) \cup P(A_1) \sub P(A_0 \cap A_1)$ and thus~(i), and 
for every $A \in \cA(X)$ with $P(A_0) \cup P(A_1) \sub P(A)$
we have $A \sub A_0 \cap A_1$ and hence $P(A_0 \cap A_1) \sub P(A)$.
\end{proof}

We now pass to integer valued metrics. Then the sets $P(A)$ with 
$\rk(A) = n < \infty$ turn out to be $n$-dimensional polytopes:

\begin{Thm} \label{Thm:pa}
Suppose that $X$ is a metric space with integer valued metric.
Let $A \in \cA(X)$, and assume that $1 \le n := \rk(A) < \infty$. Then:
\begin{enumerate}
\item[\rm (1)]
The set $P(A) \sub H(A) \sub \R^X$ is an injective $n$-dimensional
polytope.
\item[\rm (2)]
The interior of $P(A)$ relative to $H(A)$ is the set
$\{g \in \Ex{X} : A(g) = A\}$. 
\item[\rm (3)]
The faces of $P(A)$ are precisely the 
sets $P(A')$ with $A' \in \cA(X)$ and $A \sub A'$.
\end{enumerate}
\end{Thm}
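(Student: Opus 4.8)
The plan is to realize $P(A)$ as a bounded polyhedron inside the coordinate model of $H(A)$ provided by Lemma~\ref{Lem:ha}, and then to read off all three assertions from the resulting system of inequalities. The hard part will be a single attainment step, where the integrality of the metric enters decisively.

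First I would use \eqref{eq:pg-char} to write $P(A) = \D{X} \cap H(A)$, a convex subset of the $n$-dimensional affine space $H(A)$. Fix $f \in \Ex{X}$ with $A(f) = A$ (so $f \in P(A) \ne \es$), fix representatives $x_1,\dots,x_n$ of the even $A$-components, and coordinatize $H(A)$ by $g \mapsto t = (g(x_1),\dots,g(x_n))$, which is an affine isometry onto $l^n_\infty$ by Lemma~\ref{Lem:ha}. By the parity relation \eqref{eq:g-h} and the bipartition \eqref{eq:par}, each value $g(x)$ is an affine function of $t$: a constant if $x$ lies in an odd component, and $\sig g(x_k) + (\text{const})$ with $\sig \in \{1,-1\}$ if $x \in [x_k]_\sig$. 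Substituting into the inequalities $g(x) + g(y) \ge d(x,y)$ defining $\D{X}$, each becomes either a vacuous constant inequality (both points odd, or both in one even component on opposite sides), a one-variable inequality $\sig t_k \le C$ (one point odd, or both in $[x_k]$ on the same side), or a two-variable inequality $\sig t_k + \tau t_l \le C$ with $k \ne l$ and $|\sig| = |\tau| = 1$. Thus $P(A)$ is cut out by a system precisely of the form treated in Proposition~\ref{Prop:p-inj}, whence $P(A)$ is injective. For boundedness I note that every $g \in \D{X}$ is non-negative, so $t_k = g(x_k) \ge 0$; and since $\bigcup A = X$, the vertex $x_k$ lies on an edge $\{x_k,w\} \in A$ with $w \in [x_k]_{-1}$, giving $g(x_k) = d(x_k,w) - g(w) \le d(x_k,w)$. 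Finally, although there are infinitely many pairs, there are only finitely many coefficient patterns $(\sig,k)$ and $(\sig,\tau,k,l)$, and all constraints of one pattern are jointly equivalent to the single inequality obtained by taking the supremum of their right-hand constants (finite, being bounded above by the left-hand side evaluated at $f$). Hence $P(A)$ is an intersection of finitely many closed half-spaces inside a box, i.e.\ a polytope.

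For (2) I would show that each of these finitely many facet suprema is attained; this is the step I expect to be the main obstacle, and the one using integrality. Writing the slack $s_g(x,y) = g(x)+g(y)-d(x,y) \ge 0$, the reduction above expresses $s_g(x,y)$, for all pairs of a fixed pattern, as $\sig g(x_k)+\tau g(x_l)$ plus a quantity lying in a fixed coset of $\tfrac12\Z$: tracing $d$-values along $A$-paths via \eqref{eq:g-h} shows the constant part of each $g(x)$ differs from $\sig f(x_k)$ (respectively from a half-integer, in odd components) by an integer. Thus the slacks of a given pattern form a discrete, bounded-below set, so their infimum is attained, and a point of $P(A)$ lies on a facet if and only if some pair outside $A$ has zero slack there, i.e.\ if and only if $A(g) \supsetneq A$. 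The reverse inclusion needs no integrality: if $\{x,y\} \in A(g) \sm A$, the equation $g(x)+g(y)=d(x,y)$ is non-constant on $H(A)$ (otherwise it would already hold at $f$, forcing $\{x,y\} \in A(f) = A$), hence cuts out a supporting hyperplane through $g$. Combining the two directions, the interior of $P(A)$ relative to $H(A)$ is exactly $\{g \in \Ex{X} : A(g) = A\}$. Since $f$ belongs to this set, the interior is non-empty, so $P(A)$ is genuinely $n$-dimensional, completing (1) and (2).

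Finally, (3) follows from standard polytope face theory together with (2). Given $A' \in \cA(X)$ with $A \sub A'$ and $P(A') \ne \es$, one has $P(A') = P(A) \cap \bigcap_{\{x,y\} \in A' \sm A}\{g(x)+g(y)=d(x,y)\}$, an intersection of $P(A)$ with supporting hyperplanes, hence a face. Conversely, for a face $F$ I would choose $g_0$ in its relative interior and set $A' := A(g_0) \in \cA(X)$, so $A \sub A'$; since the active-constraint set is minimal at $g_0$ and constant on $\operatorname{relint} F$, the face $F$ is the smallest face containing $g_0$, which by the hyperplane description coincides with $P(A')$. This exhausts the faces and yields~(3).
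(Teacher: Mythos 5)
Your proposal is correct and follows essentially the same route as the paper: coordinatize $H(A)$ via Lemma~\ref{Lem:ha}, group the inequalities defining $\D{X}\cap H(A)$ into the finitely many sign patterns $\sig t_k$ and $\sig t_k+\tau t_l$, use the integrality of the metric (via~\eqref{eq:f-even} and~\eqref{eq:f-odd}) to see that each pattern's extremum is attained and strictly negative, and invoke Proposition~\ref{Prop:p-inj} for injectivity. The only (harmless) deviations are your more direct boundedness argument and your use of the ``smallest face containing a relative-interior point'' in place of the paper's facet analysis followed by downward induction on dimension.
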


The proof will also give precise information on the 
possible isometry types of~$P(A)$.

\begin{proof}
We fix reference points $x_1,\dots,x_n \in X$ representing the 
$n$ even $A$-components. For each $k$ we consider the partition 
$\vc{x_k} = \vc{x_k}_1 \cup \vc{x_k}_{-1}$ as in~\eqref{eq:par}. 
We also fix an element $f \in \Ex{X}$ with $A(f) = A$. 
For $y \in \vc{x_k}_\sig$, $\sig \in \{1,-1\}$, we have
\begin{equation} \label{eq:f-even}
f(y) \in \Z + \sig f(x_k).
\end{equation}
By contrast, if $y \in X_0 := X \sm \bigcup_{k=1}^n \vc{x_k}$,
then there is an $A$-path from $y$ to itself of odd length, so
$f(y) \in \Z - f(y)$ and thus
\begin{equation} \label{eq:f-odd}
f(y) \in \frac12 \Z.
\end{equation}
Now let $I_f \colon H(A) \to l^n_\infty$ be the affine isometry defined by
\[
I_f(g) := (g(x_1) - f(x_1),\dots,g(x_n) - f(x_n));
\]
compare the proof of Lemma~\ref{Lem:ha}. 
To show that $I_f(P(A))$ is a polytope,
we introduce constants as follows. 
First, for $1 \le k \le n$ and $\sig \in \{1,-1\}$, put
\[
C_{k \sig} := \sup \biggl\{ \frac{d(x,y) - f(x) - f(y)}{2} :
x,y \in \vc{x_k}_\sig \biggl\}.
\]
For $x,y \in \vc{x_k}_\sig$ we have $\{x,y\} \not\in A = A(f)$,
hence $0 > d(x,y) - f(x) - f(y) \in \Z - 2 \sig f(x_k)$ by~\eqref{eq:f-even}.
Thus the supremum is attained, and
$C_{k \sig} < 0$.   
Next, if $X_0 \ne \es$, then for $1 \le k \le n$ and $\sig \in \{1,-1\}$,
define
\[
C_{k \sig 0} := \sup \bigl\{ d(x,y) - f(x) - f(y) :
(x,y) \in \vc{x_k}_\sig \times X_0 \bigr\}.
\]
For every such pair $(x,y)$, we have
$0 > d(x,y) - f(x) - f(y) \in \frac12\Z - \sig f(x_k)$, hence
$C_{k \sig 0} < 0$.
Set $\bar C_{k \sig} := C_{k \sig}$ if $X_0 = \es$ and
\[
\bar C_{k \sig} := \max \bigl\{ C_{k \sig},C_{k \sig 0} \bigr\}
\] 
if $X_0 \ne \es$. 
Finally, if $n \ge 2$, then for $1 \le k < l \le n$ and 
$\sig,\tau \in \{1,-1\}$, define
\[
C_{k \sig l \tau} := \sup \bigl\{ d(x,y) - f(x) - f(y) :
(x,y) \in \vc{x_k}_\sig \times \vc{x_l}_\tau \bigr\}.
\]
For every such pair $(x,y)$, we have $0 > d(x,y) - f(x) - f(y) 
\in \Z - \sig f(x_k) - \tau f(x_l)$ and so $C_{k \sig l \tau} < 0$.
Now let $Q$ denote the set of all
$t = (t_1,\dots,t_n) \in l^n_\infty$ satisfying the system of 
$2n + 4\binom{n}{2} = 2n^2$ relations
\begin{align}
\sig t_k &\ge \bar C_{k \sig \phantom{l \tau}} 
\quad \text{($1 \le k \le n$, $\,\sig \in \{1,-1\}$),} \label{eq:def-q-1} \\
\sig t_k + \tau t_l &\ge C_{k \sig l \tau} 
\quad \text{($1 \le k < l \le n$, $\,\sig,\tau \in \{1,-1\}$).} 
\label{eq:def-q-2}
\end{align}
By the first $2n$ inequalities $Q$ is bounded.
Since all constants on the right side are strictly negative, $Q$ is a polytope 
containing $I_f(f) = 0$ in its interior, so $Q$ has dimension $n$.
It follows readily from Proposition~\ref{Prop:p-inj} 
that $Q$ is itself injective.

We claim that $I_f(P(A)) = Q$. Let $g \in H(A)$ and $t := I_f(g)$. 
In view of~\eqref{eq:pg-char}, we need to check that $t \in Q$ if and only if 
$g(x) + g(y) \ge d(x,y)$ for all pairs $\{x,y\} \not\in A$.
First, consider pairs of points $x,y \in \vc{x_k}$, for some 
$k$. If $\sig \in \{1,-1\}$ and $x,y \in \vc{x_k}_\sig$, then
\[
2\sig t_k = 2\sig(g(x_k) - f(x_k)) = g(x) - f(x) + g(y) - f(y)
\]
by~\eqref{eq:g-h}. Hence, we have $\sig t_k \ge C_{k \sig}$ 
if and only if the inequality $g(x) + g(y) \ge d(x,y)$ 
holds for all pairs of points $x,y \in \vc{x_k}_\sig$.
If $x \in \vc{x_k}_1$ and $y \in \vc{x_k}_{-1}$, then $g(x) + g(y) = f(x) + f(y) 
> d(x,y)$ by~\eqref{eq:g-h} and since $\{x,y\} \not\in A$ by assumption. 
Next, in case $X_0 \ne \es$, consider pairs 
$(x,y) \in \vc{x_k}_\sig \times X_0$, for some $k$ and 
$\sig$. Then $g(y) = f(y)$ and so 
\[
\sig t_k = \sig(g(x_k) - f(x_k)) = g(x) - f(x) + g(y) - f(y),
\]
hence $\sig t_k \ge C_{k \sig 0}$ 
if and only if $g(x) + g(y) \ge d(x,y)$ 
for all such $(x,y)$. 
If $x,y \in X_0$ and $\{x,y\} \not\in A$, 
then $g(x) + g(y) = f(x) + f(y) > d(x,y)$.
Finally, in case $n \ge 2$, consider pairs 
$(x,y) \in \vc{x_k}_\sig \times \vc{x_l}_{\tau}$ for some 
$k < l$ and $\sig,\tau$. Then
\[
\sig t_k + \tau t_l = \sig(g(x_k) - f(x_k)) + \tau(g(x_l) - f(x_l))
= g(x) - f(x) + g(y) - f(y),
\]
therefore $\sig t_k + \tau t_l \ge C_{k \sig l \tau}$ 
if and only if $g(x) + g(y) \ge d(x,y)$ 
for all such $(x,y)$. 
This yields $I_f(P(A)) = Q$ and completes the proof of~(1).

Since $f$ was an arbitrary element of $\Ex{X}$ with $A(f) = A$
and $I_f(f)$ is an inner point of $Q$, it follows
that the set $\{g \in \Ex{X} : A(g) = A\}$ is contained in the 
relative interior of $P(A)$. 
Furthermore, if $g \in P(A)$ is such that the inclusion $A \sub A(g)$ 
is strict, then $g(x) + g(y) = d(x,y)$ for some pair $\{x,y\} \not\in A$ 
and we see from the above argument that equality holds in at least one 
of the $2n^2$ inequalities~\eqref{eq:def-q-1}, \eqref{eq:def-q-2};
thus $I_f(g)$ is a boundary point of $Q$. This shows~(2).

Now suppose that $F$ is face of $P(A)$ of dimension $n-1$. 
Choose a point $g$ in the relative interior of $F$.
Since $g \in H(A)$, we have $H(A(g)) \sub H(A)$. 
For $t := I_f(g)$, exactly one of the $2n^2$ inequalities~\eqref{eq:def-q-1}, 
\eqref{eq:def-q-2} is an equality and the others are strict. 
Reviewing the above argument again we see that then 
the inclusion $A \sub A(g)$ is strict and exactly one of 
the following two cases occurs: 
\begin{enumerate}
\item[(i)]
there exist $k$ and $\sig$ such that every 
edge in $A(g) \setminus A$ relates two (possibly equal) points 
of $\vc{x_k}_\sig$ or connects $\vc{x_k}_\sig$ with $X_0$;
\item[(ii)]
there exist $k < l$ and $\sig,\tau$ such that every edge in
$A(g) \setminus A$ connects $\vc{x_k}_\sig$ with $\vc{x_l}_\tau$. 
\end{enumerate}
In either case, $X$ has $n-1$ even $A(g)$-components, thus $H(A(g))$ 
is an $(n-1)$-dimensional affine subspace of $H(A)$.
For all $h \in H(A(g))$, $A \sub A(g) \sub A(h)$ and the first 
inclusion is strict, so $H(A(g))$ contains no inner points of $P(A)$ 
by~(2). As $g \in H(A(g))$ is in the relative interior of $F$,
we have $F = P(A) \cap H(A(g))$ and hence $F = P(A(g))$.
Now it follows easily by downward induction on $k$ 
that every face $F$ of $P(A)$ of dimension $k \in \{0,\dots,n\}$
satisfies $F = P(A_F)$ for some $A_F \in \cA(X)$ with $A \sub A_F$ 
and $\rk(A_F) = k$. Conversely, let $A' \in \cA(X)$ with 
$A \sub A'$ be given. Then $P(A') \sub P(A)$, so the relative interior 
of $P(A')$ meets the relative interior of some face $F = P(A_F)$ of 
$P(A)$. Applying~(2) to both $P(A')$ and $P(A_F)$ we obtain 
$A' = A_F$, thus $P(A') = P(A_F) = F$.
This concludes the proof of~(3).
\end{proof}

Next we show that $\Ex{X}$ is dense in $\E{X}$, provided the metric of $X$ 
is integer valued. A different criterion is given in~\cite[(5.17)]{Dre}.
 
\begin{Prop} \label{Prop:e-dense}
Let $X$ be a metric space with integer valued metric.
Then for every $f \in \E{X}$ and every integer $m \ge 1$ there 
exists a function $f' \in \Ex{X}$ with values in $\frac1m\Z$ such that 
$\|f - f'\|_\infty \le \frac{1}{2m}$. 
\end{Prop}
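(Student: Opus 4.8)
The plan is to reduce to the case $m=1$ by rescaling, and then to round $f$ to the nearest integer, the only delicate points being those where $f$ takes a value in $\Z+\tfrac12$. For the rescaling, note that $g\mapsto mg$ carries $\D{X}$ for the metric $d$ onto $\D{X}$ for the integer valued metric $md$, and by~\eqref{eq:extr-sup} it carries $\E{X}$ onto the extremal functions for $md$; moreover $\{x,y\}\in A(mg)$ with respect to $md$ if and only if $\{x,y\}\in A(g)$ with respect to $d$, so it respects $\Ex{X}$ as well. Under this dictionary the assertion for $(d,f,m)$ is exactly the assertion for $(md,mf,1)$. Thus it suffices to assume $d$ integer valued and to produce an integer valued $f'\in\Ex{X}$ with $\|f-f'\|_\infty\le\tfrac12$. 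I would also record the simplification that, since $\Ex{X}=\{f'\in\D{X}:\bigcup A(f')=X\}$, it is enough to find an $f'\in\D{X}$ such that every $x$ lies in some tight pair, i.e.\ admits a $y$ with $f'(x)+f'(y)=d(x,y)$.

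The construction I would use is to let $f'(x)$ be the nearest integer to $f(x)$, the only ambiguity occurring on the set $V:=\{x:f(x)\in\Z+\tfrac12\}$. To resolve it, form the graph $G$ with vertex set $V$ having an edge for each pair $\{x,y\}\subset V$ with $f(x)+f(y)=d(x,y)$; here one uses that any tight pair meeting $V$ has both endpoints in $V$, since $f(x)\in\Z+\tfrac12$ forces $f(y)=d(x,y)-f(x)\in\Z+\tfrac12$. I would then pick a maximal independent set $D\subset V$, round the points of $D$ down (so $f'(x)=f(x)-\tfrac12$) and all remaining points of $V$ up. Since $|f'(x)-f(x)|\le\tfrac12$ everywhere, the norm bound is immediate, and the whole content lies in verifying $f'\in\D{X}$ and tightness.

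For membership in $\D{X}$, observe that $f'(x)+f'(y)-d(x,y)$ is an integer which differs from the nonnegative number $f(x)+f(y)-d(x,y)$ by at most $1$; hence it can be negative only when the latter is $0$ (a tight pair) and both values are rounded down by exactly $\tfrac12$, that is, $x,y\in V$ are joined by an edge of $G$ with $x,y\in D$. This is excluded because $D$ is independent. For tightness at a point $x$ whose supremum in~\eqref{eq:extr-sup} is attained, the witnessing $y$ gives a tight pair; the fractional parts of $f(x)$ and $f(y)$ are complementary, and a short case check shows $f'(x)+f'(y)=d(x,y)$ still holds. When $x\in V$ this is where the choice of $D$ enters: a vertex of $D$ has all its $G$-neighbours in $U:=V\sm D$, while a vertex of $U$ has, by maximality of $D$, a $G$-neighbour in $D$, so in either case one endpoint rounds up and the other down and the pair stays tight.

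The step I expect to be the main obstacle is tightness at a point $x$ whose supremum in~\eqref{eq:extr-sup} is \emph{not} attained; such an $x$ lies in no tight pair and is therefore isolated in $G$ when $x\in V$. Here I would take $y_i$ with $\delta_i:=f(x)+f(y_i)-d(x,y_i)\to 0^+$ and compute the fractional part of $f(y_i)$: for large $i$ it equals $\delta_i$ plus the complement of the fractional part of $f(x)$, which fixes the rounding direction of $y_i$ and yields $f'(x)+f'(y_i)-d(x,y_i)=0$ exactly for all large $i$. In particular, if $x\in V$ then the $y_i$ have fractional part slightly above $\tfrac12$ and thus round up, so rounding $x$ down restores exact tightness; this is consistent with the prescription, since an isolated vertex lies in every maximal independent set $D$. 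This non-attained case is the only place where the lack of compactness must genuinely be handled, and it is what makes the bookkeeping at the half-integer points delicate.
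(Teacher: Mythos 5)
Your proof is correct, but it takes a genuinely different route from the paper's. The paper makes a one-sided, order-theoretic construction: it first takes $g_0$, the largest $\frac1m\Z$-valued function $\le f+\frac{1}{2m}$ (automatically in $\D{X}$ because $d$ is integer valued), and then passes to a \emph{minimal} element $f'$ of the family $\cF$ of all $\frac1m\Z$-valued members of $\D{X}$ within $\frac{1}{2m}$ of $f$. The whole content is a single computation showing that if $g\in\cF$ has an isolated vertex $x$, then necessarily $g(x)=f(x)+\frac{1}{2m}$, so $g$ can be lowered at $x$ by one lattice step while staying in $\cF$; minimality then forces $\bigcup A(f')=X$. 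This sidesteps entirely the case distinction between attained and non-attained suprema in~\eqref{eq:extr-sup} that you have to handle by hand, and it needs no rescaling. Your symmetric rounding-to-nearest scheme instead localizes all the difficulty at the half-integer set $V$ and resolves it with a maximal independent set of the tightness graph; the verifications you sketch (both endpoints of a tight pair meeting $V$ lie in $V$, isolated vertices lie in every maximal independent set and correspond exactly to the non-attained case, dominance of a maximal independent set supplies the witness for vertices of $V\sm D$) all check out, including the $\delta_i\to 0^+$ analysis. Both arguments invoke Zorn's Lemma exactly once (minimal element of $\cF$ versus maximal independent set). What your version buys is a more explicit, combinatorial description of $f'$ and a clear picture of where the choices live; what the paper's buys is brevity and a uniform treatment of all points, tight or not.
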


\begin{proof}
Let $f \in \E{X}$, $m \ge 1$, and put $\eps := \frac{1}{2m}$.
Denote by $\cF$ the set of all functions $g \in \D{X}$ with values 
in $2\eps\Z$ and with $\|f - g\|_\infty \le \eps$. To see that
$\cF$ is non-empty, let $g_0 \in \R^X$ be the largest function less 
than or equal to $f + \eps$ with values in $2\eps\Z$. 
Then $g_0 > f - \eps$, in particular $\|f - g_0\|_\infty \le \eps$.
For $x,y \in X$, we have $g_0(x) + g_0(y) > f(x) + f(y) - 2\eps 
\ge d(x,y) - 2\eps$, and since both the first and the last term
are in $2\eps\Z$, this gives $g_0(x) + g_0(y) \ge d(x,y)$. So $g_0 \in \cF$.

Now let $g \in \cF$ be arbitrary, and suppose that $x \in X \sm \bigcup A(g)$.
Then, for every $y \in X$, we have the strict inequality 
$g(x) > d(x,y) - g(y)$ in $2\eps\Z$, so that
\[
g(x) \ge \sup_{y \in X}(d(x,y) - g(y) + 2\eps) 
\ge \sup_{y \in X}(d(x,y) - f(y) + \eps) = f(x) + \eps
\]
by~\eqref{eq:extr-sup}. Hence $g(x) = f(x) + \eps$.
Let the function $g'$ be defined by
\[
g'(x) := g(x) - 2\eps = f(x) - \eps
\]
and $g'(y) := g(y)$ for all $y \in X \sm \{x\}$. Note that $g(x) \ge \eps$,
thus in fact $g(x) \ge 2\eps$ and $g'(x) \ge 0$.
Since $g'(x) + g'(y) = g(x) + g(y) - 2\eps \ge d(x,y)$ for all 
$y \in X \sm \{x\}$, it follows that $g' \in \cF$. This shows that
every minimal element $f'$ of $\cF$ satisfies $\bigcup A(f') = X$,
that is, $f' \in \Ex{X}$. The existence of some minimal element is 
obvious if $X$ is countable and a consequence of Zorn's Lemma in the 
general case. 
\end{proof}

We now state the concluding result of this section.
A metric space $X$ with integer valued metric will be called 
{\em discretely path-connected} if for every pair of points 
$x,y \in X$ there exists a {\em discrete path} 
$\gam \colon \{0,1,\dots,l\} \to X$ from $x$ to $y$, that is, 
$\gam(0) = x$, $\gam(l) = y$, and $d(\gam(k-1),\gam(k)) = 1$ 
for $k = 1,\dots,l$. 

\begin{Thm} \label{Thm:poly}
Let $X$ be a metric space with integer valued metric.
Suppose that for every $f \in \E{X}$ there exist
$\eps,N > 0$ such that $\rk(A(g)) \le N$ for
all $g \in \Ex{X}$ with $\|f - g\|_\infty < \eps$.
Then:
\begin{enumerate}
\item[\rm (1)]
$\Ex{X} = \E{X}$.
\item[\rm (2)]
$\cP = \{P(A)\}_{A\in \cA(X)}$ is a polyhedral structure on $\E{X}$ with 
locally finite dimension, 
where $P(A')$ is a face of $P(A)$ if and only if $A \sub A'$.
\item[\rm (3)]
For every $n \ge 1$ and $D > 0$, $\cP$ has only finitely many isometry
types of $n$-cells with diameter at most $D$.
If, in addition, $X$ is discretely path-connected,
then for every $n$ there are only finitely many 
isometry types of $n$-cells.
\end{enumerate}
\end{Thm}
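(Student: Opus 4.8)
The plan is to reduce all three assertions to a single \emph{local finiteness} statement and then read them off from Theorem~\ref{Thm:pa} and Proposition~\ref{Prop:p-properties}. Concretely, I would first prove a local finiteness lemma: \emph{for every $f \in \E{X}$ there is a ball $U = \{g : \|f-g\|_\infty < \eps/2\}$ that meets only finitely many of the sets $P(A)$, $A \in \cA(X)$.} To prove it, fix $f$ and take $\eps,N>0$ as in the hypothesis, so that $\rk(A(g)) \le N$ for every $g \in \Ex{X}$ with $\|f-g\|_\infty < \eps$. The crux is to show that the $g \in \Ex{X} \cap U$ realize only finitely many edge sets $A(g)$. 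Here I would exploit integrality together with the rank bound: by Lemma~\ref{Lem:ha} each such $A(g)$ has at most $N$ even components, and an edge $\{x,y\}$ can lie in some $A(g)$ only if $0 \le f(x)+f(y)-d(x,y) < \eps$; combining the bipartite structure~\eqref{eq:par}, \eqref{eq:g-h} of the (at most $N$) even components with the fact that $d$ is $\Z$-valued should leave only finitely many admissible local configurations. Each resulting cell has dimension $\le N$ and hence, by Theorem~\ref{Thm:pa}(3), only finitely many faces, and together these exhaust the cells meeting $U$. I expect this finiteness to be \emph{the main obstacle}: a priori there may be infinitely many relevant edges (think of the star of a point in $A(d_z)$), and it is only the interplay of the uniform rank bound with integrality that tames them.

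Granting the lemma, part~(1) is immediate. By Proposition~\ref{Prop:e-dense} the set $\Ex{X}$ is dense in $\E{X}$, while locally it is a finite union of the compact polytopes $P(A)$ (Theorem~\ref{Thm:pa}(1)) and therefore closed; a dense closed subset of $\E{X}$ is all of $\E{X}$, so $\Ex{X} = \E{X}$.

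For part~(2) I would verify the polyhedral complex axioms for $\cP = \{P(A)\}$. Each $P(A)$ with $\rk(A)=n\ge 1$ is an $n$-polytope and each with $\rk(A)=0$ is a point, by Theorem~\ref{Thm:pa}(1), and the faces of $P(A)$ are exactly the $P(A')$ with $A \sub A'$, which is the asserted face relation, by Theorem~\ref{Thm:pa}(3). For the intersection property I would use $P(A_0) \cap P(A_1) = \E{X} \cap H(A_0) \cap H(A_1) = \E{X} \cap H(A_0 \cup A_1)$ from~\eqref{eq:pg-char}; when this is non-empty, Proposition~\ref{Prop:p-properties}(2) identifies it with a cell $P(A')$ for which $A_0 \cup A_1 \sub A'$, so it is a common face of $P(A_0)$ and $P(A_1)$. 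The covering $\bigcup \cP = \Ex{X} = \E{X}$ is part~(1); local finiteness of the number of cells and of the dimension come from the lemma and the hypothesis.

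For part~(3) I would read off the isometry type of an $n$-cell from the proof of Theorem~\ref{Thm:pa}: $P(A)$ is isometric to the polytope $Q \sub l^n_\infty$ cut out by the $\le 2n^2$ inequalities~\eqref{eq:def-q-1}, \eqref{eq:def-q-2}, and the translation-invariant data determining $Q$ up to isometry (the widths $-\bar C_{k,1}-\bar C_{k,-1}$ and the analogous combinations of the cross constants) lie in $\tfrac12\Z$. If $\diam(P(A)) \le D$ these data are also bounded, so only finitely many $Q$, and hence finitely many isometry types, occur; this gives the first assertion. For the second assertion it then suffices to bound the diameter of every $n$-cell by a constant depending only on $n$, and here discrete path-connectedness is essential (a two-point space at distance $D$ shows the claim fails without it). I would show $\diam P(A) \le \max_k \delta_k$, where $\delta_k$ is the length of a shortest $A$-edge joining the two sides $[x_k]_1,[x_k]_{-1}$ of the $k$-th even component, by using that the extent of $g(x_k)$ over $P(A)$ is controlled by the nonnegativity of $g$ on these two sides; then I would bound each $\delta_k$ by subdividing such a shortest cross-edge along a discrete path and producing a strictly shorter admissible cross-edge whenever $\delta_k$ is too large. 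I expect this diameter bound to be the delicate point of part~(3).
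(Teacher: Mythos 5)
Your reduction of all three parts to a single local finiteness lemma fails at the lemma itself: under the hypotheses of Theorem~\ref{Thm:poly} it is \emph{false} that every $f \in \E{X}$ has a neighbourhood meeting only finitely many cells $P(A)$. Take $X = \{z\} \cup \{y_i\}_{i \in I}$ with $I$ infinite, $d(z,y_i) = 1$ and $d(y_i,y_j) = 2$ for $i \ne j$. The metric is integer valued, and a short computation shows that $\E{X} = \Ex{X}$ is a star of $|I|$ segments of length $1$ glued at $d_z$: the extremal functions other than the $d_{y_i}$ and $d_z$ are given by $f(z) = t$, $f(y_{i_0}) = 1 - t$, $f(y_j) = 1 + t$ for $j \ne i_0$, with $t \in (0,1)$. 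Hence $\rk(A(g)) \le 1$ for every $g \in \Ex{X}$ and the hypothesis of the theorem holds with $N = 1$ and any $\eps$, yet infinitely many $1$-cells contain the vertex $d_z$. This is precisely why the theorem asserts only \emph{locally finite dimension}, and why local finiteness of $\cP$ is obtained in Theorem~\ref{Thm:intro-ex} only under the additional hypotheses (finite bounded sets, stable intervals) via part~(2) of Proposition~\ref{Prop:loc-finite}. Consequently your proof of~(1) collapses: you cannot argue that $\Ex{X}$ is locally a finite union of compact polytopes and hence closed. The paper's argument for~(1) is of a different nature: if $g \in \Ex{X}$ has $\rk(A(g)) = n$, then by~\eqref{eq:f-even} and~\eqref{eq:f-odd} its values lie in $\Z + R_g$ for some $R_g \sub [0,1)$ with $|R_g| \le 2n+2$; taking $f_i \to f$ with $f_i \in \Ex{X}$ of constant rank, one gets $f(X) \sub \Z + R$ for a finite $R$, so the supremum in~\eqref{eq:extr-sup} is attained and $f \in \Ex{X}$.

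Two further points. In~(2), Proposition~\ref{Prop:p-properties}(2) concerns $A_0 \cap A_1$ and the existence of a common cell \emph{containing} $P(A_0) \cup P(A_1)$; it says nothing about $P(A_0) \cap P(A_1) = \E{X} \cap H(A_0 \cup A_1)$ being a cell, so it cannot be cited for the intersection axiom. The paper argues instead by taking, for $i=0,1$, the minimal face of $P(A_i)$ containing $C := P(A_0) \cap P(A_1)$ and showing that these two faces have intersecting relative interiors, hence coincide with $C$, using Theorem~\ref{Thm:pa}(2). In~(3), your reduction $\diam P(A) \le \max_k \delta_k$ is sound, but the step bounding $\delta_k$ by ``producing a strictly shorter admissible cross-edge'' is doubtful: a pair $(x',y')$ at distance $1$ with $x' \in \vc{x_k}_1$ and $y' \in \vc{x_k}_{-1}$ obtained by following a discrete path need not satisfy $g(x')+g(y')=d(x',y')$, so subdivision does not produce edges of $A$. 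The paper sidesteps admissibility entirely: it picks any pair $(x,y)$ with $d(x,y) = 1$, $x \in \vc{x_1}_1$, and either $y \in \vc{x_1}_{-1}$ or $y \notin \vc{x_1}$, and bounds $\|g-h\|_\infty = |g(x)-h(x)| \le 2$ directly from~\eqref{eq:g-h} and the fact that $g,h$ are $1$-Lipschitz, so that all edges of $\cP$ have length in $\{\tfrac12,1,\tfrac32,2\}$.
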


\begin{proof}
For~(1), let $f \in \E{X}$.
By Proposition~\ref{Prop:e-dense} there exists a sequence
$(f_i)$ in $\Ex{X}$ that converges to $f$, and by the assumption of the 
theorem there is no loss of generality in assuming that $\rk(A(f_i)) = n$
for all $i$ and for some $n \ge 0$. It follows that for every $i$ there 
exists a set $R_i \sub [0,1)$ with $|R_i| \le 2n + 2$ such that $f_i$ takes 
values in $\Z + R_i$; see~\eqref{eq:f-even} and~\eqref{eq:f-odd}. 
Since $f_i \to f$, there also exists $R \sub [0,1)$ with $|R| \le 2n + 2$ 
such that $f(X) \sub \Z + R$.
But then the supremum in~\eqref{eq:extr-sup} is attained for every $x \in X$,
and so $f \in \Ex{X}$. (Compare~\cite[(5.19)]{Dre}.)

The union of the family $\cP = \{P(A)\}_{A \in \cA(X)}$ equals $\Ex{X} = \E{X}$. 
In view of Theorem~\ref{Thm:pa}, for~(2) it remains
to show that if $A_1,A_2 \in \cA(X)$ and 
$C := P(A_1) \cap P(A_2) \ne \es$, then
$C \in \cP$. For $i = 1,2$, let $P(A'_i)$ be the minimal face of 
$P(A_i)$ that contains $C$. By convexity, $C$ has non-empty interior 
relative to its affine hull in $\R^X$, hence the relative interiors of 
$P(A'_1)$ and $P(A'_1)$ have a common point. It follows that
$A'_1 = A'_2$ and thus $P(A'_1) = P(A'_2) = C$.

As for~(3), we first observe that if $f \in \Ex{X}$ is a vertex of $\cP$, 
then $\rk(A(f)) = 0$ and so $f(X) \sub \frac12\Z$ by~\eqref{eq:f-odd}.
In particular, all edges of $\cP$ have length in $\frac12\Z$.
Now we show that if $X$ is discretely path-connected,
then all edges have length at most $2$. 
Suppose that $A \in \cA(X)$, $\rk(A) = 1$, 
and $x_1$ is a point in the only even $A$-component of $X$. 
Then clearly there exists a pair $(x,y)$ with $d(x,y) = 1$ such that 
$x \in \vc{x_1}_1$ and either $y \in \vc{x_1}_{-1}$ 
or $y \in X \sm \vc{x_1}$. Let $g,h \in P(A)$. By~\eqref{eq:g-h},
\[
\|g - h\|_\infty = |g(x) - h(x)|.
\]
In case~$y \in \vc{x_1}_{-1}$, we have furthermore 
$g(x) - h(x) = -(g(y) - h(y))$;
since $g,h$ are $1$-Lipschitz and $d(x,y) = 1$, it follows that 
\begin{align*}
2|g(x) - h(x)| 
&= |g(x) - h(x) - (g(y) - h(y))| \\
&\le |g(x) - g(y)| + |h(y) - h(x)| \le 2. 
\end{align*}
In case~$y \in X \sm \vc{x_1}$, we have $g(y) = h(y)$ and so 
\[
|g(x) - h(x)| \le |g(x) - g(y)| + |h(y) - h(x)| \le 2.
\]
In either case, $\|g - h\|_\infty \le 2$.
Hence the edge $P(A)$ has length at most $2$.
Finally, for $n \ge 2$, we see from~\eqref{eq:def-q-1}, \eqref{eq:def-q-2} 
that there are only finitely many isometry types of 
$n$-cells with diameter at most $D > 0$ and edge lengths in $\frac12\Z$,
and only finitely many isometry types of $n$-cells with edge lengths in
$\{\frac12,1,\frac32,2\}$.
\end{proof}
 
For illustration, we give two simple examples of (finite) discretely 
geodesic metric spaces with two-dimensional injective hulls. 
Together they show in particular that the four possible lengths
$\frac12,1,\frac32,2$ for edges in $\cP$, as just discussed, do indeed occur.
For points $v_1,\dots,v_n$ in a vector space $V$ we will denote by
$[v_1,\dots,v_n]$ the convex hull $\{\sum_{i=1}^n \lam_iv_i : \lam_i \ge 0,\,
\sum_{i=1}^n \lam_i = 1\}$.

\begin{Expl}
Let $X = \{x_1,x_2,y_1,y_2,y_3\}$ be the 
metric space with $d(x_1,x_2) = 2$, $d(x_i,y_j) = 1$, 
and $d(y_j,y_k) = 2$ ($j < k$). 
Then $P' := [d_{x_1},d_{x_2}]$ is an edge of $\cP$ of length $2$. 
The maximal cells of the injective hull $\E{X}$ (that is, of the partially 
ordered set $\cP$) are the three triangles 
$P_j := [d_{x_1},d_{x_2},d_{y_j}]$, isometric to $[(1,0),(-1,0),(0,1)]$ in 
$l^2_\infty$. They are glued along $P'$. For instance, 
$A_1 := \{\{x_1,x_2\},\{y_1,y_2\},\{y_1,y_3\}\}$
and $A' := A_1 \cup \{\{y_2,y_3\}\}$ are the respective admissible
edge sets with $P(A_1) = P_1$ and $P(A') = P'$.
\end{Expl}

\begin{Expl}
Consider the six-point metric space $X = \{x_1,x_2,y_1,y_2,y_3,z\}$, 
where all distances between distinct points are $1$ except that 
$d(x_i,z) = 2$ and $d(y_j,y_k) = 2$ ($j < k$). 
Besides the distance functions to the elements
of $X$, the injective hull $\E{X}$ has four additional vertices 
$f_1,f_2,f_3,g$, where $f_j = \frac12$ on $\{x_1,x_2,y_j\}$, 
$f_j = \frac32$ otherwise, $g = \frac12$ on $\{x_1,x_2\}$, $g = 1$ on 
$\{y_1,y_2,y_3\}$,
and $g(z) = \frac32$. The graph $(X,A(g))$
consists of two $3$-cycles, with vertices $x_1,x_2,z$ and $y_1,y_2,y_3$,
respectively. By deleting one of the six edges of this graph one obtains
the graph corresponding to an edge of $\E{X}$ that connects $g$ with
one of $d_{x_1},d_{x_2},d_z,f_1,f_2,f_3$. All of these edges have length 
$\frac12$, except for $[g,d_z]$, which has length $\frac32$. 
For instance, deletion of $\{y_1,y_2\}$ gives the graph for $[g,f_3]$. 
The maximal cells of $\E{X}$ are the six triangles $[d_{x_i},f_j,g]$, 
isometric to $[(\frac12,0),(0,\frac12),(0,0)]$ in $l^2_\infty$,
and the three quadrilaterals $[g,f_j,d_{y_j},d_z]$, isometric to 
$[(0,0),(0,\frac12),(-\frac12,1),(0,-\frac32)]$ in $l^2_\infty$.
\end{Expl}


\section{Cones} \label{Sect:cones}

We now discuss geometric conditions that allow to verify the assumption on
the rank in Theorem~\ref{Thm:poly}. 
Cones, as defined in~\eqref{eq:c}, will be instrumental. 
We start with a basic fact.
 
\begin{Lem} \label{Lem:co}
Suppose that $X$ is a metric space, $f \in \Dl{X}$, and $\{x,y\} \in A(f)$.
Then $\{x,z\} \in A(f)$ and $f(z) = f(y) + d(y,z)$ for all $z \in \co(x,y)$. 
\end{Lem}

\begin{proof}
For $f \in \D{X}$ and $z \in \co(x,y)$, we have
\[
f(x) + f(z) \ge d(x,z) = d(x,y) + d(y,z).
\]
Furthermore, if $f \in \Lip_1(X,\R)$ and $\{x,y\} \in A(f)$, then
\[
f(x) + f(z) \le f(x) + f(y) + d(y,z) = d(x,y) + d(y,z).
\]
This gives the result.
\end{proof}

The next lemma, in particular criterion~(4), 
will play a key role in the proof of Theorem~\ref{Thm:intro-ex}.
(For~(2), compare~\cite[Theorem~3.12]{GooM}.)
 
\begin{Lem} \label{Lem:xyxy}
Let $X$ be a metric space, and suppose 
that $f \in \D{X}$, $x,y,\bar x,\bar y \in X$, and 
$\{x,y\},\{\bar x,\bar y\} \in A(f)$. Then each of the following
conditions implies that also $\{x,\bar y\},\{\bar x,y\} \in A(f)$:
\begin{enumerate}
\item[\rm (1)]
$d(x,y) + d(\bar x,\bar y) \le d(x,\bar y) + d(\bar x,y)$;
\item[\rm (2)]
$\co(x,y) \cap \co(\bar x,\bar y) \ne \es$;
\item[\rm (3)]
$\bw(x,\bar y) \cap \bw(\bar x,y) \ne \es$;
\item[\rm (4)]
there exists $v \in \bw(x,y) \cap \bw(\bar x,\bar y)$ 
such that $\co(x,v) = \co(\bar x,v)$.
\end{enumerate}
\end{Lem}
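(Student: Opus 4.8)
The plan is to single out condition~(1) as the crux and then reduce the other three to it. The starting point is that $f \in \D{X}$ always provides $f(x) + f(\bar y) \ge d(x,\bar y)$ and $f(\bar x) + f(y) \ge d(\bar x,y)$. Adding these two inequalities and using the hypotheses $f(x)+f(y) = d(x,y)$ and $f(\bar x)+f(\bar y) = d(\bar x,\bar y)$, the left-hand side collapses to $d(x,y) + d(\bar x,\bar y)$, so that
\[
d(x,\bar y) + d(\bar x,y) \le d(x,y) + d(\bar x,\bar y)
\]
holds unconditionally. Under hypothesis~(1) this is paired with the reverse inequality, forcing equality throughout. Since here a sum of two inequalities of the form $\ge$ has become an equality, each summand must individually be an equality; that is, $f(x)+f(\bar y) = d(x,\bar y)$ and $f(\bar x)+f(y) = d(\bar x,y)$, which means $\{x,\bar y\},\{\bar x,y\} \in A(f)$. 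This settles~(1).

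Next I would reduce~(2) and~(3) to~(1), each by a short triangle-inequality computation. For~(3), choosing $v \in \bw(x,\bar y) \cap \bw(\bar x,y)$ and adding the defining identities $d(x,v)+d(v,\bar y) = d(x,\bar y)$ and $d(\bar x,v)+d(v,y) = d(\bar x,y)$, the two triangle inequalities $d(x,y) \le d(x,v)+d(v,y)$ and $d(\bar x,\bar y) \le d(\bar x,v)+d(v,\bar y)$ immediately yield~(1). For~(2), I would take $w \in \co(x,y) \cap \co(\bar x,\bar y)$, use the cone relations to rewrite $d(x,y) = d(x,w) - d(y,w)$ and $d(\bar x,\bar y) = d(\bar x,w) - d(\bar y,w)$, and then apply $d(x,\bar y) \ge d(x,w) - d(\bar y,w)$ and $d(\bar x,y) \ge d(\bar x,w) - d(y,w)$ to recover~(1) once more.

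Finally, I would show that~(4) in fact implies~(3), with the same witness $v$. Directly from the definition of the cone, $v \in \bw(x,y)$ gives $y \in \co(x,v)$, and $v \in \bw(\bar x,\bar y)$ gives $\bar y \in \co(\bar x,v)$. The hypothesis $\co(x,v) = \co(\bar x,v)$ then lets me exchange the two cones: from $y \in \co(\bar x,v)$ we read off $v \in \bw(\bar x,y)$, and from $\bar y \in \co(x,v)$ we read off $v \in \bw(x,\bar y)$. Hence $v \in \bw(x,\bar y) \cap \bw(\bar x,y)$, which is exactly~(3). The only place where any care is needed is in correctly unwinding the dual definitions of interval and cone in this last reduction; everything else is routine bookkeeping with the triangle inequality, so I do not expect a genuine obstacle.
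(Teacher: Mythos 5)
Your proposal is correct and follows essentially the same route as the paper: the direct verification of~(1) via the two inequalities from $f \in \D{X}$, the reduction of~(2) and~(3) to~(1) by the triangle inequality, and the reduction of~(4) to~(3) by swapping the two equal cones. The only differences are cosmetic (e.g., writing the cone relations with subtractions rather than additions in the reduction of~(2)).
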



\begin{proof}
Because $\{x,y\},\{\bar x,\bar y\} \in A(f)$, (1) gives 
\[
f(x) + f(y) + f(\bar x) + f(\bar y) 
= d(x,y) + d(\bar x,\bar y)
\le d(x,\bar y) + d(\bar x,y).
\]
It follows that each of the inequalities $f(x) + f(\bar y) \ge d(x,\bar y)$ 
and $f(\bar x) + f(y) \ge d(\bar x,y)$ must in fact be an equality, that is,
$\{x,\bar y\},\{\bar x,y\} \in A(f)$.
Now assume that~(2) holds, and let $z \in \co(x,y) \cap \co(\bar x,\bar y)$.
Then 
\[
d(x,y) + d(y,z) = d(x,z) \le d(x,\bar y) + d(\bar y,z)
\]
and, likewise, $d(\bar x,\bar y) + d(\bar y,z) \le  d(\bar x,y) + d(y,z)$. 
Adding these two inequalities one obtains~(1).
If $v \in \bw(x,\bar y) \cap \bw(\bar x,y)$, then
\[
d(x,y) + d(\bar x,\bar y)
\le d(x,v) + d(v,y) + d(\bar x,v) + d(v,\bar y)
= d(x,\bar y) + d(\bar x,y),
\]
so~(3) implies~(1) as well. Finally, if (4) holds, then 
$\bar y \in \co(\bar x,v) = \co(x,v)$ and $y \in \co(x,v) = \co(\bar x,v)$, 
thus $v \in \bw(x,\bar y) \cap \bw(\bar x,y)$.
\end{proof}

As a first simple application of these lemmas we note the following 
result.

\begin{Prop} \label{Prop:disj-cones}
Suppose that $X$ is a metric space containing at most $k$ pairwise disjoint
cones, that is, $|I| \le k$ for every disjoint family 
$(\co(x_i,y_i))_{i \in I}$ of cones in $X$.
Then $\rk(A) \le \frac12 k$ for all $A \in \cA(X)$. 
\end{Prop}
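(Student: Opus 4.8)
The plan is to exhibit $2\rk(A)$ pairwise disjoint cones in $X$, which immediately yields $2\rk(A) \le k$ and hence $\rk(A) \le \frac12 k$. First I would fix a function $f \in \Ex{X}$ with $A(f) = A$; then $f \in \D{X}$, and since $\bigcup A = X$ the graph $(X,A)$ has no isolated vertices. Recall that $\rk(A)$ equals the number of even $A$-components. For each even $A$-component $\vc{x}$ I would select a single edge $\{x,y\} \in A$ with both endpoints in $\vc{x}$; such an edge exists as there are no isolated vertices, and necessarily $x \ne y$ because an even component contains no loop (a loop $\{x,x\}$ being an $A$-cycle of odd length $1$). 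To this edge I associate the two oppositely directed cones $\co(x,y)$ and $\co(y,x)$.

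The next step is to verify that the two cones coming from one edge are disjoint. If $z \in \co(x,y) \cap \co(y,x)$, then $d(x,y) + d(y,z) = d(x,z)$ and $d(x,y) + d(x,z) = d(y,z)$; adding these and cancelling yields $2\,d(x,y) = 0$, contradicting $x \ne y$. Hence $\co(x,y) \cap \co(y,x) = \es$. This computation is purely metric and requires nothing about $f$.

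The key step is to show that cones arising from edges lying in two \emph{distinct} even $A$-components are disjoint. Let $\{x,y\}$ lie in one component and $\{\bar x, \bar y\}$ in another. Here Lemma~\ref{Lem:xyxy}(2) does all the work: were any of the four intersections $\co(x,y) \cap \co(\bar x, \bar y)$, $\co(x,y) \cap \co(\bar y, \bar x)$, $\co(y,x) \cap \co(\bar x, \bar y)$, or $\co(y,x) \cap \co(\bar y, \bar x)$ non-empty, the lemma (applied to the correspondingly directed pair of edges) would force an $A$-edge joining the two edges — namely $\{x,\bar y\}$, $\{x,\bar x\}$, $\{y,\bar y\}$, or $\{y,\bar x\}$ respectively — and so place the two edges in a common $A$-component, contrary to assumption. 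Thus all such cones are pairwise disjoint.

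Combining the two observations, the family of $2\rk(A)$ cones obtained by ranging over the even $A$-components is pairwise disjoint, so the hypothesis on $X$ gives $2\rk(A) \le k$, as desired. I do not expect a genuine obstacle: the only care needed is the bookkeeping of the four directed pairings in the cross-component case and checking that each, through Lemma~\ref{Lem:xyxy}(2), produces an edge spanning the two components. Note also that the argument covers $\rk(A) = \infty$ automatically, since that would produce infinitely many pairwise disjoint cones, which is incompatible with a finite bound $k$.
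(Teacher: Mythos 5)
Your proof is correct and follows essentially the same route as the paper: select one edge from each even $A$-component and show the resulting $2\rk(A)$ cones are pairwise disjoint, using Lemma~\ref{Lem:xyxy} for the cross-component pairs. The only cosmetic difference is that for the two oppositely directed cones on a single edge you use a direct metric computation, where the paper invokes Lemma~\ref{Lem:co}; both work.
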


\begin{proof}
Let $A \in \cA(X)$, and suppose that the two edges 
$\{x,y\},\{\bar x,\bar y\} \in A$ belong to different even 
$A$-components of $X$. It follows from either Lemma~\ref{Lem:co} or 
Lemma~\ref{Lem:xyxy} that the four cones 
$\co(x,y),\co(y,x),\co(\bar x,\bar y),\co(\bar y,\bar x)$ 
are pairwise disjoint.
For instance, if there was a point $z$ in $\co(x,y) \cap \co(\bar x,\bar y)$, 
then $\{x,z\},\{\bar x,z\} \in A$ by the first and 
$\{x,\bar y\},\{\bar x,y\} \in A$ by the second lemma, 
so $x$ and $\bar x$ would be connected by $A$-paths of length~2. 
This clearly gives the result.
\end{proof}
 
An example will be given below. First we record
another useful fact related to cones. 

\begin{Prop} \label{Prop:y-x}
Let $Y$ be a metric space, and let $X$ be a non-empty subset.
If for every pair of points $x,y \in Y$ there exists 
a point $z \in \co(x,y) \cap X$, then $\E{Y}$ is isometric to $\E{X}$
via the restriction map $f \mapsto f|_X$.
\end{Prop}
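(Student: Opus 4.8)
The plan is to show that the restriction map $r \colon \E{Y} \to \E{X}$, $f \mapsto f|_X$, is a well-defined, distance-preserving bijection. The whole argument rests on a single observation: the cone hypothesis lets one evaluate the extremal supremum~\eqref{eq:extr-sup} for any $f \in \E{Y}$ using only the values of $f$ on $X$. Concretely, I claim that
\[
f(x) = \sup_{z \in X}(d(x,z) - f(z)) \qquad \text{for all } x \in Y.
\]
To see this, fix $x,y \in Y$ and pick, by hypothesis, a point $z \in \co(x,y) \cap X$, so that $d(x,z) = d(x,y) + d(y,z)$. Since every $f \in \E{Y}$ is $1$-Lipschitz, $f(z) \le f(y) + d(y,z)$, and therefore $d(x,z) - f(z) \ge d(x,y) - f(y)$. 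Thus the supremum of $d(x,\cdot) - f$ over $Y$ is already attained (in the limit) along points of $X$; combined with $X \subset Y$ and~\eqref{eq:extr-sup}, this yields the displayed identity.

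Granting this, the remaining steps are formal. First, restricting the identity to $x \in X$ shows that $f|_X$ satisfies~\eqref{eq:extr-sup} on $X$, and since $f|_X \in \D{X}$ trivially, $f|_X \in \E{X}$; hence $r$ is well defined. Second, $r$ is an isometry: for $f,g \in \E{Y}$ and any $x \in Y$, using the identity for $f$ and the inequality $d(x,z) - g(z) \le g(x)$ valid for each $z \in X$, one obtains $f(x) \le g(x) + \|f|_X - g|_X\|_\infty$, and symmetrically; taking the supremum over $x \in Y$ gives $\|f - g\|_\infty \le \|f|_X - g|_X\|_\infty$, while the reverse inequality is immediate. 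In particular $r$ is injective.

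Finally, for surjectivity I would invoke Proposition~\ref{Prop:x-subspace}(1) with $X' = Y$: it provides an isometric embedding $\iota \colon \E{X} \to \E{Y}$ with $\iota(g)|_X = g$ for every $g \in \E{X}$, so that $r \circ \iota = \id_{\E{X}}$ and $r$ is onto. Together with the previous paragraph, $r$ is a surjective isometry, proving that $\E{Y}$ and $\E{X}$ are isometric via restriction. The one genuinely load-bearing point is the supremum-reduction identity; everything else is bookkeeping. The subtlety to watch is that the cone condition must be used for arbitrary pairs in $Y$ (not merely pairs with $x \in X$): this is exactly what makes the identity hold on all of $Y$, and hence forces $r$ to be an isometry rather than merely a $1$-Lipschitz surjection.
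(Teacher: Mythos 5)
Your proposal is correct and follows essentially the same route as the paper: establish the identity $f(x) = \sup_{z \in X}(d(x,z) - f(z))$ for all $x \in Y$ via the cone hypothesis, deduce that restriction lands in $\E{X}$ and preserves distances, and then get surjectivity from Proposition~\ref{Prop:x-subspace}. The only (immaterial) difference is that you invoke part~(1) of that proposition for surjectivity while the paper cites part~(2); both yield an extension $g' \in \E{Y}$ of any $g \in \E{X}$, so the argument goes through either way.
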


\begin{proof}
Let $f \in \E{Y}$, and let $x \in Y$. For every 
$\eps > 0$ there exist $y \in Y$ and $z \in X$ such that 
$f(x) + f(y) \le d(x,y) + \eps$ and $d(x,y) + d(y,z) = d(x,z)$;
since $f(z) \le f(y) + d(y,z)$, this gives 
$f(x) + f(z) \le d(x,z) + \eps$. Hence
\begin{equation} \label{eq:sup-z-x}
f(x) = \sup_{z \in X}(d(x,z) - f(z)).
\end{equation}
For $x \in X$, this shows that $f|_X \in \E{X}$.
Furthermore, for another function  $g \in \E{Y}$, 
combining~\eqref{eq:sup-z-x} with the inequality $d(x,z) \le g(x) + g(z)$ 
we conclude that $f(x) - g(x) \le \sup_{z \in X}(g(z) - f(z))$
for every $x \in Y$. So $\|f-g\|_\infty = \|f|_X - g|_X\|_\infty$,
and by the second part of Proposition~\ref{Prop:x-subspace}, 
the restriction operator $f \mapsto f|_X$ maps $\E{Y}$ onto $\E{X}$.
\end{proof}

We now illustrate Proposition~\ref{Prop:disj-cones}, 
which turns out to be optimal in some instances.

\begin{Expl} \label{Expl:zn}
Consider the discretely geodesic metric space $X = \Z^n$ with the 
$l_1$ distance (the standard word metric of the group $\Z^n$). 
It is not difficult to see that $X$ contains at most $2^n$ pairwise 
disjoint cones.
By Theorem~\ref{Thm:poly} and Proposition~\ref{Prop:disj-cones}, 
$\E{X}$ is a polyhedral complex of dimension at most $2^{n-1}$.
For the subspace $W_n := \{0,1\}^n \sub X$ of diameter $n$, 
the constant function $g$ on $W_n$ with value $\frac12 n$ satisfies 
$g \in \E{W_n}$ and $\rk(A(g)) = 2^{n-1}$ (each pair of antipodal points is an 
$A(g)$-component of $W_n$); thus $\dim(\E{W_n}) = \dim(\E{X}) = 2^{n-1}$. 
Furthermore, it follows easily from Proposition~\ref{Prop:y-x} that 
$\E{X}$ is isometric to $\E{l^n_1}$. So $\E{X}$ is also a Banach space, 
isometric to $l^{2^{n-1}}_\infty$ (see Remark~\ref{Rem:banach}).
Unless $n = 1,2$, the dimension of $\E{X}$
is strictly larger than $n$ and hence the canonical action of $\Z^n$ on
$\E{X}$ is not cocompact. This can be remedied by taking 
the $l_\infty$ distance on $\Z^n$ instead (which is again a word metric);
then clearly the injective hull is isometric to $l^n_\infty$.
\end{Expl}

Given a metric space $X$ and a point $v \in X$,
we denote by $\cC(v)$ the set of all cones $\co(x,v)$ for $x \in X$.
The following result shows that if $\cC(v)$ happens to be finite,
then one obtains some control on the complexity of $\Ex{X}$ near $d_v$.
Note that here $X$ is not assumed to be discrete.  

\begin{Prop} \label{Prop:cv-finite}
Suppose that $X$ is a metric space and 
$v \in X$ is a point with $|\cC(v)| < \infty$. 
Consider the set $A_v := A(d_v) \in \cA(X)$. Then:
\begin{enumerate}
\item[\rm (1)]
Every admissible set $A \in \cA(X)$ with $A \sub A_v$ 
satisfies $\rk(A) \le \frac12|\cC(v)|$.
\item[\rm (2)]
There are at most $2^{|\cC(v)|-1} - 1$ sets $A \in \cA(X)$ with $A \sub A_v$
and $\rk(A) = 1$.
\end{enumerate}
\end{Prop}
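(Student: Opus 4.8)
The common engine for both parts is Lemma~\ref{Lem:xyxy}(4), applied with the witness point $w=v$. Fix $f\in\Ex{X}$ with $A(f)=A$ and consider the surjection $\Phi\colon X\to\cC(v)$, $\Phi(x)=\co(x,v)$. Since $A\sub A_v=A(d_v)$, every edge $\{x,y\}\in A$ satisfies $v\in\bw(x,y)$; hence if $\{x,y\},\{\bar x,\bar y\}\in A$ and $\co(x,v)=\co(\bar x,v)$, then $v\in\bw(x,y)\cap\bw(\bar x,\bar y)$ and the lemma gives $\{x,\bar y\},\{\bar x,y\}\in A$, so $x$ and $\bar x$ are joined by the $A$-path $(x,\bar y,\bar x)$ of even length~$2$. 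By~\eqref{eq:g-h} this forces $x$ and $\bar x$ into the \emph{same} class of the bipartition~\eqref{eq:par} of their common even $A$-component. The key fact is therefore: \emph{no fibre of $\Phi$ meets two distinct bipartition classes.}

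Part~(1) follows at once. Let $\vc{x_1},\dots,\vc{x_n}$ be the even $A$-components, so $n=\rk(A)$, with partitions $\vc{x_k}=\vc{x_k}_1\cup\vc{x_k}_{-1}$. The $2n$ sets $\vc{x_k}_\sig$ are nonempty, and by the key fact $\Phi$ maps any two distinct ones into disjoint subsets of $\cC(v)$. Thus $\cC(v)$ contains $2n$ pairwise disjoint nonempty subsets, whence $2n\le|\cC(v)|$.

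For Part~(2) the plan is to build an injection from the rank-one sets $A\sub A_v$ into the nonempty subsets of $\cC(v)\sm\{\co(v,v)\}$, of which there are $2^{|\cC(v)|-1}-1$ (note $\co(v,v)=X\in\cC(v)$). Write $\vc{z}=\vc{z}_1\cup\vc{z}_{-1}$ for the unique even $A$-component. Since $A\sub A_v$ with $A\neq A_v$ and $\rk(A_v)=0$, the vertex $d_v=P(A_v)$ is an endpoint of the edge $P(A)$ by Theorem~\ref{Thm:pa}(3); so along $H(A)$ I would write $g_s=d_v+s\,\psi$ with $\psi=\mathbf 1_{\vc{z}_1}-\mathbf 1_{\vc{z}_{-1}}$, the sign chosen so that $P(A)$ lies on the side $s\ge0$. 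Then $g_s\in\D{X}$ forces the validity inequality $\psi(x)+\psi(y)\ge0$ for all $\{x,y\}\in A_v$. Applying this to the pairs $\{v,y\}$ (which lie in $A_v$ for \emph{every} $y$, as $d_v(v)=0$) and using that both $\vc{z}_1,\vc{z}_{-1}$ are nonempty shows $\psi(v)=1$, i.e.\ $v\in\vc{z}_1$ and $\co(v,v)\in\Phi(\vc{z}_1)$. I would then send $A$ to $S:=\Phi(\vc{z}_{-1})$, a nonempty subset of $\cC(v)$ avoiding $\co(v,v)$.

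The crux, and the step I expect to be the main obstacle, is injectivity of $A\mapsto S$, where the validity inequality does the real work. First, the key fact gives $\Phi^{-1}(S)=\vc{z}_{-1}$, so $S$ recovers $\vc{z}_{-1}$. Next, the inequality $\psi(x)+\psi(y)\ge0$ forbids on $A_v$ the patterns $(-1,-1)$ and $(-1,0)$, so every $A_v$-neighbour of a point of $\vc{z}_{-1}$ has $\psi=1$; conversely extremality of $f$ gives each point of $\vc{z}_1$ an $A$-edge into $\vc{z}_{-1}$. Hence $\vc{z}_1$ is exactly the $A_v$-neighbourhood of $\vc{z}_{-1}$, so $\psi$ — and therefore $A=\{\{x,y\}\in A_v:\psi(x)+\psi(y)=0\}$ — is determined by $S$ alone. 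The delicate point to verify carefully is precisely this reconstruction: that the two forbidden $\psi$-sum patterns pin down $\vc{z}_1$ from $\vc{z}_{-1}$ and that no odd-component point is swept into this neighbourhood, both of which rest on the validity inequality together with the disjointness from the key fact. Counting the admissible $S$ then yields $2^{|\cC(v)|-1}-1$.
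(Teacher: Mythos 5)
Your proof is correct and follows essentially the same route as the paper's: both parts hinge on Lemma~\ref{Lem:xyxy}(4) with witness $v$ (equal cones at $v$ force an $A$-path of length~$2$, hence membership in the same partition class), and in part~(2) you construct the identical injection $A \mapsto \{\co(x,v) : x \in X_{-1}\}$ into the nonempty subsets of $\cC(v)$ avoiding $\co(v,v)=X$, recovering $X_{-1}$ from the cones and then $X_1$ as the $A_v$-neighbourhood of $X_{-1}$. One small caveat: your appeal to Theorem~\ref{Thm:pa}(3) is not available here, since the proposition does not assume an integer-valued metric --- but it is also unnecessary, because any $g \in \Ex{X}$ with $A(g)=A\sub A_v$ already lies in $H(A)$ and differs from $d_v$, hence has the form $d_v + s\psi$ with $s \ne 0$, and relabelling the bipartition gives $s>0$ and with it your validity inequality.
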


Note that $\{x,y\} \in A_v = A(d_v)$ if and only if 
$d_v(x) + d_v(y) = d(x,y)$, that is, $v \in \bw(x,y)$.  

\begin{proof}
Let $A \sub A_v$ be admissible. There exists a partition
\begin{equation} \label{eq:x-partition}
X = X_0 \cup \bigcup_{j \in J}(X_{j,1} \cup X_{j,-1}),
\end{equation}
where $X_0$ is the union of all odd $A$-components
of $X$, $\{X_j\}_{j \in J}$ is the family of all even $A$-components,
and the partition $X_j = X_{j,1} \cup X_{j,-1}$ is such that 
no edge in $A$ relates points in the same subset.
Let $x,\bar x \in X$. There exist $y,\bar y$ such that 
$\{x,y\},\{\bar x,\bar y\} \in A \sub A_v$ and thus 
$v \in \bw(x,y) \cap \bw(\bar x,\bar y)$. In case $\co(x,v) = \co(\bar x,v)$  
it follows from Lemma~\ref{Lem:xyxy} that 
$\{x,\bar y\},\{\bar x,y\} \in A$, in particular $x$ and $\bar x$ are 
connected by an $A$-path of length~$2$. Hence, if $x$ and $\bar x$
lie in different sets of the above partition, then 
$\co(x,v) \ne \co(\bar x,v)$. Thus the number of even $A$-components 
is in fact finite and less than or equal to $\frac12|\cC(v)|$.

For the proof of~(2), let $\cA_v'$ denote the set of all $A \in \cA(X)$ 
with $A \sub A_v$ and $\rk(A) = 1$. We show that there is an injective map
$S$ from $\cA_v'$ into the set of all non-empty subsets of
$\cC(v)$ that do not contain $\co(v,v) = X$. 
For each $A \in \cA_v'$ there is a unique 
partition $X = X_0 \cup X_1 \cup X_{-1}$ such that every $g \in \Ex{X}$ with 
$A(g) = A$ satisfies
\begin{equation} \label{eq:g-dv}
g(x) = d_v(x) + \sig \|g-d_v\|_\infty 
\end{equation} 
for $\sig \in \{0,1,-1\}$ and $x \in X_\sig$. 
Note that every such $g$ is strictly positive since $\rk(A) > 0$, therefore 
$v \in X_1$ and thus $\co(x,v) \ne X = \co(v,v)$ for all $x \in X_{-1}$.
The desired map $S$ is defined by 
\[
S(A) := \{\co(x,v) : x \in X_{-1}\}.
\]
To show that $S$ is injective, suppose that $S(A) = S(A')$ for 
some $A,A' \in \cA_v'$, and let $X_0 \cup X_1 \cup X_{-1}$ and
$X'_0 \cup X'_1 \cup X'_{-1}$ be the respective partitions of $X$.
Now note, first, that 
\[
X_{-1} = \{x \in X : \co(x,v) \in S(A)\}.
\] 
This holds since $\co(\bar x,v) \ne \co(x,v)$ for 
$\bar x \in X_0 \cup X_1$ and $x \in X_{-1}$, 
by the same argument as in the proof of~(1). Similarly,
$X'_{-1} = \{x \in X : \co(x,v) \in S(A')\}$ and so $X_{-1} = X'_{-1}$.
Second, 
\[
X_{1} = \{y \in X : \text{there exists $x \in X_{-1}$ with 
$\{x,y\} \in A_v$}\}.
\]
The inclusion $\sub$ is clear since $A \sub A_v$.  
For the other, if $x \in X_{-1}$ and $\{x,y\} \in A_v$, 
then every $g \in \Ex{X}$ with $A(g) = A$ satisfies
$g(y) \ge d(x,y) - g(x) = d_v(x) + d_v(y) - g(x) = d_v(y) + \|g-d_v\|_\infty$,
so $y \in X_1$. Together with the corresponding characterization of 
$X'_{1}$ and the fact that $X_{-1} = X'_{-1}$, this shows that $X_1 = X'_1$
and $X_0 = X'_0$ as well. Finally, using~\eqref{eq:g-dv} again, we conclude
that $A = A'$.
\end{proof}

The bound in the first part of Proposition~\ref{Prop:cv-finite} is sharp: 

\begin{Expl}
The cyclic group of order $2n$, with the usual word metric of diameter $n$, 
satisfies $|\cC(v)| = 2n$ for every element $v$. 
The constant function $f$ with value $\frac12 n$ has $\rk(A(f)) = n$, 
and $A(f) \sub A(d_v)$ for all $v$. In fact, the injective
hull is a combinatorial $n$-cube, as is shown in~\cite[Section~9]{GooM}.
\end{Expl}

We now turn to discretely geodesic metric spaces $X$ with 
$\beta$-stable intervals, as defined in~\eqref{eq:stable}.
The following observation goes back to Cannon~\cite[Section~7]{Can}. 
For $x,v \in X$, define 
$F_{xv} \colon B(v,\beta) \to \Z$ by $F_{xv}(u) := d_x(u) - d_x(v)$.

\begin{Lem} \label{Lem:cone-type}
Let $X$ be a discretely geodesic metric space with $\beta$-stable intervals,
and let $x,x',v \in X$. 
If $F_{xv} \le F_{x'v}$, then $\co(x,v) \sub \co(x',v)$. 
Hence, $F_{xv} = F_{x'v}$ implies that $\co(x,v) = \co(x',v)$.
\end{Lem}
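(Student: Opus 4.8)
The second assertion follows at once from the first: if $F_{xv} = F_{x'v}$, then applying the inclusion with the pair $(x,x')$ and then with $(x',x)$ yields both $\co(x,v) \sub \co(x',v)$ and $\co(x',v) \sub \co(x,v)$. So the plan is to prove the implication $F_{xv} \le F_{x'v} \Rightarrow \co(x,v) \sub \co(x',v)$, and I would argue by induction on $d(v,y)$ that every $y \in \co(x,v)$ lies in $\co(x',v)$, that is, that $v \in \bw(x',y)$. The case $d(v,y) = 0$ is immediate, since $v \in \bw(x',v)$ always.

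For the inductive step, suppose $d(v,y) \ge 1$ and $y \in \co(x,v)$. Using that $X$ is discretely geodesic, I would pick a discrete geodesic from $v$ to $y$ and let $y'$ be its point at distance $d(v,y) - 1$ from $v$, so that $d(y',y) = 1$. A short triangle-inequality computation, starting from $d(x,v) + d(v,y) = d(x,y)$, shows that $y' \in \co(x,v)$ as well, with $d(v,y') = d(v,y) - 1$; the induction hypothesis then gives $y' \in \co(x',v)$, i.e.\ $v \in \bw(x',y')$. Now $\beta$-stability applies to the pair $y',y$ at distance $1$: since $v \in \bw(x',y')$ and $d_\H(\bw(x',y'),\bw(x',y)) \le \beta$, there is a point $w \in \bw(x',y)$ with $d(v,w) \le \beta$. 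In particular $w \in B(v,\beta)$, so $F_{xv}(w) \le F_{x'v}(w)$ by hypothesis.

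It remains to deduce $v \in \bw(x',y)$, for which it suffices to check $d(x',v) + d(v,y) \le d(x',y)$ (the opposite bound $d(x',y) \le d(x',v) + d(v,y)$ being automatic). Since $w \in \bw(x',y)$ we have $d(x',y) = d(x',w) + d(w,y)$, so the desired inequality rearranges to $d(v,y) - d(w,y) \le d(x',w) - d(x',v) = F_{x'v}(w)$. Because $F_{x'v}(w) \ge F_{xv}(w) = d(x,w) - d(x,v)$, it is enough to verify $d(x,v) + d(v,y) \le d(x,w) + d(w,y)$; but the left side equals $d(x,y)$ as $y \in \co(x,v)$, while the right side is at least $d(x,y)$ by the triangle inequality, so this holds. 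This closes the induction.

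The heart of the argument, and the step I expect to be the main obstacle to get exactly right, is the interplay in the last paragraph: $\beta$-stability is invoked only to manufacture a witness $w$ for $\bw(x',y)$ that lies inside the ball $B(v,\beta)$ on which the cone types $F_{xv}$ and $F_{x'v}$ are compared, and then the hypothesis $y \in \co(x,v)$ (equivalently $d(x,v)+d(v,y) = d(x,y)$) is precisely what collapses the resulting chain of triangle inequalities into an equality. The induction is forced by the fact that stability only compares intervals with adjacent endpoints, so one must reach $y$ one step at a time through points already known to lie in $\co(x',v)$.
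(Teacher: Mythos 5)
Your proof is correct and follows essentially the same route as the paper's: induction on $d(v,y)$, producing the intermediate point $y'\in\bw(v,y)$ at distance one from $y$, invoking $\beta$-stability to obtain a witness $w\in\bw(x',y)\cap B(v,\beta)$, and then comparing $F_{xv}(w)\le F_{x'v}(w)$ to collapse the triangle inequalities. The only difference is cosmetic: the paper adds $d(u,y)-d(v,y)$ to the inequality $F_{x'v}(u)\ge F_{xv}(u)$ directly, whereas you unwind the same computation as a chain of ``it suffices to show'' reductions.
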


In particular, if for a fixed vertex $v$ the closed 
ball $B(v,\beta)$ is finite, then there are only finitely many distinct 
such functions $F_{xv}$ as $x$ ranges over $X$ and so $|\cC(v)| < \infty$.

\begin{proof}
Suppose that $F_{xv} \le F_{x'v}$. We show by induction on $l \ge 0$ 
that every $y \in \co(x,v)$ with $d(v,y) = l$ is an element of $\co(x',v)$.
The case $l = 0$ is trivial, so let $y \in \co(x,v)$ with 
$d(v,y) = l \ge 1$. Choose a point $y' \in \bw(v,y)$ such that 
$d(v,y') = l - 1$; note that $y' \in \co(x,v)$. 
By the induction hypothesis, $y' \in \co(x',v)$ and thus $v \in \bw(x',y')$.
Since $d(y',y) = 1$ and $X$ has $\beta$-stable intervals, there exists 
a point $u \in \bw(x',y) \cap B(v,\beta)$. We have
\[
d_{x'}(u) - d_{x'}(v) = F_{x'v}(u) \ge F_{xv}(u) = d_x(u) - d_x(v).
\]
Adding the term $d(u,y) - d(v,y)$ and using the identities
$d_{x'}(u) + d(u,y) = d_{x'}(y)$ and $d_x(v) + d(v,y) = d_x(y)$ we obtain
\[
d_{x'}(y) - d_{x'}(v) - d(v,y) \ge d_x(u) + d(u,y) - d_x(y) \ge 0. 
\]
Thus $d_{x'}(y) = d_{x'}(v) + d(v,y)$ and so $y \in \co(x',v)$.
\end{proof}

Lemma~\ref{Lem:cone-type} shows that if a finitely generated group 
$\Gam_S = (\Gam,d_S)$ with the word metric has $\beta$-stable intervals,
then $|\cC(v)|$ is finite for every $v \in \Gam_S$, and this number is 
of course independent of $v$. The cones $\co(x,1)$ based at the identity
element of $\Gam$ will be called {\em cone types}. For groups with finitely 
many cone types the language of all geodesic words is regular and 
the growth series is a rational function (see~\cite{Can, Eps+}).

\begin{Rem} \label{Rem:fft-property}
Neumann--Shapiro~\cite{NeuS} introduced a similar criterion, 
the {\em falsification by fellow traveller} (FFT) property, 
which is easily seen to imply uniform stability of intervals.
In particular it follows from Proposition~4.4 and Theorem~4.3 in their paper
that all finitely generated abelian groups have $\beta$-stable intervals and 
that finitely generated virtually abelian groups as well as 
geometrically finite hyperbolic groups have $\beta$-stable intervals 
for {\em some} word metrics. 
The FFT~property has been verified for further classes of groups, with 
respect to suitably chosen finite generating sets, in~\cite{Nos,Nos2,Hol}.
\end{Rem}

We also remark that the uniform stability of intervals is not 
a necessary condition for a finitely generated group $\Gam_S$ 
to have finitely many cone types:

\begin{Expl}
The finitely presented group $\Gam = \la a,t \mid t^2 = 1,\,atat = tata \ra$
with generating set $S = \{a,t\}$ has finitely many
cone types, but intervals are not uniformly stable.
This example is discussed in~\cite{Eld}. 
\end{Expl}

The next result states another consequence of the stability 
assumption. For a pair of points $x,y$ in a metric space $X$,  
\begin{equation} \label{eq:Gromov-prd}
(x \mid y)_z := \frac12 \bigl( d_z(x) + d_z(y) - d(x,y) \bigr)
\end{equation}
denotes their Gromov product with respect to $z \in X$.
Note that $0 \le (x\mid y)_z \le \min\{d_z(x), d_z(y)\}$ and 
$(x\mid y)_z + (x \mid z)_y = d(y,z)$. 

\begin{Lem} \label{Lem:z-beta}
Let $X$ be a discretely geodesic metric space with $\beta$-stable intervals.
Whenever $x,y,z \in X$, there exists a point $v \in \bw(x,y)$ 
with $d_z(v) \le \beta \cdot 2(x \mid y)_z$.
\end{Lem}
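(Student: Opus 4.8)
The plan is to walk along a discrete geodesic from $z$ to $y$ and transport a point of the interval $\bw(x,\cdot)$ step by step using $\beta$-stability, while keeping track of a suitable budget. I would fix a discrete geodesic $z = y_0, y_1, \dots, y_n = y$ with $n = d(z,y)$ and $d(y_{i-1},y_i) = 1$, which exists since $X$ is discretely geodesic. As $y_i$ lies on this geodesic, $d_z(y_i) = i$, so the Gromov products along the walk are governed by
\[
a_i := 2(x \mid y_i)_z = d_z(x) + i - d(x,y_i).
\]
Since the metric is integer valued and $d(y_{i-1},y_i) = 1$, the difference $d(x,y_i) - d(x,y_{i-1})$ lies in $\{-1,0,1\}$, so $a_i - a_{i-1} \in \{0,1,2\}$; moreover $a_0 = 0$ and $a_n = 2(x \mid y)_z$. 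The sought point will emerge from an induction producing, for every $i$, a point $v_i \in \bw(x,y_i)$ with $d_z(v_i) \le \beta a_i$; then $v := v_n$ gives the claim.

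For the base case I would set $v_0 := z$, which lies in $\bw(x,z)$ and satisfies $d_z(v_0) = 0 = \beta a_0$. For the inductive step, assume $v_{i-1} \in \bw(x,y_{i-1})$ with $d_z(v_{i-1}) \le \beta a_{i-1}$ is given; the decisive dichotomy is whether $a_i = a_{i-1}$ or $a_i > a_{i-1}$. If $a_i = a_{i-1}$, then $d(x,y_i) = d(x,y_{i-1}) + 1$, so $y_{i-1} \in \bw(x,y_i)$; one checks directly from the triangle inequality that in this case $\bw(x,y_{i-1}) \sub \bw(x,y_i)$, so I may keep $v_i := v_{i-1}$ at no cost, and $d_z(v_i) \le \beta a_{i-1} = \beta a_i$. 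If instead $a_i > a_{i-1}$, then $a_i - a_{i-1} \ge 1$, and since $d(y_{i-1},y_i) = 1$ the $\beta$-stability of intervals furnishes some $v_i \in \bw(x,y_i)$ with $d(v_{i-1},v_i) \le \beta$; hence $d_z(v_i) \le d_z(v_{i-1}) + \beta \le \beta a_{i-1} + \beta \le \beta a_i$.

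The crux of the argument is the nesting $\bw(x,y_{i-1}) \sub \bw(x,y_i)$ in the case $a_i = a_{i-1}$: it guarantees that the error term $\beta$ from stability is paid only at the steps where the budget $a_i$ strictly increases, and there it increases by at least $1$, so the accumulated error never exceeds $\beta a_n = \beta \cdot 2(x \mid y)_z$. Without this observation, a naive transport over all $n$ steps would only yield the much weaker bound $\beta n = \beta\, d_z(y)$. I expect no serious obstacle beyond this bookkeeping; the only points requiring care are the verification that $a_i - a_{i-1} \in \{0,1,2\}$ and that $a_i = a_{i-1}$ forces $d(x,y_i) = d(x,y_{i-1}) + 1$ and hence the interval inclusion, both of which are immediate from the definitions and the triangle inequality.
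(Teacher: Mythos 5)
Your proof is correct. It takes a recognisably different route from the paper's: there the induction is on the integer $2(x\mid y)_z$ and the \emph{reference point} is relocated --- one picks a discrete geodesic from the current base point to $y$, advances to the first vertex $y'$ at which the base point leaves $\bw(x,\cdot)$, uses stability to produce $z'\in \bw(x,y')$ with $d_z(z')\le\beta$, checks $2(x\mid y)_{z'}\le 2(x\mid y)_z-1$, and recurses with $z'$ in place of $z$. You instead keep $z$ fixed, make a single forward pass along one geodesic from $z$ to $y$, and transport a point of $\bw(x,y_i)$, paying $\beta$ only at the steps where $a_i$ strictly increases. Both arguments rest on the same observation --- the stability cost is incurred only when the doubled Gromov product goes up, and it goes up at most $2(x\mid y)_z$ times in total --- but your version replaces the paper's change of base point (and the attendant re-choice of geodesics at each stage) by the interval nesting $\bw(x,y_{i-1})\sub\bw(x,y_i)$ in the free steps, which you verify correctly and which holds by the triangle inequality exactly when $d(x,y_i)=d(x,y_{i-1})+1$. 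The bookkeeping is sound: monotonicity $a_i\ge a_{i-1}$ (from the triangle inequality) together with integrality gives the dichotomy you use, $a_0=0$, $a_n=2(x\mid y)_z$, and in the paid steps only $a_i-a_{i-1}\ge 1$ is needed. I see no gap.
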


\begin{proof}
We proceed by induction on the integer $2(x \mid y)_z$.
If $(x \mid y)_z = 0$, then $z \in \bw(x,y)$ and we can take $v = z$.
Now suppose that $(x \mid y)_z > 0$.
Choose a discrete geodesic $\gam \colon \{0,1,\dots,d_z(y)\} \to X$ from 
$z$ to $y$, and let $k$ be the largest parameter value such that 
$z \in \bw(x,\gam(k))$. Note that $k < d_z(y)$ because $(x \mid y)_z > 0$. 
Let $y' := \gam(k+1)$. Since $X$ has $\beta$-stable intervals,
there exists a point $z' \in \bw(x,y')$ with $d_z(z') \le \beta$. 
We have
\[
d_{z'}(x) + d_{z'}(y') = d(x,y') \le d_z(x) + d_z(y') - 1
\]
by the choice of $k$. Adding the term $d(y',y) - d(x,y)$ 
we obtain $2(x\mid y)_{z'} \le 2(x \mid y)_z - 1$.
Hence, by the induction hypothesis, there exists $v \in \bw(x,y)$
such that $d_{z'}(v) \le \beta \cdot 2(x \mid y)_{z'}$. So
\[
d_z(v) \le d_z(z') + d_{z'}(v) \le \beta \bigl( 1 + 2(x \mid y)_{z'} \bigr) 
\le \beta \cdot 2(x \mid y)_z,
\]
as desired.
\end{proof}

We conclude this section with a partial generalization
of Proposition~\ref{Prop:cv-finite}.  
The above lemma will be used in combination with the following 
simple fact: if $f \in \D{X}$ and $\{x,y\} \in A(f)$, that is,
$f(x) + f(y) = d(x,y)$, then
\begin{equation} \label{eq:xy-fz}
(x \mid y)_z 
\le \frac12 \bigl( (f(x) + f(z)) + (f(y) + f(z)) - d(x,y) \bigr) = f(z)
\end{equation}
for every $z \in X$. For a subset $B$ of a metric space $X$ 
we denote by $\cC(B)$ the set of all pointed cones $(v,\co(x,v))$ with 
$v \in B$ and $x \in X$.

\begin{Prop} \label{Prop:loc-finite}
Let $X$ be a discretely geodesic metric space with $\beta$-stable 
intervals, and assume that all bounded subsets of $X$ are finite. 
Fix $z \in X$ and $\alpha > 0$, and let $B$ be the closed ball
$B(z,2\alpha\beta)$. Then $|\cC(B)| < \infty$, and 
\begin{enumerate}
\item[\rm (1)]
every $f \in \Ex{X}$ with $f(z) \le \alpha$ satisfies 
$\rk(A(f)) \le \frac12|\cC(B)|$;
\item[\rm (2)] 
for every $f \in \Ex{X}$ with $f(z) \le \alpha$ and 
$\rk(A(f)) = 0$ there are no more than $2^{|\cC(B)|}$ sets $A \in \cA(X)$ 
such that $A \sub A(f)$ and $\rk(A) = 1$.
\end{enumerate}
\end{Prop}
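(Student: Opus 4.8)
The plan is to transfer the argument of Proposition~\ref{Prop:cv-finite} from a single base point to the finite ball $B=B(z,2\alpha\beta)$, the passage between the two being supplied by Lemma~\ref{Lem:z-beta} together with~\eqref{eq:xy-fz}: if $f\in\Ex{X}$ with $f(z)\le\alpha$ and $\{x,y\}\in A(f)$, then $(x\mid y)_z\le f(z)\le\alpha$ by~\eqref{eq:xy-fz}, so Lemma~\ref{Lem:z-beta} furnishes a point $v\in\bw(x,y)$ with $d_z(v)\le\beta\cdot 2(x\mid y)_z\le 2\alpha\beta$, i.e.\ $v\in B$. Thus every edge of $A(f)$ has a ``median'' in $B$, and the pointed cones $(v,\co(x,v))$ and $(v,\co(y,v))$ lie in $\cC(B)$. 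First I would check $|\cC(B)|<\infty$: since bounded sets are finite, $B$ is finite, and for each $v\in B$ the ball $B(v,\beta)$ is finite while each $F_{xv}$ takes values in the finite set $\Z\cap[-\beta,\beta]$ on $B(v,\beta)$; hence by Lemma~\ref{Lem:cone-type} there are only finitely many cones $\co(x,v)$, and summing over the finitely many $v\in B$ gives $|\cC(B)|<\infty$.

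For~(1), let $f\in\Ex{X}$ with $f(z)\le\alpha$ and put $A:=A(f)$. To each $x\in X$ I would attach an edge $\{x,y_x\}\in A$ (which exists since $\bigcup A=X$) and, by the observation above, a median $v_x\in\bw(x,y_x)\cap B$, and set $\Phi(x):=(v_x,\co(x,v_x))\in\cC(B)$. The key point is that the fibres of $\Phi$ respect the bipartition: if $\Phi(x)=\Phi(\bar x)$, then $v_x=v_{\bar x}=:v$ and $\co(x,v)=\co(\bar x,v)$, so Lemma~\ref{Lem:xyxy}(4), applied with the common function $f$, yields $\{x,y_{\bar x}\},\{\bar x,y_x\}\in A$ and hence an $A$-path of length~$2$ from $x$ to $\bar x$. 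Consequently $x$ and $\bar x$ lie in the same $A$-component, and in the same class of its bipartition when that component is even. Therefore the images under $\Phi$ of the $2\rk(A)$ bipartition classes of the even $A$-components are pairwise disjoint and non-empty subsets of $\cC(B)$, whence $2\rk(A)\le|\cC(B)|$, which is~(1).

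For~(2) I would mimic the injective map constructed in the proof of Proposition~\ref{Prop:cv-finite}(2), now landing in the full power set of $\cC(B)$. Fix $f$ with $f(z)\le\alpha$ and $\rk(A(f))=0$, and let $\cA'$ be the set of $A\in\cA(X)$ with $A\sub A(f)$ and $\rk(A)=1$. Each such $A$ has a single even component, with bipartition $X_{-1}^A\cup X_1^A$, and a partition $X=X_0^A\cup X_1^A\cup X_{-1}^A$ so that every $g\in\Ex{X}$ with $A(g)=A$ satisfies $g=f+\sig c$ on $X_\sig^A$ with $c:=\|g-f\|_\infty>0$ (the sign being fixed by moving into the edge $P(A)$ from its endpoint $f$; see Theorem~\ref{Thm:pa}). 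I would define $S(A):=\{(v,\co(x,v))\in\cC(B):x\in X_{-1}^A,\ \{x,y\}\in A,\ y\in X_1^A,\ v\in\bw(x,y)\cap B\}$, using Lemma~\ref{Lem:z-beta} to guarantee $S(A)\ne\es$. Injectivity would follow from a reconstruction of $A$ from $S(A)$: first recover the decreasing side $X_{-1}^A$ from the pointed cones, then recover $X_1^A=\{y:\exists\,x\in X_{-1}^A,\ \{x,y\}\in A(f)\}$ (here, if $\{x,y\}\in A(f)$ and $x\in X_{-1}^A$, extremality of $g$ forces $g(y)\ge d(x,y)-g(x)=f(y)+c$, so $y\in X_1^A$), and finally $A$ as the set of edges of $A(f)$ joining $X_{-1}^A$ to $X_1^A$ or lying in $X_0^A$. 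This gives $|\cA'|\le 2^{|\cC(B)|}$.

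The main obstacle is exactly the first reconstruction step in~(2), namely that $S(A)$ determines $X_{-1}^A$. In Proposition~\ref{Prop:cv-finite}(2) this is immediate because a \emph{single} base point $v$ lies in the interval of \emph{every} edge of $A$, so the map $x\mapsto\co(x,v)$ separates the classes uniformly. Here the medians vary from edge to edge, and a pointed cone $(v,\co(x,v))\in S(A)$ only controls points whose $A(f)$-geodesics pass through that particular $v$; moreover Lemma~\ref{Lem:xyxy}(4) applied with $f$ produces $A(f)$-paths rather than $A$-paths, which a priori fails to exclude a point on the increasing side $X_1^A$ that happens to share a decreasing-side pointed cone. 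Closing this gap is where I expect the real work to be, and I would attack it through the rigidity coming from $\rk(A(f))=0$ (every $A(f)$-component carries an odd cycle, so $f$ is determined on it) together with the minimality of $f$ and of the edge-functions $g$, in order to show that any such coincidence of pointed cones forces the two points into the same class; should the one-sided encoding prove insufficient, I would enlarge $S(A)$ to record the cones of both endpoints of each even-component edge and recover the bipartition from the resulting graph on $\cC(B)$.
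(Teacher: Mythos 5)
Your treatment of $|\cC(B)|<\infty$ and of part~(1) is correct and is essentially the paper's argument: one assigns to each edge of $A(f)$ a pointed cone based at a median in $B$ (Lemma~\ref{Lem:z-beta} combined with~\eqref{eq:xy-fz}) and uses Lemma~\ref{Lem:xyxy}(4) to conclude that points lying in different classes of the partition induced by $A(f)$ receive different pointed cones.

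For part~(2), however, the step you flag as the ``main obstacle'' --- recovering $X_{-1}^A$ from $S(A)$ --- is indeed the heart of the proof, and you do not close it; as written, part~(2) is incomplete. The paper closes it as follows. Fix once and for all, for \emph{every ordered pair} $(x,y)$ with $\{x,y\}\in A(f)$, a median $v_{xy}\in \bw(x,y)\cap B$ and set $\widehat C(x,y):=(v_{xy},\co(x,v_{xy}))$; define $S(A):=\{\widehat C(x,y): (x,y)\in X_{-1}\times X_1,\ \{x,y\}\in A\}$. Then
\[
X_{-1}=\bigl\{x\in X:\ \widehat C(x,y)\in S(A)\ \text{for \emph{all} $y$ with $\{x,y\}\in A(f)$}\bigr\}.
\]
The inclusion $\sub$ is exactly the rigidity you gesture at: if $x\in X_{-1}$ and $\{x,y\}\in A(f)$, then every $g\in\Ex{X}$ with $A(g)=A$ satisfies $g(y)\ge d(x,y)-g(x)=f(y)+\|g-f\|_\infty$, so $y\in X_1$ \emph{and} $\{x,y\}\in A(g)=A$; hence every $A(f)$-edge at a point of $X_{-1}$ is automatically an $A$-edge into $X_1$, and its pointed cone lies in $S(A)$. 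For the reverse inclusion, given $\bar x$ satisfying the condition, choose $\bar y$ with $\{\bar x,\bar y\}\in A$ (possible since $\bigcup A=X$); then $\widehat C(\bar x,\bar y)=\widehat C(x,y)$ for some $(x,y)\in X_{-1}\times X_1$ with $\{x,y\}\in A$, and applying Lemma~\ref{Lem:xyxy}(4) to $g$ --- both edges lie in $A=A(g)$, not merely in $A(f)$ --- yields an $A$-path of length~$2$ from $x$ to $\bar x$, whence $\bar x\in X_{-1}$. This disposes of your worry about $A(f)$-paths versus $A$-paths. Note also that the characterization requires a fixed choice function $(x,y)\mapsto v_{xy}$, used both in the definition of $S(A)$ and in the membership test, so your set-valued definition of $S(A)$ with an unspecified $v\in\bw(x,y)\cap B$ should be replaced by such a choice. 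Once $X_{-1}$ is recovered, your reconstruction of $X_1$, $X_0$ and $A$ is correct and matches the paper.
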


\begin{proof}
By the assumptions on~$X$ and Lemma~\ref{Lem:cone-type}, $\cC(B)$ is finite.
Now let $f \in \Ex{X}$, and suppose that $f(z) \le \alpha$.
For every ordered pair $(x,y)$ with $\{x,y\} \in A(f)$,
we choose a point $v_{xy} \in \bw(x,y) \cap B$
by means of Lemma~\ref{Lem:z-beta} and~\eqref{eq:xy-fz}, then we put
\[
\widehat C(x,y) := \bigl(v_{xy},\co(x,v_{xy})\bigr) \in \cC(B).
\]
Consider the partition of $X$ induced by $A(f)$, 
as in~\eqref{eq:x-partition}.
Let $x,\bar x \in X$, and choose $y,\bar y$ such that 
$\{x,y\},\{\bar x,\bar y\} \in A(f)$. 
If $\widehat C(x,y) = \widehat C(\bar x,\bar y)$,  
Lemma~\ref{Lem:xyxy} shows that $x$ and $\bar x$ are connected by an 
$A(f)$-path of length~$2$. Hence, if $x$ and $\bar x$
lie in different sets of the partition, then 
$\widehat C(x,y) \ne \widehat C(\bar x,\bar y)$. 
It follows that $\rk(A(f)) \le \frac12|\cC(B)|$.

Now suppose in addition that $\rk(A(f)) = 0$, and 
define $\widehat C(x,y) \in \cC(B)$ as above, for every pair $(x,y)$ 
with $\{x,y\} \in A(f)$.
Let $\cA_f'$ denote the set of all $A \in \cA(X)$ 
such that $A \sub A(f)$ and $\rk(A) = 1$. 
We show that there is an injective map $S$ from $\cA_f'$ into the set 
of all (non-empty) subsets of $\cC(B)$. 
For every $A \in \cA_f'$ there is a unique partition
$X = X_0 \cup X_1 \cup X_{-1}$ such that every $g \in \Ex{X}$ with $A(g) = A$
satisfies 
\[
g(x) = f(x) + \sig \|g - f\|_\infty
\]
for $\sig \in \{0,1,-1\}$ and $x \in X_\sig$. Define
\[
S(A) := \bigl\{ \widehat C(x,y) : 
(x,y) \in X_{-1} \times X_1,\,\{x,y\} \in A \bigr\};
\]
since $A \sub A(f)$, $\widehat C(x,y)$ is defined for all $(x,y)$ 
with $\{x,y\} \in A$. We claim that 
\[
X_{-1} = \bigl\{ x \in X : 
\text{$\widehat C(x,y) \in S(A)$ for all $y \in X$ with $\{x,y\} \in A(f)$} 
\bigr\}.
\]
Let $x \in X_{-1}$. If $y \in X$ is such that $\{x,y\} \in A(f)$,
then every $g \in \Ex{X}$ with $A(g) = A$ satisfies
$g(y) \ge d(x,y) - g(x) = f(x) + f(y) - g(x) = 
f(y) + \|g - f\|_\infty$, so $y \in X_1$ and $\{x,y\} \in A(g) = A$; 
thus $\widehat C(x,y) \in S(A)$.  
Conversely, suppose that $\bar x \in X$, and  
$\widehat C(\bar x,\bar y) \in S(A)$ for all $\bar y \in X$ 
with $\{\bar x,\bar y\} \in A(f)$. Among all such points $\bar y$,
fix one with $\{\bar x,\bar y\} \in A$.
Since $\widehat C(\bar x,\bar y) \in S(A)$, there is a pair 
$(x,y) \in X_{-1} \times X_1$ such that 
$\{x,y\} \in A$ and $\widehat C(x,y) = \widehat C(\bar x,\bar y)$. 
By Lemma~\ref{Lem:xyxy}, $x$ and $\bar x$ 
are connected by an $A$-path of length~$2$, 
thus $\bar x \in X_{-1}$.
Hence, $X_{-1}$ is characterized in terms of $S(A)$ 
as claimed. Now one can proceed as in the proof of 
Proposition~\ref{Prop:cv-finite}, with $f$ in place of 
$d_v$, to show that $S$ is injective. This gives~(2).
\end{proof}


\section{Proofs of the main results} \label{Sect:proofs}

We now prove the results stated in the introduction 
and discuss some examples.

\begin{proof}[Proof of Theorem~\ref{Thm:intro-ex}]
Let $X$ be a discretely geodesic metric space with $\beta$-stable intervals, 
and suppose that all bounded subsets of $X$ are finite.
Given $f \in \E{X}$, there exists a point $z \in X$ where
$f$ attains its minimum. Fix any $\eps > 0$.
By the first part of Proposition~\ref{Prop:loc-finite} there exists a number 
$N$ such that $\rk(A(g)) \le N$ for all $g \in \Ex{X}$ with 
$g(z) \le f(z) + \eps$, in particular for all $g \in \Ex{X}$ with
$\|f - g\| < \eps$. Now Theorem~\ref{Thm:poly} shows that
$\Ex{X} = \E{X}$ and that $\cP = \{P(A)\}_{A \in \cA(X)}$ is a polyhedral 
structure on $\E{X}$ with locally finite dimension and with only finitely 
many isometry types of $n$-cells for every $n$. 
By Theorem~\ref{Thm:pa} every $n$-cell $P(A)$ is isometric to an 
injective polytope in $l_\infty^n$. To show that $\cP$ is in fact locally 
finite, let $f \in \E{X} = \Ex{X}$ be a vertex of $\cP$; that is,
$\rk(A(f)) = 0$. Let again $z$ be a point where $f$ attains the minimum,
and put $\alpha := f(z)$. By the second part of 
Proposition~\ref{Prop:loc-finite} there is a number $M$ such that
there are at most $M$ admissible sets $A \sub A(f)$ with $\rk(A) = 1$;
in other words, there are at most $M$ edges in $\cP$ issuing from the vertex
$f$. Thus $\cP$ is locally finite, and $\E{X}$ is locally compact.
Consequently, as a complete geodesic metric space, $\E{X}$ is proper.
\end{proof}

The following simple example shows that, with the assumptions of 
Theorem~\ref{Thm:intro-ex}, the injective hull may be 
infinite dimensional.

\begin{Expl} \label{Expl:infinite}
For every integer $n \ge 1$, $W_n := \{0,1\}^n$ with the
$l_1$ distance (the vertex set of the $n$-cube graph) has 
$1$-stable intervals, and the injective hull $\E{W_n}$ has dimension
$2^{n-1}$ (compare Example~\ref{Expl:zn}; see also~\cite[Section~5]{GooM}
for more precise information in the case $n = 3$). Now let $X$ be the 
space obtained from the disjoint union $\bigcup_{n = 1}^\infty W_n$ 
by identifying $(1,1,\dots,1) \in W_n$ with 
$(0,0,\dots,0) \in W_{n+1}$ for $n = 1,2,\dots$, equipped with the obvious 
discretely geodesic metric, so that $X$ contains an isometric copy of each 
$W_n$. Clearly $X$ has $1$-stable intervals, bounded subsets of $X$ are 
finite, and $\E{X}$ is infinite dimensional.
\end{Expl}

Next we establish Proposition~\ref{Prop:intro-fix}. 
Given a metric space $X$ and a group $\Lam$ of isometries of $X$, 
we write $\Lam x := \{L(x) : L \in \Lam\}$ for the orbit of $x$
and $\Lam\!\setminus\!X$ for the set of orbits; furthermore
$\Fix(\Lam) := \{x \in X : \Lam x = \{x\}\}$ denotes the fixed
point set.

\begin{proof}[Proof of Proposition~\ref{Prop:intro-fix}]
First we show that if $X$ is a metric space and $\Lam$ is a subgroup 
of the isometry group of $X$ with bounded orbits,
there exists an extremal function $f \in \E{X}$ that is constant on each 
orbit. For $\Lam x,\Lam y \in \Lam\!\setminus\!X$, define
\[
D(\Lam x,\Lam y) := \sup\{d(x',y') : x' \in \Lam x,\,y' \in \Lam y \}.
\]
Note that this is finite since the orbits are bounded, and $D$ has all the 
properties of a metric except that $D(\Lam x,\Lam x) = \diam(\Lam x) > 0$ 
if $x \not\in \Fix(\Lam)$. 
Denote by $\D{\Lam\!\setminus\!X,D}$ the set of all functions 
$G \colon \Lam\!\setminus\!X \to \R$ such that 
\[
G(\Lam x) + G(\Lam y) \ge D(\Lam x,\Lam y)
\] 
for all $\Lam x,\Lam y \in \Lam\!\setminus\!X$. For $z \in X$, 
the function defined by $G_z(\Lam x) := D(\Lam x,\Lam z)$ 
belongs to $\D{\Lam\!\setminus\!X,D}$, due to the triangle inequality for $D$. 
By Zorn's Lemma, the partially ordered set $(\D{\Lam\!\setminus\!X,D},\le)$ 
has a minimal element $F$. 
Consider the respective function $f \colon X \to \R$,
$f(x) := F(\Lam x)$. For all $x,y \in X$,
\[
f(x) + f(y) = F(\Lam x) + F(\Lam y) \ge D(\Lam x,\Lam y) \ge d(x,y),
\]
so $f \in \D{X}$. Furthermore, by the minimality of $F$, for every $x \in X$ 
and $\eps > 0$ there is a point $y \in X$ such that
$F(\Lam x) + F(\Lam y) \le D(\Lam x,\Lam y) + \eps$ and 
$D(\Lam x,\Lam y) \le d(x,y) + \eps$, 
hence $f(x) + f(y) \le d(x,y) + 2\eps$.
This shows that in fact $f \in \E{X}$.

Now suppose that $X$ is injective. Then the only extremal functions 
on $X$ are distance functions, so by the above result
there exists a point $z \in X$ such that $d_z$ is constant on each orbit 
of $\Lam$. Thus $\Lam z = \{z\}$ and so $z \in \Fix(\Lam)$.
We prove that $\Fix(\Lam)$ is hyperconvex 
(recall Proposition~\ref{Prop:hyperconvex}). 
Since $\Fix(\Lam) \ne \es$, it suffices to show that 
if $((x_i,r_i))_{i \in I}$ is a non-empty family in $X \times \R$
such that $x_i \in \Fix(\Lam)$ and $r_i + r_j \ge d(x_i,x_j)$ for all pairs of 
indices $i,j \in I$, then $Y := \bigcap_{i \in I}B(x_i,r_i)$ has non-empty 
intersection with $\Fix(\Lam)$.
Note that $Y$ is bounded and hyperconvex, in particular $Y \ne \es$.
For all $i \in I$, $L \in \Lam$, and $y \in Y$, we have 
\[
d(x_i,L(y)) = d(L(x_i),L(y)) = d(x_i,y) \le r_i,
\] 
thus $L(Y) \sub Y$. 
In other words, for every $L \in \Lam$, the restriction $L|_Y$ is an 
isometric embedding of $Y$ into itself. In fact, since also $L^{-1}(Y) \sub Y$,
$L|_Y$ is an isometry of $Y$. Since $Y$ is bounded and injective, 
the group $\{L|_Y : L \in \Lam\}$ must have a fixed point, 
as we already know, so $Y \cap \Fix(\Lam) \ne \es$.
\end{proof} 

We proceed to $\del$-hyperbolic metric spaces, as defined in~\eqref{eq:hyp}.

\begin{proof}[Proof of Proposition~\ref{Prop:intro-hyp}]
To show that $\E{X}$ is $\del$-hyperbolic, let $e,f,g,h \in \E{X}$, 
and let $\eps > 0$. There exist $w,x \in X$ such
that either $\|e - f\|_\infty \le e(x) - f(x) + \eps$ and 
$e(x) \le d(w,x) - e(w) + \eps$, or $\|e - f\|_\infty \le f(w) - e(w) + \eps$
and $f(w) \le d(w,x) - f(x) + \eps$. Thus, in either case,
\[
\|e - f\|_\infty \le d(w,x) - e(w) - f(x) + 2\eps.
\]
Likewise, $\|g - h\|_\infty \le d(y,z) - g(y) - h(z) + 2\eps$ for some
$y,z \in X$. Put $\Sig := e(w) + f(x) + g(y) + h(z)$.
Using the $\del$-hyperbolicity of $X$ we obtain
\begin{align*}
&\|e - f\|_\infty + \|g - h\|_\infty \le d(w,x) + d(y,z) - \Sig + 4\eps \\
&\qquad\le \max\{d(w,y) + d(x,z),d(x,y) + d(w,z)\} - \Sig + \del + 4\eps.
\end{align*}
Now $d(w,y) + d(x,z) - \Sig \le e(y) + f(z) - g(y) - h(z) \le \|e-g\|_\infty + 
\|f-h\|_\infty$ and
$d(x,y) + d(w,z) - \Sig \le -e(w) - f(x) + g(x) + h(w) \le \|f-g\|_\infty + 
\|e-h\|_\infty$. Since $\eps > 0$ was arbitrary, this gives the desired 
inequality for $e,f,g,h$. 

Suppose, in addition, that $X$ is geodesic or discretely geodesic.
Put $\nu := 0$ in the former and $\nu := \frac12$ in the latter case.
Let $f \in \E{X}$. For $\eps > 0$, choose $x,y \in X$ such that 
$f(x) + f(y) \le d(x,y) + \eps$. Since $f(x) + f(y) \ge d(x,y)$,
there is a point $v \in \bw(x,y)$ such that $d(v,x) \le f(x) + \nu$ 
and $d(v,y) \le f(y) + \nu$.
Using the $\del$-hyperbolicity of the quadruple 
$\{f,d_v,d_x,d_y\} \sub \E{X}$, together with~\eqref{eq:fdf}, we get
\begin{align*}
f(v) + d(x,y) 
&\le \max\{ f(x) + d(v,y), f(y) + d(v,x) \} + \del \\
&\le f(x) + f(y) + \del + \nu \\
&\le d(x,y) + \eps + \del + \nu,
\end{align*}
thus $f(v) \le \eps + \del + \nu$. Hence, for every $\eps > 0$ there exists 
$v \in X$ such that $\|f - d_v\|_\infty = f(v) \le \eps + \del + \nu$.
\end{proof}

Now let $\Gam_S = (\Gam,d_S)$ be a finitely generated group with the word 
metric. By Proposition~\ref{Prop:isometries}, the isometric action 
$(x,y) \mapsto L_x(y) := xy$ of $\Gam$ on $\Gam_S$ induces an 
isometric action 
\[
(x,f) \mapsto \bar{L}_x(f) = f \circ L_x^{-1}
\] 
of $\Gam$ on the injective hull $\E{\Gam_S}$. Suppose that $\Gam_S$ has 
$\beta$-stable intervals. Then we already know 
that $\E{\Gam_S}$ is proper and that $\cP = \{P(A)\}_{A \in \cA(\Gam_S)}$ 
is a locally finite polyhedral structure on $\E{\Gam_S}$ with finitely 
many isometry types of $n$-cells for every $n$.

Before we proceed with the proof of Theorem~\ref{Thm:intro-groups},
we recall the construction of the first barycentric subdivision
of $\E{\Gam_S}$.
The barycenter $b$ of a finite family $(v_i)_{i=1}^k$
of points in a vector space $Z$ is defined as $b := \frac1k\sum_{i=1}^k v_i$
or, equivalently, as the unique point $b$ such that $\sum_{i=1}^k(v_i-b) = 0$.
If $L \colon Z \to Z$ is an affine map, the barycenter of 
$(L(v_i))_{i=1}^k$ equals $L(b)$. If, in addition, $L(v_i) = v_{\sig(i)}$
for some permutation $\sig$ of $\{1,\dots,k\}$, then $L(b) = b$.
Now, for every cell $P(A)$ of $\cP$, 
the barycenter $b(A)$ of $P(A)$ is defined as the barycenter in $\R^\Gam$ of 
the vertex set of $P(A)$. Clearly $b(A)$ is a point in the interior of 
$P(A)$ relative to the affine hull $H(A)$ of $P(A)$. Every isometry $L$ of 
$P(A)$ is the restriction of an affine transformation of $H(A)$ that permutes 
the vertices of $P(A)$, so $L(b(A)) = b(A)$. The first barycentric 
subdivision $\cP^1$ of $\cP$ is the collection of all simplices 
$[b(A_0),b(A_1),\dots,b(A_j)]$ corresponding to strictly ascending 
sequences $P(A_0) \sub P(A_1) \sub \ldots \sub P(A_j)$ of cells in $\cP$. 
We write $\E{\Gam_S}^1$ for the metric space $\E{\Gam_S}$ 
equipped with the simplicial structure $\cP^1$.

\begin{proof}[Proof of Theorem~\ref{Thm:intro-groups}]
First we show that for every bounded set $B \sub \E{\Gam_S}$ 
there are only finitely many $x \in \Gam$ such that 
$\bar{L}_x(B) \cap B \ne \es$. Let $R > 0$ be such that 
$\|f - d_1\|_\infty \le R$ for all $f \in B$,
where $d_1$ is the distance function to $1 \in \Gam$.
We have $\bar{L}_x(d_1) = d_x$. Hence, if $f \in \bar{L}_x(B) \cap B$,
then also $\|f - d_x\|_\infty \le R$ and so 
$d_S(1,x) = \|d_1 - d_x\|_\infty \le 2R$. 
This gives the result. As this holds for compact sets $B$, 
the action of $\Gam$ on $\E{\Gam_S}$ is proper (recall also that 
$\E{\Gam_S}$ is itself proper).
For $x \in \Gam$ and $f,g \in \E{\Gam_S}$, 
it follows from the left-invariance of $d_S$ that $A(f) = A(g)$ if and only if 
$A(f \circ L_x^{-1}) = A(g \circ L_x^{-1})$, which is in turn equivalent to 
$A(\bar{L}_x(f)) = A(\bar{L}_x(g))$. So $\bar L_x$ maps cells in $\cP$
onto cells.

If we pass to the first barycentric subdivision of $\E{\Gam_S}$, the 
group $\Gam$ still acts by cellular---now simplicial---isometries 
on $\E{\Gam_S}^1$ via $x \mapsto \bar L_x$. In addition, if $\bar L_x$ maps 
a simplex in $\cP^1$ to itself, then $\bar L_x$ fixes the simplex pointwise.
Thus $\E{\Gam_S}^1$ is a $\Gam$-CW-complex. For every finite 
subgroup $\Lam$ of $\Gam$, the fixed point subcomplex $\Fix(\Lam)$ 
is contractible by Proposition~\ref{Prop:intro-fix}. Since $\Gam$ acts 
properly, this shows that $\E{\Gam_S}^1$ is a model for the classifying space
$\underbar{\rm E}\Gam$ for proper $\Gam$-actions (see~\cite[Section~1]{Lue}).

Now suppose that $\Gam_S$ is $\del$-hyperbolic.
It follows from Proposition~\ref{Prop:intro-hyp} that for every 
$f \in \E{\Gam_S}$ there is a point $z \in \Gam_S$ with 
$\|f - d_z\| \le \del + \frac12$. Hence
the closed ball in $\E{\Gam_S}$ with center $d_1$ and radius 
$\del + \frac12$ is a compact set whose $\Gam$-orbit covers $\E{\Gam_S}$.
As in the second half of the proof of Proposition~\ref{Prop:intro-hyp} we see
that whenever $f \in \E{\Gam_S}$ and $\{x,y\} \in A(f)$,
there exists a point $v_{xy} \in \bw(x,y)$ with 
$f(v_{xy}) \le \del + \frac12$. If $\{x',y'\}$ is another element of $A(f)$,
then $d(v_{xy},v_{x'y'}) \le f(v_{xy}) + f(v_{x'y'}) \le 2\del + 1$.  
The argument of the first part of Proposition~\ref{Prop:loc-finite}
then shows that
\begin{equation} \label{eq:dim-bound}
\rk(A(f)) \le \frac12 \cdot
\max\{|B| : B \sub \Gam_S,\,\diam(B) \le 2\del + 1\}
\cdot |\cC(1)|
\end{equation}
for all $f \in \E{\Gam_S}$,
where $|\cC(1)|$ is the number of cone types of $(\Gam,S)$.
So the dimension of $\E{\Gam_S}$ is bounded by the right side 
of~\eqref{eq:dim-bound} too.
\end{proof}

In order for the injective hull $\E{\Gam_S}$
to lie within finite distance of $\e(\Gam_S)$, $\Gam_S$ need not be 
word hyperbolic, as is shown by $\Z^2$ (compare Example~\ref{Expl:zn}). 
A necessary condition is given next.

\begin{Rem}
Let $\Gam_S = (\Gam,d_S)$ be a finitely generated group with the word
metric, and suppose that there is a constant $D$ such that for every 
$f \in \E{\Gam_S}$ there is an element $z \in \Gam_S$ with 
$\|f - d_z\|_\infty = f(z) \le D$. Note that $\Gam$ acts coboundedly 
on $\E{\Gam_S}$. It follows from Proposition~\ref{Prop:bicombing} that
there is map $\sig \colon \Gam_S \times \Gam_S \times [0,1] \to \Gam_S$ 
with the following properties: for every pair $(x,y) \in \Gam_S \times \Gam_S$,
the map $\sig_{xy} := \sig(x,y,\cdot)$ satisfies $\sig_{xy}(0) = x$, 
$\sig_{xy}(1) = y$, and $|d_S(\sig_{xy}(s),\sig_{xy}(t)) - (t-s)d_S(x,y)| \le 2D$ 
for $0 \le s \le t \le 1$; furthermore, 
\[
d_S(\sig_{xy}(t),\sig_{x'y'}(t)) \le (1-t)d(x,x') + td(y,y') + 2D
\]
and $z \cdot \sig_{xy}(t) = \sig_{zx,zy}(t)$ 
for $x,y,x',y' \in \Gam_S$, $t \in [0,1]$, and $z \in \Gam$.
In particular, $\Gam_S$ is semihyperbolic in the sense of 
Alonso--Bridson~\cite{AloB}. On the other hand, $\Z^n$ for $n \ge 3$ 
is an example of a semihyperbolic group that does not act coboundedly 
on its injective hull.
\end{Rem}

For a finitely generated group $\Gam_S$ with $\beta$-stable intervals that 
is not word hyperbolic, Theorem~\ref{Thm:intro-groups} leaves open 
the possibility of $\E{\Gam_S}$ being infinite dimensional. 
An example of this type is missing at present. 
However, there are simple instances of finitely presented groups 
(without uniformly stable intervals) whose injective hull 
fails to be finite dimensional or locally compact. 
We also note that groups with $\beta$-stable intervals are easily 
seen to be almost convex in the sense of Cannon~\cite{Can2} 
and hence finitely presented.

\begin{Expl}
Let $\Gam_S$ be the Baumslag--Solitar group 
$\la x,y \mid yx = x^2y \ra$ with
generating set $S = \{x,y\}$.
Fix an integer $n \ge 1$. The word
$w_n := u_nxu_n^{-1}x^{-1}$, where $u_n := y^nx^2y^{-n}$,
represents the identity. Let $\gam \colon \{0,1,\dots,l\}
\to \Gam_S$ be the corresponding discrete loop of length $l := 4n + 6$.
This is similar to the loop depicted 
in~\cite[Figure~7.8]{Eps+} (where $u_n$ is chosen to be $y^nxy^{-n}$).
By inspecting this picture, one sees that for $k = 0,\dots,n$,
the two points $\gam(k) = y^k$ and $\gam(\frac12 l + k) = u_nxy^k =
x^{2^{n+1} + 1}y^k$ are at distance $\frac12 l$ from each other.
It follows that the constant function $f = \frac14 l$
on $Y := \bigcup_{k=0}^n\{\gam(k),\gam(\frac12 l + k)\}$
is an element of $\E{Y}$ with $\rk(A(f)) = n + 1$.
As $n \ge 1$ was arbitrary, $\E{\Gam_S}$ cannot be finite dimensional.
\end{Expl}

The following example shows that for a finitely presented group $\Gam_S$
with infinitely many cone types, $\E{\Gam_S}$ need not be locally finite
near points in $\e(\Gam_S)$. This contrasts with the second assertion of 
Proposition~\ref{Prop:cv-finite}. 

\begin{Expl} \label{Expl:not-loc-cpt}
Consider the group $\Gam = \la a,b,t \mid ab = ba,\, t^2=1,\, tab = abt \ra$
with generating set $S = \{a,b,t\}$.
For every integer $m \ge 1$, put $x_m := a^{-m}t$ and $y_m := b^mt$.
Note that $d_S(1,x_m) = d_S(1,y_m) = m + 1$, 
\[
d_S(x_m,y_m) = d_S(1,ta^mb^mt) = d_S(1,t(ab)^mt) = 2m
\] 
for all $m \ge 1$, and $d_S(x_m,y_n) = m + n + 2$ if $n \ne m$.
Hence, $\co(x_m,1)$ contains $\{y_n : n \ne m\}$ but 
not $y_m$, so $\Gam_S$ has infinitely many cone types.
Now let $f_m \in \E{\Gam_S}$ be a median point of the triple
of distance functions $d_1,d_{x_m},d_{y_m}$. Then $\|f_m - d_1\|_\infty = 1$
and $\|f_m - d_{x_m}\|_\infty = \|f_m - d_{y_m}\|_\infty = m$, 
and it follows from the triangle inequality that $\|f_n - f_m\|_\infty = 2$
whenever $n \ne m$. Hence, there is an isometrically embedded simplicial 
tree with infinite valence at the vertex $d_1$, which therefore has 
no compact neighborhood in $\E{\Gam_S}$.
Nevertheless, I suspect $\E{\Gam_S}$ to be a polyhedral complex of
finite dimension (equal to~3).
\end{Expl}

\bigskip
{\bf Acknowledgements.} 
I thank Mario Bonk, Arvin Moezzi, Pierre Pansu, and Viktor Schroeder 
for inspiring discussions and valuable comments. 
Parts of this paper were written 
during visits to the Institut Henri Poincar\'{e} in Paris, the Max Planck
Institute for Mathematics in Bonn, and the University of Seville. 
I gratefully acknowledge support from these institutions and from the Swiss
National Science Foundation.


\addcontentsline{toc}{section}{References}


\end{document}